\newtheorem{thm}{Theorem}
\newtheorem{lem}{Lemma}[section]
\newtheorem{cor}{Corollary}
\newtheorem{Prop}{Proposition}[section]
\newtheorem*{theorem*}{Theorem}
\newtheorem*{corollary*}{Corollary}
\theoremstyle{definition}
\newtheorem{oss}{Remark}[section]
\newcommand{\R}{\mathbb{R}}
\newcommand{\N}{\mathbb{N}}
\def\eps{\mathop{\varepsilon}}
\DeclareMathAlphabet{\mathscr}{OT1}{pzc}{m}{it}
\begin{document} 
 
\title{\textbf{Extensions of Schoen--Simon--Yau and Schoen--Simon theorems via iteration \`{a} la De Giorgi}} 
\author{Costante Bellettini}
\date{}

\maketitle

\begin{abstract}
We give an alternative proof of the Schoen--Simon--Yau curvature estimates and associated Bernstein-type theorems (1975), and extend the original result by including the case of $6$-dimensional (stable minimal) immersions.
The key step is an $\eps$-regularity theorem, that assumes smallness of the scale-invariant $L^2$ norm of the second fundamental form. 

Further, we obtain a graph description, in the Lipschitz multi-valued sense, for any stable minimal immersion of dimension $n\geq 2$, that may have a singular set $\Sigma$ of locally finite $\mathcal{H}^{n-2}$-measure, and that is weakly close to a hyperplane. (In fact, if the $\mathcal{H}^{n-2}$-measure of the singular set vanishes, the conclusion is strengthened to a union of smooth graphs.) This follows directly from an $\eps$-regularity theorem, that assumes smallness of the scale-invariant $L^2$ tilt-excess (verified when the hypersurface is weakly close to a hyperplane). 
Specialising the multi-valued decomposition to the case of embeddings, we recover the Schoen--Simon theorem (1981). 

In both $\eps$-regularity theorems the relevant quantity (respectively, length of the second fundamental form and tilt function) solves a non-linear PDE on the immersed minimal hypersurface. The proof is carried out intrinsically (without linearising the PDE) by implementing an iteration method \`{a} la De Giorgi (from the linear De Giorgi--Nash--Moser theory). Stability implies estimates (intrinsic weak Caccioppoli inequalities) that make the iteration effective despite the non-linear framework. (In both $\eps$-regularity theorems the method gives explicit constants that quantify the required smallness.)
\end{abstract}

\section{Introduction}

\noindent{\textbf{Part \ref{partI}: curvature estimates}}. In the renowned 1975 work, Schoen--Simon--Yau proved that any properly immersed two-sided stable minimal hypersurface $M$ in $\R^{n+1}$, with $n\leq 5$, and with Euclidean mass growth at infinity, is necessarily a union of affine hyperplanes. We develop a new and alternative approach to this, that additionally solves the case $n=6$, which had since remained a well-known open question. More precisely, we prove:

\begin{thm}[Bernstein-type theorem]
\label{thm:Bernstein6}
Let $M$ be a (smooth) properly immersed two-sided stable minimal hypersurface in $\R^{n+1}$, for $n\in \{2, 3, 4, 5, 6\}$, with Euclidean mass growth at infinity, i.e.~there exists $\Lambda \in (0, \infty)$ such that $\mathcal{H}^n\big(M \cap B_R^{n+1}(0)\big)\leq \Lambda R^n$ for all $R>0$. Then $M$ is a union of affine hyperplanes.
\end{thm}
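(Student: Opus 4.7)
The plan is to deduce Theorem~\ref{thm:Bernstein6} as an immediate consequence of the interior pointwise curvature estimate that the paper establishes in Part~I, itself a corollary of the first $\eps$-regularity theorem announced in the abstract (smallness of the scale-invariant $L^2$ norm of the second fundamental form implies pointwise smoothness bounds). Concretely, I expect that estimate to say: for every $n\in\{2,\ldots,6\}$ and every two-sided stable minimal immersion $M$ into the ambient ball $B_R^{n+1}(0)\subset\R^{n+1}$ satisfying $\mathcal{H}^n(M\cap B_R^{n+1}(0))\leq \Lambda R^n$, one has
\[
\sup_{x\in M\cap B_{R/2}^{n+1}(0)} |A|^2(x)\;\leq\; \frac{C(n,\Lambda)}{R^2}.
\]

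Granted this estimate, Theorem~\ref{thm:Bernstein6} drops out in a single line by scaling. For $M$ as in the statement the mass hypothesis holds globally with a single constant $\Lambda$, so the estimate can be applied on $B_R^{n+1}(0)$ for every $R>0$. Fixing any point $x\in M$ and sending $R\to\infty$ forces $|A|(x)=0$; thus $|A|\equiv 0$ on $M$, and $M$ is totally geodesic. Each connected component of $M$ is therefore contained in an affine hyperplane, and the properness of the immersion into $\R^{n+1}$ together with the global Euclidean mass bound forces each such component to be an entire affine hyperplane.

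The real work, and the main obstacle, lies upstream in establishing the curvature estimate at the borderline dimension $n=6$. The classical Schoen--Simon--Yau approach tests the stability inequality against monomial functions $|A|^{1+q}\phi$ and combines this with the Simons-type inequality $|A|\Delta|A|\geq -|A|^4+\tfrac{2}{n}|\nabla|A||^2$; the admissible range of exponents $q$ in that scheme narrows as $n$ grows and excludes $n=6$, leaving this dimension just outside the reach of the monomial-testing argument. The paper's strategy, as sketched in the abstract, should instead treat the stability inequality as an intrinsic weak Caccioppoli estimate for the non-linear PDE satisfied by $|A|^2$ on $M$, and implement a De Giorgi--Nash--Moser iteration on truncations $(|A|^2-k)_+$ in place of monomial test functions. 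Such a truncation scheme, more flexible than polynomial testing, should close the argument in the full range $n\leq 6$ and yield the $\eps$-regularity from which the curvature estimate, and hence Theorem~\ref{thm:Bernstein6}, follows.
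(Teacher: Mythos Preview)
Your proposal is correct and matches the paper's own argument essentially line for line: the paper deduces Theorem~\ref{thm:Bernstein6} from Theorem~\ref{thm:curv_est_6} by translating an arbitrary $p\in M$ to the origin, applying the curvature estimate on $B_{4R}^{n+1}$, and letting $R\to\infty$ to force $A_M(p)=0$. One minor point on your upstream sketch: the De~Giorgi iteration in the paper is carried out on truncations $(|A|-k)^+$ of $|A|$ itself (via Lemma~\ref{lem:Caccioppoli_A}), not on truncations of $|A|^2$, but this does not affect the deduction you give.
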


As is well-known, an equivalent formulation of this property is given via a priori (interior) curvature estimates, as follows:

\begin{thm}[Pointwise curvature estimates]
\label{thm:curv_est_6}
Assume that $M$ is a (smooth) properly immersed two-sided stable minimal hypersurface in $B_{4R}^{n+1}(0)$, with $0 \in M$ and $n\in \{2, 3, 4, 5, 6\}$, with $\frac{\mathcal{H}^n \big(M \cap B_{4R}^{n+1}(0)\big)}{(4R)^n}\leq \Lambda \in (0, \infty)$. There exists $\beta>0$ depending only on $\Lambda$ and $n$ such that
\[\sup_{x\in B_{\frac{R}{2}}^{n+1}(0)}|A_M|(x) \leq \frac{\beta}{R},\]
where $A_M$ is the second fundamental form of $M$.
\end{thm}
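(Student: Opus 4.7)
The strategy is to derive the pointwise bound from a scale-invariant $\epsilon$-regularity statement for $u := |A_M|^2$, as announced in the abstract: there exist constants $\epsilon_0, C > 0$ depending only on $n$ and $\Lambda$ such that any two-sided stable minimal immersion $M$ with $0 \in M$, $\mathcal{H}^n(M\cap B_\rho^{n+1}(0))/\rho^n \leq \Lambda$, and
\[
\rho^{2-n}\int_{M\cap B_\rho^{n+1}(0)} |A_M|^2 \leq \epsilon_0
\]
satisfies $\rho\,\sup_{M\cap B_{\rho/2}^{n+1}(0)}|A_M| \leq C$. Granted this, Theorem \ref{thm:curv_est_6} follows by a standard point-picking argument: assume for contradiction that the desired $\beta$ cannot be chosen, take counterexamples $M_k$ with $\sup_{B_{R_k/2}}|A_{M_k}|\,R_k \to \infty$, and use an Almgren-type maximum selection to produce $p_k \in M_k$ and radii $\rho_k$ for which $|A_{M_k}|(p_k)\,\rho_k\to\infty$ while $|A_{M_k}|\leq 2|A_{M_k}|(p_k)$ on $B_{\rho_k}(p_k)$. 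Rescale so that $|A|(p_k)=1$; the rescaled sequence has Euclidean area growth and, via the usual stability-Caccioppoli bound $r^{2-n}\int_{M\cap B_r}|A|^2\leq C(\Lambda,n)$, a uniform scale-invariant $L^2$ control on $A$. A dyadic-scales/monotonicity argument then forces this scale-invariant $L^2$ norm below $\epsilon_0$ on some intermediate definite scale, and the $\epsilon$-regularity contradicts $|A|(0)=1$ in the smooth limit.

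The heart of the proof is therefore the $\epsilon$-regularity, and this is where the De Giorgi iteration enters. The two key ingredients are Simons' inequality
\[
-\Delta_M u \leq 2 u^2 \qquad \text{on } M,
\]
and stability $\int_M |A|^2 \eta^2 \leq \int_M |\nabla \eta|^2$ for every Lipschitz $\eta$ compactly supported on $M$. For each level $k > 0$, I would set $v_k := (u-k)_+$, test Simons against $\phi^2 v_k$ for a spatial cutoff $\phi$, and integrate by parts to obtain
\[
\int_M \phi^2 |\nabla v_k|^2 \leq 2\int_M \phi^2 u^2 v_k + 2\int_M \phi v_k |\nabla v_k| |\nabla \phi|.
\]
The obstruction is the super-quadratic term $\int \phi^2 u^2 v_k$, which since $u = v_k + k$ on $\{u > k\}$ contains a cubic contribution $\int \phi^2 v_k^3$. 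I would tame it by testing stability with $\eta = \phi v_k$, yielding the weak Caccioppoli-type bound
\[
\int_M u\, \phi^2 v_k^2 \leq 2\int_M \phi^2 |\nabla v_k|^2 + 2\int_M v_k^2 |\nabla \phi|^2,
\]
which exchanges one power of $u$ for Dirichlet energy and absorbs the cubic term modulo lower-order contributions in $k$.

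With this energy inequality for $v_k$ in hand the iteration is structurally classical: couple it with the intrinsic Michael--Simon Sobolev inequality on $M$ to pass from $L^2$-control of $\nabla v_k$ to $L^{2n/(n-2)}$-control of $v_k$, then apply De Giorgi's geometric level-set lemma. If $\rho^{2-n}\int_{M\cap B_\rho^{n+1}(0)} u$ is initially smaller than an explicit $\epsilon_0(n,\Lambda)$, the iteration closes and delivers a level $k^\ast$ and a smaller ball on which $u \leq k^\ast$ pointwise; rescaling then produces the announced bound on $\sup |A|$.

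The main obstacle is to run this scheme uniformly through dimension $n=6$, where the classical Schoen--Simon--Yau iteration of stability against $|A|^{1+q}$ reaches a critical exponent and Moser bootstrapping to $L^\infty$ breaks down. The De Giorgi approach sidesteps this threshold by working \emph{intrinsically} with the non-linear quantity $u$ at every truncation $v_k$ rather than with a fixed $L^p$ power, and by deploying stability repeatedly on each level set. The delicate point is bookkeeping: one must ensure that the cubic Simons contribution is genuinely absorbed at every step of the iteration, and that the Caccioppoli constants supplied by stability do not accumulate across truncations. Tracking this carefully is also what yields the explicit quantification of $\epsilon_0$ and $C$ in terms of $n$ and $\Lambda$ promised in the abstract.
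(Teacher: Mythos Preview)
There are two genuine gaps.

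\textbf{The Caccioppoli step.} Your scheme does not close. Testing stability with $\eta=\phi v_k$ yields control on $\int u\,\phi^2 v_k^2$, but the term from Simons you must absorb is $\int \phi^2 u^2 v_k$; since $u\ge v_k$ on $\{u>k\}$, one has $u^2 v_k\ge u\,v_k^2$ pointwise and the inequality runs the wrong way. Carrying out the substitution you describe gives $\int\phi^2|\nabla v_k|^2\le 4\int\phi^2|\nabla v_k|^2+\ldots$, which is vacuous. The underlying issue is that writing $-\Delta u\le 2u^2$ discards the good term $|\nabla A|^2$ in the Simons identity, and with it the improved gradient inequality $|\nabla A|^2\ge(1+\tfrac{2}{n})\big|\nabla|A|\big|^2$ of Schoen--Simon--Yau, which is precisely what makes the argument work. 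The paper tests stability with $(|A|-k)^+\eta$ (working with $|A|$, not $|A|^2$), integrates by parts, and uses the \emph{full} Simons identity together with the SSY inequality: the leading $|A|^4$ contributions cancel exactly, the SSY gain leaves $\tfrac{2}{n}\int(1-\tfrac{k}{|A|})\big|\nabla|A|\big|^2\eta^2$ on the left, and only $k$-weighted lower-order terms survive on the right. The dimensional restriction $n\le 6$ then enters through the H\"older exponent needed to handle the cubic remainder $\int((|A|-k)^+)^3$ in the iteration.

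\textbf{The reduction.} After point-picking and rescaling, stability gives only $r^{2-n}\int_{B_r}|A|^2\le C(n,\Lambda)$; there is no monotonicity or dyadic mechanism that forces this below $\epsilon_0$ at any \emph{large} scale. Your smooth limit is a complete two-sided stable minimal hypersurface in $\R^{n+1}$ with Euclidean area growth and $|A|(0)=1$, and concluding it is flat is the Bernstein theorem---equivalent to the statement being proved. The paper avoids this circularity by a different route: pass to a varifold limit $V$ via Allard compactness, then use tangent-cone analysis, Federer's dimension reduction, and Simons' classification of stable minimal cones (valid for $n\le 6$) to show that every iterated tangent cone to $V$ is a union of hyperplanes. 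Near a single hyperplane the $\epsilon$-regularity hypothesis is verified via the tilt-excess bound together with Schoen's inequality; near a classical cone or a transverse union of hyperplanes an inductive higher-integrability estimate (H\"older against the $L^{2n/(n-2)}$ bound coming from the $k=0$ Caccioppoli inequality plus Michael--Simon) controls $\int|A|^2$ across the spine. This verifies the smallness hypothesis along the approximating sequence and yields the contradiction.
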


\begin{oss}
As usual with interior-type estimates, the choice of $\frac{1}{8}$ as ratio between the relevant radii is arbitrary. Any ratio smaller than $1$ can be allowed, and the constant $\beta$ will depend on the chosen ratio.
\end{oss}

\begin{oss}
Theorem \ref{thm:Bernstein6} holds if we replace properness with the fact that the immersion is complete. Indeed, combining the coarea formula and the isoperimetric inequality one can bound from below the $n$-area of an intrinsic ball of radius $r$ with $c(n) r^n$, for a dimensional constant $c(n)$ (see e.g.~\cite{Tysk}). This implies (using completeness and area bounds) that the immersion is proper.
\end{oss}

As recalled above, the Schoen--Simon--Yau theory obtained these theorems for $n\leq 5$, see \cite[Theorem 3]{SSY}. (In fact, under the weaker assumption of Euclidean area growth in the intrinsic sense.) For $n=6$, the validity of these properties under an additional multiplicity-$2$ condition, more precisely under the restriction $\Lambda<3\, \omega_6$, was obtained by Wickramasekera, see \cite[Theorems 9.1 and 9.2]{WicDG} (the notation $\omega_n$ stands for the $n$-volume of the $n$-dimensional unit ball).
If $M$ is assumed to be properly embedded, rather than immersed, the above results are known to be valid for $n\leq 6$ in view of the fundamental sheeting theorem by Schoen--Simon, see \cite[Theorems 1 and 3]{SchSim}. For $n \geq 7$, on the other hand, it has long been known that the situation is drastically different and the above results do not hold, even assuming that $M$ is properly embedded, as in the example of the Hardt--Simon foliation \cite{HS}.

We recall that the case $n=2$ can be treated by means of a logarithmic cut-off argument, and, in fact, it has long been known that the above theorems hold for $n=2$ without any mass hypothesis (see do Carmo--Peng \cite{dCP}, Fischer-Colbrie--Schoen \cite{FCSc}, and Pogorelov \cite{Pog}). Very recently, the Euclidean mass growth assumption has been shown to be redundant for $n=3$ in the work of Chodosh--Li \cite{ChoLi}, which resolved a long-standing conjecture of Schoen (see also subsequent alternative proofs by the same authors \cite{ChoLi2} and by Catino--Mastrolia--Roncoroni \cite{CaMaRo}). Such Bernstein theorems imply that, for $n=2,3$, the constant $\beta$ in Theorem \ref{thm:curv_est_6} does not depend on $\Lambda$.\footnote{After the appearance of this paper, Chodosh--Li--Minter--Stryker \cite{CLMS} and Mazet \cite{Maz} showed the validity of `stable Bernstein' theorems without any mass growth hypothesis, respectively for $n=4$ and $n=5$. Consequently, in these dimensions, $\beta$ in Theorem \ref{thm:curv_est_6} does not depend on $\Lambda$.}

\medskip

We obtain Theorems \ref{thm:Bernstein6} and \ref{thm:curv_est_6} as a consequence of an $\eps$-regularity theorem, in which the relevant (small) quantity is the scale-invariant $L^2$-norm of the second fundamental form:

\begin{thm}[$\eps$-regularity for the second fundamental form]
\label{thm:eps_reg_A}
Let $n\leq 6$. There exists $\epsilon_0>0$, depending only $n$ (a sufficiently small $\epsilon_0$ is explicitly given in \eqref{eq:eps_0_quant} below) with the following significance. Let $M$ be a properly immersed two-sided stable minimal hypersurface in $B_{2R}^{n+1}(0)$, with $0 \in M$ and with 
\[\frac{1}{(2R)^{n-2}}\int_{M\cap B_{2R}^{n+1}(0)} |A_M|^2 \leq \epsilon_0.\] 
Then for every $x\in M\cap B_{R/2}^{n+1}(0)$ we have $|A_M|(x)\leq \frac{1}{R}$. More precisely, in the above smallness regime, we have, for a (explicit) dimensional constant $c(n)$,
\[\sup_{M\cap B^{n+1}_{R/2}(0)}|A_M|\leq \frac{c(n)}{R} \Bigg( \frac{1}{(2R)^{n-2}}\int_{M\cap B_{2R}^{n+1}(0)} |A_M|^2 \Bigg)^{\frac{1}{2}}.\]
\end{thm}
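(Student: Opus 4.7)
My plan is to implement the intrinsic De Giorgi-type iteration announced in the abstract. Three classical ingredients will be combined on the stable minimal immersion $M$: Simons' identity, the Schoen--Simon--Yau refined Kato inequality, and the stability (second variation) inequality. Together they drive a non-linear level-set iteration for the scalar quantity $u:=|A_M|$. (Throughout I assume $n\geq 3$; the case $n=2$ admits a standard logarithmic cut-off treatment, as mentioned in the introduction.)

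First I would record the pointwise differential inequality
\[
|A_M|\,\Delta_M |A_M| \;\geq\; \tfrac{2}{n}\,\big|\nabla|A_M|\big|^2 \;-\; |A_M|^4,
\]
which follows from Simons' identity $\tfrac{1}{2}\Delta_M |A_M|^2 = |\nabla A_M|^2 - |A_M|^4$ on a minimal hypersurface in flat ambient, together with the Kato improvement $|\nabla A_M|^2 \geq (1+\tfrac{2}{n})\big|\nabla|A_M|\big|^2$ (valid $\mathcal{H}^n$-a.e.\ on $M$, after regularising $|A_M|$ by $\sqrt{|A_M|^2+\delta}$ and sending $\delta\to 0$). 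Testing the stability inequality $\int_M |A_M|^2\phi^2\leq \int_M |\nabla\phi|^2$ against $\phi = f(|A_M|)\eta$, with $f$ chosen to isolate the truncation of $u$ at level $k\geq 0$, and combining with the Simons--Kato inequality to absorb the quartic contribution, should produce an intrinsic \emph{weak Caccioppoli inequality at level $k$} of the form
\[
\int_M \big|\nabla (u-k)_+\big|^2\,\eta^2 \;+\; \int_M |A_M|^2(u-k)_+^2\,\eta^2 \;\leq\; C(n)\!\int_M (u-k)_+^2\,|\nabla \eta|^2,
\]
for every $\eta\in C_c^1(B_{2R}^{n+1}(0))$. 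This is the crux of the argument, and it is where the non-linearised (intrinsic) treatment is essential: the coefficient of the favourable gradient term comes out positive precisely in the regime $n\leq 6$.

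Next I would pair the Caccioppoli inequality with the Michael--Simon Sobolev inequality on $M$ (whose constant depends only on $n$, since minimality kills the mean-curvature correction). This upgrades it to a reverse-Hölder estimate for the truncation $(u-k)_+$, of the form
\[
\Bigg(\int_{M\cap B_\rho}(u-k)_+^{2^*}\Bigg)^{\!2/2^*} \leq \frac{C(n)}{(\sigma-\rho)^{2}}\int_{M\cap B_\sigma}(u-k)_+^{2},\qquad 2^*=\tfrac{2n}{n-2},
\]
valid for every $k\geq 0$ and $0<\rho<\sigma<R$. Setting $\varphi(k,\rho):=\int_{M\cap B_\rho}(u-k)_+^2$, and applying Hölder's inequality against the level-set measure $\mathcal{H}^n\big(\{u>h\}\cap M\cap B_\rho\big)$ (which is itself controlled by $\varphi(k,\rho)/(h-k)^2$ for $h>k$), I obtain a non-linear recursion
\[
\varphi(h,\rho) \;\leq\; \frac{C(n)}{(\sigma-\rho)^{2}\,(h-k)^{2\delta}}\;\varphi(k,\sigma)^{1+\delta},\qquad \delta=\tfrac{2}{n-2}>0.
\]
The standard De Giorgi lemma for $\varphi$ then produces an explicit threshold $k_0$ beyond which $\varphi(k_0,R/2)=0$; tracing the dependence, one finds $k_0 = c(n)\,R^{-1}\,\big(R^{2-n}\!\int_{M\cap B_{2R}}|A_M|^2\big)^{1/2}$, valid whenever the initial scale-invariant $L^2$ excess is below a computable $\epsilon_0(n)$. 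This is precisely the claimed pointwise estimate.

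The main obstacle I expect is the derivation of the weak Caccioppoli inequality at level $k$: the ordinary SSY-type Moser test function $|A_M|^{1+q}\eta$ does not directly produce a statement monotone in $k$, and inserting $(u-k)_+\eta$ in stability generates both the quartic Simons term and error contributions at the transition $\{u=k\}$ that must be absorbed. I would resolve this by testing Simons against $(u-k)_+\eta^2$ (rather than against $u\,\eta^2$ as in SSY) to cancel the $|A_M|^4$ term against the favourable Kato gradient term, handling the set $\{u=0\}$ through the $\delta$-regularisation above. Tracking the absorption constants carefully is what identifies $n=6$ as the borderline dimension in which the iteration still closes, and what produces the quantitative value of $\epsilon_0(n)$.
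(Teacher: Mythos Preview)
Your overall plan matches the paper's, but the Caccioppoli step you write down is stronger than what the test functions you describe actually yield, and this is the genuine gap. Testing stability with $(u-k)_+\eta$ and pairing with Simons' inequality $u\Delta u\ge\tfrac{2}{n}|\nabla u|^2-u^4$ (multiplied by $(1-k/u)_+\eta^2$, equivalently testing $\Delta u$ against $(u-k)_+\eta^2$), the quartic terms do \emph{not} cancel cleanly: one is left with a residual $k\int_{\{u>k\}} u^2(u-k)\,\eta^2$ on the right, and the gradient term on the left carries the degenerating factor $(1-k/u)$ rather than the full $\chi_{\{u>k\}}$. The actual inequality (Lemma~\ref{lem:Caccioppoli_A}, see \eqref{eq:Caccioppoli_A}) reads
\[
\int_{\{u>k\}}\!\Big(1-\tfrac{k}{u}\Big)|\nabla u|^2\eta^2\ \le\ \tfrac{n}{2}\!\int(u-k)_+^2|\nabla\eta|^2\ +\ \tfrac{n}{2}\,k\!\int u^2(u-k)_+\,\eta^2,
\]
and the expansion $ku^2(u-k)=k(u-k)^3+2k^2(u-k)^2+k^3(u-k)$ on $\{u>k\}$ introduces a \emph{cubic} truncation error. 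It is this cubic term that drives both the smallness hypothesis $\epsilon_0$ and the dimensional restriction: controlling $\int(u-k)_+^3$ via H\"older against the iteration quantity $\int(u-k)_+^{2n/(n-2)}$ requires $\tfrac{2n}{n-2}\ge 3$, i.e.\ $n\le 6$.

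Your attribution of the $n\le 6$ constraint to a sign change in the gradient coefficient is therefore not correct---the Kato constant $\tfrac{2}{n}$ is positive in every dimension---and the clean recursion $\varphi(h,\rho)\le C(n)(\sigma-\rho)^{-2}(h-k)^{-2\delta}\varphi(k,\sigma)^{1+\delta}$ you state would, if it held, yield the $L^\infty$ bound with \emph{no} smallness assumption and in \emph{every} dimension, contradicting the Hardt--Simon examples for $n\ge 7$. The paper handles the degeneracy $(1-k/u)$ by the dyadic-level observation $(1-k_\ell/u)\ge 2^{-\ell}$ on $\{u>k_{\ell+1}\}$, and handles the cubic error by running the iteration in $S_\ell=\int_{B_{R_\ell}}(u-k_\ell)_+^{2n/(n-2)}$; the resulting recursion then carries the extra factor $R^{-2}d^{4/(n-2)}+d^{2(n-4)/(n-2)}$ (with $d=\lim k_\ell$), which is precisely what produces the smallness threshold $\epsilon_0$.
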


Theorem \ref{thm:eps_reg_A} is established by PDE methods, working intrinsically on the immersed hypersurface. The relevant (non-linear) PDE is the Simons equation for $|A_M|$. The proof is obtained by implementing an iteration scheme \`{a} la De Giorgi, in the style of the linear theory in \cite{DG} (the widely known De Giorgi--Nash--Moser theory). The iteration relies on the validity of a weak intrinsic Caccioppoli inequality, valid for level set truncations of $|A_M|$. We establish this inequality in Lemma \ref{lem:Caccioppoli_A}, from the associated PDE, making (essential) use of the stability hypothesis to control the terms that escape the linear PDE theory framework. 

\medskip

Theorems \ref{thm:Bernstein6} and \ref{thm:curv_est_6} then follow employing soft classical geometric measure theory arguments: Allard's compactness, tangent cone analysis, Federer's dimension reduction. 
Ultimately as a consequence of the well-known Simons classification of stable cones (\cite{Simons}), the analysis only needs to address scenarios in which $M$ is close to a cone of the following very specific types: a single hyperplane with multiplicity, a classical cone\footnote{A classical cone is the union of three or more (distinct) closed half-hyperplanes, having for boundary a common $(n-1)$-dimensional subspace of $\R^{n+1}$, and all intersecting at said boundary, each half-hyperplane endowed with an integer multiplicity. The common $(n-1)$-dimensional subspace is also referred to as the spine, a term which in general denotes the maximal subspace along which a cone is translation invariant.}, or a union of hyperplanes with multiplicity. (Closeness is understood in the sense of varifolds.) In all these cases, the smallness condition of Theorem \ref{thm:eps_reg_A} is verified. In the first scenario, this is a consequence of the control of the tilt-excess by the height-excess (recalled in Remark \ref{oss:height_excess}) and of Schoen's inequality (\cite{Sch}, \cite{SchSim}, see also \eqref{eq:Schoen_ineq_k} below taken with $k=0$). In the remaining ones, smallness follows by an inductive argument (on the dimension of the spine of the cone), also using a higher integrability estimate close to the spine.

\begin{oss}
For $n=3$, the proof of Theorem \ref{thm:eps_reg_A} does not require any smallness assumption, thus it establishes directly Theorems \ref{thm:Bernstein6} and \ref{thm:curv_est_6} (see Appendix \ref{n=3}). In fact, this argument can be carried out also under an intrinsic area growth hypothesis.
\end{oss}

\noindent{\textbf{Part \ref{partII}: Towards a compactness theory for branched stable minimal immersions}}. In the second part of this work, we consider stable minimal immersed hypersurfaces of arbitrary dimension $n$, and allow a singular set. More precisely, we enlarge the class of two-sided stable minimal immersions by allowing $M$ to have a singular set with vanishing $2$-capacity, in particular, we allow it to have locally finite $\mathcal{H}^{n-2}$-measure. Classical branch points show that singularities of this size can indeed arise for the class in question (in contrast with the embedded case, see \cite{SchSim} and Theorem \ref{thm:SchSim} below, in which case branching can be ruled out a posteriori).

We take an intrinsic PDE approach that appears to be new in regularity theory for minimal hypersurfaces. 
The key step is an $\eps$-regularity result for the scale-invariant tilt excess, Theorem \ref{thm:eps_reg_tilt} below.
The tilt function on $M$ is 
\[g=\sqrt{1-(\nu_{M} \cdot e_{n+1})^2},\]
where $\nu_{M}$ is a choice of unit normal to $M$ and $e_{n+1}$ is (any fixed unit vector, which we can without loss of generality assume to be) the last coordinate vector. The tilt-function thus varies in $[0,1]$, with values being higher where the tangent to $M$ tilts more with respect to the reference hyperplane $\{x_{n+1}=0\}$. The smallness condition is assumed on the scale-invariant $L^2$-norm of $g$. More precisely, letting $C_{r}$ denote the cylinder $B_{r}^{n}(0) \times (-r, r)$, for $R>0$ the scale-invariant $L^2$ tilt-excess of $M$ on $C_R$ is the quantity $E_M(R)$ defined by 
\[E_M(R)^2=\frac{1}{R^n}\int_{M\cap C_R} (1-(\nu_{M} \cdot e_{n+1})^2).\]

\begin{thm}[$\eps$-regularity for the tilt]
\label{thm:eps_reg_tilt}
Let $n\geq 2$. Let $M$ be a properly immersed, two-sided, stable minimal hypersurface in $C_{2R} \setminus \Sigma$, where $\Sigma$ is closed in $C_{2R}=B^n_{2R}(0) \times (-2R, 2R)$, and with $\text{cap}_2(\Sigma)=0$ (in particular, $\mathcal{H}^{n-2}\big(\Sigma \cap K\big)<\infty$ for every $K\subset \subset C_{2R}$ is permitted). There exists a positive dimensional constant $k(n)$ (a sufficiently small $k(n)$ is given explicitly in \eqref{eq:dim_constant_1} below) with the following significance. Assume that
\[E_M(R)^2=\frac{1}{R^n}\int_{M\cap C_R} g^2 \leq k(n).\]
Then 
\[\sup_{M\cap C_{\frac{R}{2}}} g\leq \frac{1}{2n}.\]
In fact, there exists a dimensional constant $c(n)$ such that, if $E_M(R)^2 \leq k(n)$, then for every $x\in  M\cap C_{\frac{R}{2}}$ we have
\[g(x) \leq c(n) E_M(R).\]
\end{thm}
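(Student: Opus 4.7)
My plan is to carry over to the tilt function the intrinsic De Giorgi iteration that underlies Theorem \ref{thm:eps_reg_A}. The first step is to identify the quasilinear PDE governing $g$ on $M\setminus \Sigma$. Writing $u:=\nu_M\cdot e_{n+1}$, two-sidedness and minimality yield the Jacobi identity $\Delta_M u + |A_M|^2 u = 0$; combining this with $g^2 = 1 - u^2$ and $u\,\nabla u + g\,\nabla g = 0$ produces
\[
\Delta_M g^2 \;+\; \frac{|\nabla_M g^2|^2}{2(1-g^2)} \;=\; 2\,|A_M|^2\,(1-g^2) \qquad \text{on } M\setminus \Sigma,
\]
the tilt analogue of the Simons identity used for $|A_M|$ in Theorem \ref{thm:eps_reg_A}. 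Two features essential for the iteration are already visible: the non-linear gradient term on the left has a favourable sign (absorbable into the principal Caccioppoli term on level sets bounded away from $\{g=1\}$); the right-hand side carries $|A_M|^2$, which is precisely the quantity controlled by stability.

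The key step is a weak intrinsic Caccioppoli inequality for the truncations $(g^2-k)_+$, $k\in [0,1)$. Testing the above PDE against $\phi^2(g^2-k)_+$ with $\phi\in C^1_c(C_{2R}\setminus \Sigma)$ and integrating by parts on $M\setminus \Sigma$, a Cauchy--Schwarz absorption of the drift, the stability inequality applied with $\phi\sqrt{(g^2-k)_+}$ to handle the $|A_M|^2$ term, and an absorption of the quadratic gradient correction (valid when $\{g^2\geq k\}\cap \mathrm{supp}\,\phi$ is strictly separated from $\{g=1\}$) together yield
\[
\int_{M}\phi^2\,|\nabla (g^2-k)_+|^2 \;\leq\; c(n) \int_{M}(g^2-k)_+^{2}\,|\nabla \phi|^2.
\]
The singular set $\Sigma$ is removed by replacing $\phi$ with $\phi\cdot \eta_j$, where $\{\eta_j\}$ is a sequence of cutoffs vanishing on neighbourhoods of $\Sigma$ with $\int|\nabla \eta_j|^2\to 0$; the existence of such $\eta_j$ is exactly the content of $\text{cap}_2(\Sigma)=0$, and the extra product terms collapse in the limit.

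With the Caccioppoli in hand, the argument proceeds by the classical De Giorgi template: couple it with the intrinsic Michael--Simon Sobolev inequality on the minimal hypersurface $M$, and iterate on a geometrically shrinking sequence of radii $r_j\searrow R/2$ and a strictly increasing sequence of levels $k_j$. The hypothesis $E_M(R)^2\leq k(n)$ places the initial energy $\int_{M\cap C_R}g^2 = R^n E_M(R)^2$ in the basin of attraction of the recursion, forcing super-geometric decay of $\int_{M\cap C_{r_j}}(g^2-k_j)_+^2$ and hence $\sup_{M\cap C_{R/2}} g\leq 1/(2n)$. The sharp pointwise bound $g(x)\leq c(n)E_M(R)$ follows from a parallel Moser-type iteration (using the Caccioppoli above for $(g^2)^p$ at all exponents $p\geq 1$), which exhibits the linear dependence of $\|g\|_{L^\infty(C_{R/2})}$ on $\|g\|_{L^2(C_R)}$.

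The main obstacle I expect is the Caccioppoli. The non-linear correction $|\nabla g^2|^2/(2(1-g^2))$ can only be absorbed on level sets strictly separated from $\{g=1\}$, which forces the iteration to be launched from a preliminary pointwise bound $g\leq 1-\delta$; deriving such a bound from the sole $L^2$ smallness of $g$, in the presence of a singular set of only capacitary smallness, is where the hypothesis $\text{cap}_2(\Sigma)=0$ enters essentially, beyond its bookkeeping role in the integration by parts. Matching the quantitative constants produced by this preliminary step with those of the De Giorgi recursion, and pairing them with the stability-based treatment of the $|A_M|^2$ forcing, is what pins down the explicit threshold $k(n)$ in \eqref{eq:dim_constant_1}.
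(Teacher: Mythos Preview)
Your outline has the right architecture (intrinsic De Giorgi iteration, Michael--Simon, capacity argument for $\Sigma$), but the Caccioppoli step, as you set it up, carries a genuine gap---and it is precisely the gap you flag in your last paragraph. You test the PDE for $g^2$ against $\phi^2(g^2-k)_+$ and then invoke stability with $\phi\sqrt{(g^2-k)_+}$; this order of operations forces you to absorb the $|A|^2(1-g^2)$ forcing at the cost of a gradient term of the wrong strength (the stability test produces $\int \phi^2 |\nabla g^2|^2/(g^2-k)_+$, which dominates the good term $\int \phi^2 |\nabla g^2|^2$ near the level set). That is why you end up needing a preliminary bound $g\le 1-\delta$, which you do not know how to obtain from $L^2$ smallness alone. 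There is no such preliminary step available, and the proof does not require one.

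The paper avoids the obstacle entirely by reversing the logic: plug $(g-k)^+\phi$ \emph{directly} into the stability inequality, then expand $\int|\nabla((g-k)^+\phi)|^2$ by integration by parts and substitute the PDE $g\Delta g=-\frac{|\nabla g|^2}{1-g^2}+|A|^2(1-g^2)$ together with the Schoen--Simon improved inequality $\frac{|\nabla g|^2}{1-g^2}\le(1-\tfrac1n)|A|^2$ (which you do not mention, and which is essential). The $|A|^2$ terms on both sides then combine algebraically to give the factor $\big(\tfrac1n-kg\big)$, and since $g\le 1$ this is $\ge \tfrac{1}{2n}$ whenever $k\le\tfrac{1}{2n}$, \emph{with no assumption on how close $g$ is to $1$}. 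The resulting Caccioppoli is $\tfrac{1}{2n}\int_{\{g>k\}}|\nabla g|^2(1-\tfrac{k}{g})\phi^2\le\int((g-k)^+)^2|\nabla\phi|^2$, valid for all $k\in[0,\tfrac{1}{2n}]$; the iteration then runs with levels $k_\ell\nearrow \tfrac{d}{2n}$ for $d\in(0,1]$. Finally, the sharp linear bound $g\le c(n)E_M(R)$ is not obtained by a separate Moser iteration but simply by tracking the dependence of the smallness threshold on the free parameter $d$ in the same De Giorgi scheme.
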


Theorem \ref{thm:eps_reg_tilt} is applicable when $M \cap C_R$ is sufficiently close to a hyperplane, which we can assume to be $\{x_{n+1}=0\}$ by a suitable rotation. This follows from the standard control of $E_R(M)$ by means of the $L^2$ height-excess (as recalled in Remark \ref{oss:height_excess} this is an easy consequence of the minimality assumption).
The bound on $g$ obtained in Theorem \ref{thm:eps_reg_tilt} forces a decomposition of $\overline{M} \cap C_{\frac{R}{2}}$ into a union of graphs (over $B^{n+1}_{\frac{R}{2}} \cap \{x_{n+1}=0\}$), where $\overline{M}$ denotes the closure of $M$ in $C_{2R}$. In the general case considered in Theorem \ref{thm:eps_reg_tilt} these graphs are Lipschitz. However, with stronger assumptions we obtain stronger conclusions as well. We illustrate three main instances: the general case of singular immersions, the case of smooth immersions, the case of singular embeddings, respectively Theorems \ref{thm:immersion_sing_sheeting}, \ref{thm:immersion_smooth_sheeting} and \ref{thm:SchSim} below. 
All three follow from the $\eps$-regularity result, Theorem \ref{thm:eps_reg_tilt}, very directly. 

\medskip
  
Theorem \ref{thm:eps_reg_tilt} is (as was the case for Theorem \ref{thm:eps_reg_A}) established by PDE methods, working intrinsically on the immersed hypersurface $M$. 
The relevant (non-linear) PDE, for the tilt-function $g$, is a direct consequence of the Jacobi field equation for $(\nu \cdot e_{n+1})$. We prove that a weak intrinsic Caccioppoli inequality is valid for level set truncations of $g$, see Lemma \ref{lem:Caccioppoli_g_sing}; this uses the associated PDE and the stability hypothesis (to control the terms that escape the linear PDE theory framework, which involve $|A_M|$). We then implement an iteration \`{a} la De Giorgi. In Remarks \ref{oss:CI_A} and \ref{oss:CI_g} we discuss similarities and differences between the two intrinsic weak Caccioppoli inequalities (Lemmas \ref{lem:Caccioppoli_A} and \ref{lem:Caccioppoli_g_sing}), as well as compare them to those in De Giorgi's work \cite{DG}.

\begin{thm}[sheeting theorem for singular immersions]
\label{thm:immersion_sing_sheeting}
In the hypotheses of Theorem \ref{thm:eps_reg_tilt}, with the further assumption that $\sup_{M \cap C_{2R}} |x_{n+1}|<\frac{R}{2}$, we have
\[\overline{M} \cap C_{\frac{R}{2}} = \cup_{j=1}^q \text{graph}(u_j)\]
for some $q\in \N$, and $u_j:B^n_{\frac{R}{2}} \to \R$ are Lipschitz functions, with Lipschitz constant at most $\frac{1}{2n}$ and $u_j \leq u_{j+1}$ for every $j \in \{1, \ldots, q-1\}$. (We identify $B^n_{\frac{R}{2}}$ with $B^{n+1}_{\frac{R}{2}}(0) \cap \{x_{n+1}=0\}$ and the target $\R$ with the $x_{n+1}$ coordinate axis.) More precisely, the Lipschitz constant of each $u_j$ is bounded by $c(n) E_M(R)$, with $c(n)$ a dimensional constant.
\end{thm}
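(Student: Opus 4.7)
The analytic content has already been supplied by Theorem \ref{thm:eps_reg_tilt}, which gives the pointwise bound $g(x) \leq c(n) E_M(R) \leq \frac{1}{2n}$ at every $x \in M \cap C_{R/2}$. What remains is to convert this tilt control into a global graphical decomposition, extend the graphs across $\Sigma$, and verify the ordering. On each connected regular component choose the unit normal $\nu_M$ with $\nu_M \cdot e_{n+1} \geq \sqrt{1 - 1/(4n^2)} > 0$; the strict inequality makes this choice continuous. The tangent plane $T_xM$ is then the graph, over $\{x_{n+1}=0\}$, of a linear map of slope $g(x)/\sqrt{1-g(x)^2} \leq c'(n) E_M(R)$, and the implicit function theorem realises $M$ near every regular point as a smooth graph $\{x_{n+1} = u(x')\}$ with $|\nabla u| \leq c'(n) E_M(R)$. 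Consequently the projection $\pi : M \cap C_{R/2} \to B^n_{R/2}$ is everywhere a local diffeomorphism, and the height hypothesis $\sup_M|x_{n+1}| < R/2$ keeps every such local graph inside $C_{R/2}$.

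\textbf{Globalising on the regular base and extending across $\pi(\Sigma)$.} Since $\text{cap}_2(\Sigma) = 0$ and $\pi$ is $1$-Lipschitz, $\pi(\Sigma) \cap B^n_{R/2}$ is a closed set of vanishing $\mathcal{H}^{n-1}$-measure, hence cannot disconnect $B^n_{R/2}$; the open dense complement $U := B^n_{R/2} \setminus \pi(\Sigma)$ is therefore connected. For every $y \in U$, properness of the immersion on $C_{2R} \setminus \Sigma$ together with the local graph structure makes the fibre $\pi^{-1}(y) \cap M$ a finite set, ordered as $t_1(y) < t_2(y) < \ldots < t_{q(y)}(y)$ in the $x_{n+1}$-direction; the cardinality $q(y)$ is locally constant by the implicit function theorem and hence equals a single integer $q$ on $U$. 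Each $t_j$ coincides in a neighbourhood of any $y \in U$ with one of the local smooth graphs from the previous paragraph, so it is Lipschitz with constant at most $c(n) E_M(R)$. Being bounded and uniformly continuous on the dense subset $U$, each $t_j$ admits a unique Lipschitz extension $u_j : B^n_{R/2} \to \R$ with the same Lipschitz constant, and the ordering $t_1 < \ldots < t_q$ passes in the limit to $u_1 \leq \ldots \leq u_q$.

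\textbf{Final identification and main obstacle.} The identity $\overline{M} \cap C_{R/2} = \bigcup_{j=1}^q \text{graph}(u_j)$ follows at once: over $U$ it is built into the construction, and over $\pi(\Sigma)$ each point $(y,u_j(y))$ is by definition a limit of $(y_k, t_j(y_k))$ with $y_k \to y$ in $U$, so it lies in $\overline{M}$; conversely, any accumulation point of $M$ over $y \in \pi(\Sigma)$ must, by the uniform Lipschitz bound on the sheets, equal some $(y,u_j(y))$. The subtle step, in which the full strength of the capacity hypothesis $\text{cap}_2(\Sigma) = 0$ is used, is the constancy of the sheet count $q$ across $\pi(\Sigma)$ and the fact that the labelled sheets do not reshuffle, merge or split on crossing the singular locus: the thinness of $\pi(\Sigma)$ both forbids a disconnection of $B^n_{R/2}$ and rules out any obstruction to uniqueness of the Lipschitz extension, so the decomposition extends rigidly from $U$ to all of $B^n_{R/2}$.
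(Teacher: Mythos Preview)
Your argument is correct and mirrors the paper's proof: invoke Theorem~\ref{thm:eps_reg_tilt} for the pointwise tilt bound, deduce local graphicality, use connectedness of $B^n_{R/2}\setminus\pi(\Sigma)$ (from $\mathcal{H}^{n-1}(\pi(\Sigma))=0$) to obtain a constant sheet number $q$, order the heights, and Lipschitz-extend across $\pi(\Sigma)$. One minor caveat: since $M$ is only immersed, distinct sheets may touch, so your fibre count must be taken with multiplicity (preimages in $S$ under $\pi\circ\iota$, with $t_j\le t_{j+1}$) rather than as the cardinality of the set $\pi^{-1}(y)\cap M$ with strict inequalities --- the paper sidesteps this by phrasing the step in covering-space language ($\pi\circ\iota$ restricted to each component of $S$ is an $N$-cover of $B^n_{R/2}\setminus\pi(\Sigma)$).
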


In fact, $M \cap C_{\frac{R}{2}}$ in Theorem \ref{thm:immersion_sing_sheeting} is naturally a smooth $q$-valued function on $B^n_{\frac{R}{2}} \setminus \pi(\Sigma)$, where $\pi:\R^n \times \R \to \R^n$ is the standard projection. In general, we do not have $q$ smooth graphs on $B_{\frac{R}{2}} \setminus \pi(\Sigma)$: the size of $\Sigma$ permits classical branching, therefore smoothness only holds for the $q$-valued function and, in general, there is no ``selection'' of $q$ smooth functions on $B_{\frac{R}{2}} \setminus \pi(\Sigma)$. On the other hand, one can easily write $\overline{M} \cap C_{\frac{R}{2}}$ as the union of graphs of $q$ Lipschitz functions by ordering the $q$ values increasingly and extending the Lipschitz functions across $\pi(\Sigma)$, as done in the statement of Theorem \ref{thm:immersion_sing_sheeting}. 

The multi-valued graph structure obtained in Theorem \ref{thm:immersion_sing_sheeting} rules out, for example, that (in a branched stable minimal immersion with singular set of locally finite $(n-2)$-measure) there may be an accumulation of necks (connecting different sheets) onto a flat branch point.

\begin{oss}
Working with immersions of the same type as in Theorem \ref{thm:immersion_sing_sheeting}, and under an additional `multiplicity 2' assumption, Wickramasekera \cite{WicDG} obtained that when the $L^2$ height-excess $\int_{C_R} |x_{n+1}|^2$ is sufficiently small then a sheeting description is valid by means of a $C^{1,\alpha}$ $2$-valued function. (The strategy in \cite{WicDG} involves a $2$-valued Lipschitz approximation of $M$, followed by a linearisation of the problem, which in particular prevents a quantitative smallness condition.)
\end{oss}

Theorem \ref{thm:immersion_sing_sheeting} advances towards a compactness theory for branched stable minimal immersions. (This was obtained for a ``multiplicity $2$ class'' in \cite{WicDG}.) The natural missing step is the analysis of the situation in which, rather than being close to a hyperplane with multiplicity, $M$ is close to a classical cone. In view of Theorem \ref{thm:immersion_sing_sheeting}, and of the multiplicity-$2$ case in \cite{WicDG}, it seems natural to expect that:

\noindent \emph{Conjecture}: the class of branched two-sided stable minimal $n$-dimensional immersions with singular set of locally finite $(n-2)$-measure is compact under varifold convergence.

A further natural aim would then be to obtain a finer structure result for said singular set, in the style of \cite[Theorem 1.5]{WicDG}. It may be possible to use an intrinsic approach. This lies outside the scope of this work. 

\begin{oss}[\emph{Unique tangent hyperplanes and Bernstein-type theorem}]
An immediate byproduct of Theorem \ref{thm:immersion_sing_sheeting}, for the class of immersions under study, is that: if $x\in \overline{M}$ is such that one tangent cone (in the sense of varifolds) to $M$ at $x$ is supported on a hyperplane, then that is the unique tangent cone at $x$ (Corollary \ref{cor:uniq_tang}). Similarly, it follows immediately from Theorem \ref{thm:immersion_sing_sheeting} that, if $M$ is entire, with Euclidean mass growth, and if one tangent cone at infinity is a hyperplane with multiplicity, then $M$ is a union of hyperplanes (Corollary \ref{cor:uniq_tang_infinity}). 
\end{oss}

If the singular set $\Sigma$ in Theorem \ref{thm:eps_reg_tilt} is a priori assumed empty then the graphical decomposition is stronger and prompts a linear PDE behaviour, with a linear (interior) control of $\sup|A_M|$ by $E_M(R)$:

\begin{thm}[sheeting theorem for smooth immersions]
\label{thm:immersion_smooth_sheeting}
In the hypotheses of Theorem \ref{thm:eps_reg_tilt}, with the further assumptions that $\sup_{M \cap C_{2R}} |x_{n+1}|<\frac{R}{2}$ and that $\Sigma=\emptyset$ (that is, $M$ is a closed immersed hypersurface in $C_{2R}$), we have
\[M \cap C_{\frac{R}{2}} = \cup_{j=1}^q \text{graph}(v_j),\]
where $v_j: B^n_{\frac{R}{2}}(0) \equiv B_{\frac{R}{2}}^{n+1}(0) \cap \{x_{n+1}=0\} \to \R \equiv \text{span}(e_{n+1})$ are smooth functions and $q\in \N$. (We note that these graphs are not ordered, they may cross.) Moreover, $\sup_{j\in\{1, \ldots, q\}} \|\nabla v_j\|_{C^{1,\alpha}\big(B^n_{\frac{R}{2}}(0)\big)} \leq c(n) E_M(R)$, for a dimensional constant $c(n)$. In particular,
\[\sup_{C_{\frac{R}{2}}\cap M}|A_{M}| \leq c(n) E_M(R).\]
\end{thm}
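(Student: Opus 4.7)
The plan is to upgrade the pointwise tilt bound delivered by Theorem \ref{thm:eps_reg_tilt} to a global smooth graphical decomposition, and then obtain the $C^{1,\alpha}$ and second fundamental form estimates by standard elliptic regularity for the minimal surface equation in the small-gradient regime.

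First I would invoke Theorem \ref{thm:eps_reg_tilt} to conclude $g(x) \leq c(n) E_M(R) \leq \frac{1}{2n}$ for every $x \in M \cap C_{R/2}$. In particular, the unit normal $\nu_M$ is nowhere vertical, so writing $\iota : M \to C_{2R}$ for the immersion and $\pi : \R^n \times \R \to \R^n$ for the vertical projection, the composition $\tilde\pi := \pi \circ \iota$ is a local diffeomorphism on $\iota^{-1}(C_{R/2})$. The height constraint $\sup_{M \cap C_{2R}} |x_{n+1}| < R/2$, combined with the properness of $\iota$ and the hypothesis $\Sigma = \emptyset$ (so that $\iota$ is a genuine smooth immersion throughout $C_{2R}$), yields properness of $\tilde\pi$: for compact $K \subset B^n_{R/2}$ the preimage $\iota^{-1}(K \times [-R/2, R/2])$ is compact. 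Thus $\tilde\pi$ is a proper local diffeomorphism onto the simply connected base $B^n_{R/2}$, hence a finite covering map, and each connected component of $\iota^{-1}(C_{R/2})$ restricts to a diffeomorphism onto $B^n_{R/2}$, producing one smooth graph $v_j : B^n_{R/2} \to \R$. The finiteness of the number $q$ of sheets follows because the preimage of a point is compact and discrete.

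Next I would convert the tilt bound into the gradient bound via the identity $|\nabla v_j|^2 = g^2/(1-g^2)$, obtaining $\|\nabla v_j\|_{L^\infty(B^n_{R/2})} \leq c(n) E_M(R)$. Each $v_j$ solves the minimal surface equation
\[\text{div}\Bigl(\frac{\nabla v_j}{\sqrt{1 + |\nabla v_j|^2}}\Bigr) = 0,\]
which in this small-gradient regime is uniformly elliptic and quasi-linear with smooth coefficients. Interior Schauder (or De Giorgi--Nash--Moser) estimates for quasi-linear equations then yield $\|\nabla v_j\|_{C^{1,\alpha}(B^n_{R/2})} \leq c(n) \|\nabla v_j\|_{L^\infty} \leq c(n) E_M(R)$, where the margin needed for interior regularity on the closed cylinder is gained by applying Theorem \ref{thm:eps_reg_tilt} at a slightly larger inner radius (the hypothesis passes to that scale by monotonicity in $r$ of the scale-invariant excess in the regime of interest, or by a direct rescaling). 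Since $|A_M|$ on the graph of $v_j$ is pointwise equivalent to $|D^2 v_j|/(1+|\nabla v_j|^2)^{3/2}$, this also delivers $\sup_{M \cap C_{R/2}} |A_M| \leq c(n) E_M(R)$.

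The only genuinely new ingredient beyond Theorem \ref{thm:eps_reg_tilt} is the covering-map step, and this is where $\Sigma = \emptyset$ enters crucially: it guarantees a bona fide local diffeomorphism over all of $B^n_{R/2}$ rather than merely off a capacity-zero set (in contrast with Theorem \ref{thm:immersion_sing_sheeting}, where branching across $\pi(\Sigma)$ prevents a smooth selection). Once global smooth graphs are in place, the linearity of the final estimate in $E_M(R)$ is standard interior regularity for quasi-linear elliptic equations in the small-gradient regime and requires no further input from stability.
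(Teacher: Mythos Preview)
Your proposal is correct and follows essentially the same route as the paper: apply Theorem \ref{thm:eps_reg_tilt} to get the pointwise tilt bound, use a covering-space argument (proper local diffeomorphism onto the simply connected ball $B^n_{R/2}$) to obtain the smooth graphical decomposition, and finish with standard elliptic regularity for the minimal surface equation. The paper phrases the covering step via transversality and constancy of the intersection index with vertical lines rather than the abstract ``proper local diffeomorphism over a simply connected base is a trivial covering'' fact, but these are the same argument; your formulation is arguably cleaner. One small caveat: your parenthetical ``monotonicity in $r$ of the scale-invariant excess'' is not a valid justification (the tilt excess is not monotone in $r$), but your alternative ``direct rescaling'' suffices, and in any case the paper simply absorbs this into the interior nature of the Schauder/De Giorgi--Nash--Moser step without comment.
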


\begin{oss}
Theorem \ref{thm:immersion_smooth_sheeting} rules out the appearance of a flat branch point when taking a (varifold) limit of smooth stable minimal immersions.
\end{oss}

\begin{oss}[\emph{Sufficiency of $\mathcal{H}^{n-2}(\Sigma)=0$}]
The assumption $\Sigma=\emptyset$ in Theorem \ref{thm:immersion_smooth_sheeting} can be weakened to $\mathcal{H}^{n-2}(\Sigma)=0$. Indeed, under this assumption, $B_{\frac{R}{2}} \setminus \pi(\Sigma)$ is a simply connected open set. This implies (using the conclusions of Theorem \ref{thm:immersion_sing_sheeting}) that $M$ can be written as the union of graphs (not ordered ones) of smooth functions on $B_{\frac{R}{2}} \setminus \pi(\Sigma)$. Then a removal of singularity for the minimal surface PDE (\cite{Sim77}) shows that in fact $\Sigma=\emptyset$. In view of this (in analogy with the earlier discussion on the branched case), a compactness theory for stable minimal immersions with a codimension-$7$ singular set could be obtained\footnote{After the appearance of this paper, Hong--Li--Wang \cite{HLW} carried out a modification of the arguments in Sections \ref{proof_eps_A} and \ref{proof_thm_A} below, obtaining \cite[Corollary 1.3]{HLW}, which asserts the validity of such compactness statement.} upon addressing the natural missing step, in which $M$ is close to a classical cone, rather than to a hyperplane with multiplicity. (Again, \cite[Theorem 1.3]{WicDG} obtained this for a ``multiplicity $2$ class''.)
\end{oss}

If we instead specialise Theorem \ref{thm:immersion_sing_sheeting} to hypersurfaces $M$ that are \emph{embedded} away from $\Sigma$, then by removal of singularities for the minimal surface PDE, used for each function $u_j$, we recover (a quantitative version of) the well-known Schoen--Simon sheeting theorem (\cite[Theorem 1]{SchSim}):

\begin{thm}[sheeting theorem for singular embeddings]
\label{thm:SchSim}
Let $n\geq 2$. Let $M$ be a properly embedded, two-sided stable minimal hypersurface in $C_{2R}  \setminus \Sigma$, where $\Sigma$ is closed in $C_{2R}=B^n_R(0) \times (-R, R)$, with locally finite $\mathcal{H}^{n-2}$-measure, or, more generally, with $\text{cap}_2(\Sigma) =0$. Assume that $\sup_{M \cap C_{2R}} |x_{n+1}|<\frac{R}{2}$ and
\[E_M(R)^2=\frac{1}{R^n}\int_{M\cap C_R} (1-\nu \cdot e_{n+1})^2 \leq k(n),\]
where $k(n)$ is the (positive) dimensional constant in \eqref{eq:dim_constant_1} and $\nu$ is a choice of unit normal to $M$. Then 
\[\overline{M} \cap C_{\frac{R}{2}}=\cup_{j=1}^q \text{graph}(u_j)\]
with $u_j:B_{\frac{R}{2}} \to \R$ smooth, $u_j < u_{j+1}$ for every $j$. In particular, $\overline{M} \cap C_{\frac{R}{2}}$ is smoothly embedded (equivalently, $M$ extends smoothly across $\Sigma$ in $C_{\frac{R}{2}}$), and $\sup_{C_{\frac{R}{2}}\cap M}|A_{M}| \leq c(n) E_M(R)$ for a dimensional constant $c(n)$. 
\end{thm}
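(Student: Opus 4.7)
The plan is to leverage Theorem \ref{thm:immersion_sing_sheeting} and then upgrade the Lipschitz multi-valued graph to a collection of smooth, strictly ordered graphs by invoking the embeddedness hypothesis together with a removable-singularity theorem for the minimal surface PDE.

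First I would apply Theorem \ref{thm:immersion_sing_sheeting} verbatim, since a properly embedded hypersurface is \emph{a fortiori} properly immersed and the remaining hypotheses (height bound, tilt smallness, $2$-capacity hypothesis on $\Sigma$) are inherited. This yields the decomposition
\[\overline{M} \cap C_{R/2} = \bigcup_{j=1}^q \text{graph}(u_j),\]
with $u_j : B^n_{R/2} \to \R$ Lipschitz, $u_j \leq u_{j+1}$, and $\text{Lip}(u_j) \leq c(n) E_M(R)$. The smallness forced by $E_M(R)^2 \leq k(n)$ ensures that the minimal surface equation on each graph is uniformly elliptic with structure constants depending only on $n$.

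Next I would remove the singular set. Letting $\pi: \R^{n+1}\to\R^n$ denote the horizontal projection, on $B^n_{R/2}\setminus\pi(\Sigma)$ each $u_j$ coincides locally with a smooth minimal graph coming from a sheet of the smoothly embedded $M\setminus\Sigma$. Since $\pi$ is $1$-Lipschitz, the hypothesis $\text{cap}_2(\Sigma)=0$ transfers to $\text{cap}_2(\pi(\Sigma))=0$ in $\R^n$. Being Lipschitz and a classical solution of the minimal surface equation off a set of vanishing $2$-capacity, $u_j$ is automatically a weak solution of the minimal surface equation on the full disk $B^n_{R/2}$; combining uniform ellipticity with Simon's removable-singularity theorem \cite{Sim77} then produces a smooth extension of $u_j$ across $\pi(\Sigma)$, so $u_j \in C^\infty(B^n_{R/2})$. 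In particular, $M$ extends smoothly across $\Sigma$ within $C_{R/2}$.

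Having established smoothness of each $u_j$, the strict ordering $u_j < u_{j+1}$ then follows from the strong maximum principle: if $u_j(x_0) = u_{j+1}(x_0)$ at some point, the nonnegative difference $u_{j+1}-u_j$ satisfies a uniformly elliptic linear equation on the connected disk and must vanish identically, so the two sheets coincide as sets; but in the embedded setting coinciding sheets describe a single sheet of $M$, so we may reduce $q$ by one and repeat until strict ordering holds. The final pointwise bound $\sup_{C_{R/2}\cap M}|A_M| \leq c(n) E_M(R)$ is then obtained from standard scaled Schauder estimates for the (uniformly elliptic) minimal surface equation applied to each $u_j$: the small Lipschitz bound $\text{Lip}(u_j)\leq c(n)E_M(R)$ controls the $C^{1,\alpha}$ norm of $u_j$, hence $|D^2 u_j|$, and thereby $|A_M|$ pointwise (with the appropriate $R$-scaling). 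The argument is essentially bookkeeping once Theorem \ref{thm:immersion_sing_sheeting} is in hand; the one delicate point on which the entire reduction hinges is verifying that the $2$-capacity hypothesis on $\Sigma\subset\R^{n+1}$ genuinely passes to $\pi(\Sigma)\subset\R^n$, so that Simon's removable-singularity theorem is applicable.
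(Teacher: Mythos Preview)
Your approach matches the paper's: apply Theorem~\ref{thm:immersion_sing_sheeting}, use embeddedness to see each ordered $u_j$ is individually a smooth solution of the minimal surface equation on $B^n_{R/2}\setminus\pi(\Sigma)$, then invoke Simon's removable-singularity theorem \cite{Sim77} to extend across $\pi(\Sigma)$. Two points are worth flagging.

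First, the step you single out as ``the one delicate point'' is a red herring, and as stated it is not obviously true: projections need not preserve $2$-capacity, and in any case the $2$-capacity in $\R^{n+1}$ (critical dimension $n-1$) and in $\R^n$ (critical dimension $n-2$) measure different things. Fortunately Simon's theorem requires only that $\pi(\Sigma)$ be closed with $\mathcal{H}^{n-1}(\pi(\Sigma))=0$, and this follows immediately: $\text{cap}_2(\Sigma)=0$ in $\R^{n+1}$ gives $\mathcal{H}^{n-1}(\Sigma)=0$, and the $1$-Lipschitz projection then yields $\mathcal{H}^{n-1}(\pi(\Sigma))=0$. This is exactly the route the paper takes.

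Second, the paper reverses your order on the strict inequality: embeddedness together with the transversality forced by $g\leq\frac{1}{2n}$ gives $u_j<u_{j+1}$ on $B^n_{R/2}\setminus\pi(\Sigma)$ \emph{immediately} (each vertical fibre meets $M$ in exactly $q$ distinct points), and only then does one know that each ordered $u_j$ is a smooth solution off $\pi(\Sigma)$. You invoke this fact implicitly when asserting ``each $u_j$ coincides locally with a smooth minimal graph'', so your subsequent maximum-principle argument is essentially redundant (though it does cleanly certify strict ordering at the formerly singular points after extension).
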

We recall that Theorem \ref{thm:SchSim} leads (by fairly standard arguments) to the renowned compactness and regularity theory \cite[Theorems 2 and 3]{SchSim} for stable minimal embedded hypersurfaces that are allowed to possess a singular set of locally finite $\mathcal{H}^{n-2}$-measure. A posteriori, the singular set contains no branch points and in fact has dimension at most $n-7$, it is discrete in the case $n=7$, and empty for $n\leq 6$.

We thus obtain an alternative and hopefully more immediate route to (the main component of) the Schoen--Simon theory. (The approach in \cite{SchSim} involves a partial $q$-valued graph decomposition of the embedding, and a linearisation of the problem, both of which we avoid.)  

\medskip

The impact of Schoen--Simon's compactness, and of Schoen--Simon--Yau's curvature estimates, for developments in analysis and geometry over the last half century, cannot be overstated.

\part{Curvature estimates}
\label{partI}

We will denote by $M$ (and by $M_\ell$, $\ell \in \N$, when considering a sequence) a smooth two-sided properly immersed stable minimal hypersurface in an open set $U\subset \R^{n+1}$. Typically, the open set $U$ will be a ball $B_R^{n+1}(0)$, or the whole of $\R^{n+1}$, or a cylinder of the form $B_R^{n}(0) \times (-R, R)$. In other words $M=\iota(S)$, with $S$ an $n$-dimensional manifold and $\iota:S\to U$ a proper two-sided stable minimal immersion. We recall that the stability condition is the non-negativity of the second variation of the $n$-area, and that this amounts to the validity of 
\[\int_S |A_M|^2 \phi^2 \leq \int_S |\nabla \phi|^2\]
for any $\phi \in C^1_c(S)$, where $S$ is endowed with the pull-back metric from $U$, $\nabla$ is the metric gradient on $S$ and $|A_M|$ the length of the second fundamental form. We note that whenever $\varphi \in C^1_c(U)$, then $\varphi\circ \iota\in C^1_c(S)$ (since the immersion is proper); with a slight abuse of notation, we will write the integrals directly on $M$, with $\int_M |\nabla \varphi|^2$ in place of $\int_S |\nabla (\varphi\circ \iota)|^2$ and the inequality taking the form $\int_M |A_M|^2 \varphi^2 \leq \int_M |\nabla \varphi|^2$.

\section{Proof of Theorem \ref{thm:eps_reg_A}}
\label{proof_eps_A}

In this section we prove Theorem \ref{thm:eps_reg_A}. We recall the well-known Simons identity (\cite{Simons}), for the second fundamental form $A$ of a minimal hypersurface:

\begin{equation}
 \label{eq:Simons}
\frac{1}{2} \Delta |A|^2 = |\nabla A|^2 - |A|^4.
\end{equation}
Clearly, $|\nabla A| \geq \big|\nabla|A|\big|$; in \cite[(1.33)]{SSY} it is shown that the minimality condition implies the following improved inequality, with $c=\frac{2}{n}$:
\begin{equation}
 \label{eq:SSY_ineq}
|\nabla A|^2 \geq (1+c) \big|\nabla |A|\big|^2.
\end{equation}
We will also use the following variant of (\ref{eq:Simons}): as $\frac{1}{2}\Delta |A|^2 = |A| \Delta |A| + |\nabla |A||^2$, we find 
\begin{equation}
 \label{eq:Simons2}
|A| \Delta |A| = |\nabla A|^2- \big|\nabla |A|\big|^2 - |A|^4.
\end{equation}

The following lemma contains the relevant weak (intrinsic) Caccioppoli inequality for the level set truncations of $|A|$:

\begin{lem}
 \label{lem:Caccioppoli_A}
Let $M$ be a properly immersed smooth two-sided stable minimal hypersurface in $U\subset \R^{n+1}$. For any $k\geq 0$ and any $\eta \in C^1_c(U)$ we have
\begin{equation*}
 \begin{aligned}
\int_{\{|A|>k\}} \Bigg(1-\frac{k}{|A|}\Bigg)\,\big|\nabla |A|\big|^2 \, \eta^2 \leq  \frac{1}{c}  \int ((|A|-k)^+)^2 |\nabla \eta|^2 +\\
\frac{k}{c} \int ((|A|-k)^+)^3\, \eta^2 + \frac{2 k^2}{c} \int ((|A|-k)^+)^2\, \eta^2 + \frac{k^3}{c} \int (|A|-k)^+\, \eta^2.
 \end{aligned}
\end{equation*}
\end{lem}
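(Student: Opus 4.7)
The plan is to combine the Simons inequality $|A|\Delta|A| \geq c\,|\nabla|A||^2 - |A|^4$ (obtained from \eqref{eq:Simons2} together with the SSY refinement \eqref{eq:SSY_ineq}, where $c=2/n$) with the stability inequality, testing each with carefully chosen functions built from the truncation $w = (|A|-k)^+$ and the cutoff $\eta$. Writing $v = |A|$, I first multiply the pointwise Simons inequality by $w\eta^2/v$, which is supported on $\{v > k\}$ and bounded there. Since $w/v = 1 - k/v$ on that set, this gives
\[
\Delta v \cdot w\eta^2 \;\geq\; c\,(1-k/v)\,|\nabla v|^2 \,\eta^2 \,-\, v^3 w \,\eta^2
\]
pointwise on $\{v>k\}$, with both sides trivially zero elsewhere.

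Next I integrate over $M$ and integrate by parts the left-hand side, using $\nabla w = \chi_{\{v>k\}}\nabla v$ a.e.\ and the fact that $w\eta^2$ is Lipschitz with compact support. This produces $-\int \eta^2|\nabla w|^2 - 2\int w\eta\,\nabla w\cdot\nabla\eta$, and I then apply the algebraic identity
\[
-\int \eta^2|\nabla w|^2 \,-\, 2\int w\eta\,\nabla w\cdot\nabla\eta \;=\; \int w^2|\nabla\eta|^2 \,-\, \int|\nabla(w\eta)|^2,
\]
which reorganizes the cross-term into a negative complete square. Invoking stability with the Lipschitz test function $\varphi = w\eta$ yields $\int v^2 w^2 \eta^2 \leq \int|\nabla(w\eta)|^2$, so the $-\int|\nabla(w\eta)|^2$ contribution can be replaced by $-\int v^2 w^2\eta^2$. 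Combining the above gives
\[
c\int(1-k/v)\,|\nabla v|^2\, \eta^2 \;\leq\; \int v^3 w\,\eta^2 \,-\, \int v^2 w^2\,\eta^2 \,+\, \int w^2|\nabla\eta|^2.
\]

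The crucial observation is the telescoping identity $v^3 w - v^2 w^2 = v^2 w(v-w) = k\, v^2 w$ on $\{v > k\}$ (trivially $0$ elsewhere), which is exactly the algebraic fact that makes this particular combination of Simons and stability yield the right form with no leftover $\int\eta^2|\nabla w|^2$ on the right. Expanding $k v^2 w = k(w+k)^2 w = k w^3 + 2k^2 w^2 + k^3 w$ on $\{v>k\}$ and dividing by $c$ produces precisely the claimed inequality.

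The main technical obstacle is the set $\{|A|=0\}$, where $v$ is not smooth and the multiplier $w/v$ is formally $0/0$. I would address this in the standard way, replacing $v$ by the smooth positive regularization $v_\varepsilon = \sqrt{|A|^2 + \varepsilon^2}$, carrying through the same argument (the Simons-type identity for $v_\varepsilon$ acquires only lower-order $\varepsilon$-corrections that preserve $|\nabla v_\varepsilon|^2 \leq |\nabla A|^2$), and sending $\varepsilon \downarrow 0$ by dominated convergence on the right-hand side and Fatou on the left.
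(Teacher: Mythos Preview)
Your proposal is correct and takes essentially the same approach as the paper: both combine the stability inequality tested with $(|A|-k)^+\eta$, the Simons identity, and the SSY refinement \eqref{eq:SSY_ineq} via an integration by parts, differing only in the order of operations (you start from the pointwise Simons inequality and invoke stability at the end, while the paper starts from stability and substitutes Simons afterward). Your telescoping identity $v^3w-v^2w^2=kv^2w$ and subsequent expansion are exactly the paper's final rewriting of $k|A|^2(|A|-k)^+$.
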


\begin{oss}
As mentioned above, the integrals are implicitly understood to be on $S$, with $\eta \circ \iota$ in place of $\eta$. For $k=0$ the inequality is $\int  \big|\nabla |A|\big|^2 \, \eta^2 \leq  \frac{1}{c}  \int |A|^2 |\nabla \eta|^2$, which appears as an intermediate step along the proof of \cite[Theorem 1]{SSY}.
\end{oss}

\begin{proof}
We use the stability inequality with the Lipschitz test function $(|A|-k)^+ \eta$, for $k \in [0, \infty)$ and $\eta \in C^1_c(U)$. We note that ${(|A|-k)^+}^2 \in C^1(M) \cap W^{2,\infty}_{\text{loc}}(M)$: indeed, being Lipschitz, its (distributional) gradient is the function $\nabla {(|A|-k)^+}^2 = 2 {(|A|-k)^+} \nabla {(|A|-k)^+}=2 {(|A|-k)^+} \nabla |A|$, which in turn is locally Lipschitz. We find

\begin{equation*}
 \begin{aligned}
\int |A|^2 (|A|-k)^+)^2 \eta^2 \leq \int |\nabla ((|A|-k)^+ \eta)|^2 = \\
\int |\nabla (|A|-k)^+|^2 \eta^2 + 2 \int (|A|-k)^+ \,\eta \,\nabla (|A|-k)^+ \, \nabla \eta + \int  ((|A|-k)^+)^2 |\nabla \eta|^2 \\
=\int |\nabla (|A|-k)^+|^2 \eta^2 + \frac{1}{2} \underbrace{\int \nabla ((|A|-k)^+)^2 \, \nabla \eta^2}_{} + \int  ((|A|-k)^+)^2 |\nabla \eta|^2 
\end{aligned}
\end{equation*}
and integrating by parts the braced term we can continue the equality chain
\begin{equation*}
 \begin{aligned}
=\int |\nabla (|A|-k)^+|^2 \eta^2 - \frac{1}{2} \int_{\{|A|>k\}} \Delta |A|^2 \, \eta^2 + \int_{\{|A|>k\}} \frac{k}{|A|} |A| \Delta |A| \, \eta^2 \\+ \int  ((|A|-k)^+)^2 |\nabla \eta|^2 
\end{aligned}
\end{equation*}
\begin{equation*}
 \begin{aligned}
\underbrace{=}_{(\ref{eq:Simons}), (\ref{eq:Simons2})}\int_{\{|A|>k\}} \big|\nabla |A|\big|^2 \eta^2 + \int_{\{|A|>k\}}  (-|\nabla A|^2 + |A|^4) \,\eta^2 +\\  \int_{\{|A|>k\}} \frac{k}{|A|}\big( |\nabla A|^2- \big|\nabla |A|\big|^2\big) \, \eta^2 - k\int_{\{|A|>k\}} |A|^3 \, \eta^2  + \int  ((|A|-k)^+)^2 |\nabla \eta|^2.
 \end{aligned}
\end{equation*}
The left-most side (of the above chain of inequalities) is expanded as follows: $\int |A|^2 (|A|-k)^+)^2 \eta^2= \int_{\{|A|>k\}} (|A|^4 -2k|A|^3 + k^2 |A|^2) \eta^2$. We thus find
\begin{equation*}
 \begin{aligned}\int_{\{|A|>k\}} \left(1-\frac{k}{|A|}\right)(|\nabla A|^2- \big|\nabla |A|\big|^2)\eta^2 \leq \\
\int  ((|A|-k)^+)^2 |\nabla \eta|^2 + k \int_{\{|A|>k\}} |A|^3 \, \eta^2 -  k^2 \int_{\{|A|>k\}} |A|^2 \, \eta^2 
\end{aligned}
\end{equation*}
and using (\ref{eq:SSY_ineq}) we conclude
\begin{equation}
 \label{eq:Caccioppoli_A}
\int_{\{|A|>k\}} \Bigg(1-\frac{k}{|A|}\Bigg)\,\big|\nabla |A|\big|^2 \, \eta^2 \leq  
\end{equation}
\[\frac{1}{c}  \int \big((|A|-k)^+\big)^2 |\nabla \eta|^2 + \frac{k}{c} \int  |A|^2 (|A|-k)^+\, \eta^2 .\]
The desired inequality follows upon rewriting the last term on the right-hand-side of (\ref{eq:Caccioppoli_A}), on the set ${\{|A|>k\}}$:
\begin{equation}
|A|^2(|A|-k) = \left(|A|-k+k\right)^2 (|A|-k) =
(|A|-k)^3 +2k(|A| -k)^2 + k^2(|A|-k).
\end{equation}
\end{proof}

Lemma \ref{lem:Caccioppoli_A} provides the weak intrinsic Caccioppoli inequality that will lead, through an iteration scheme \`{a} la De Giorgi, to Theorem \ref{thm:eps_reg_A}.

\begin{oss}
Note that both assumption and conclusion in Theorem \ref{thm:eps_reg_A} are scale-invariant. The scale-invariant quantity $\frac{1}{(R)^{n-2}}\int_{M\cap B_{R}(0)} |A|^2$ is uniformly bounded under the Euclidean mass growth hypothesis that we have. Indeed, stability used with a test function $\varphi \in C^1_c(B_{2R}(0))$ with $\varphi \equiv 1$ on $B_R(0)$ and $|\nabla \varphi|\leq \frac{2}{R}$ gives $\int_{B_R(0)}|A|^2 \leq \frac{4}{R^2}\Big(\int_{B_{2R}(0)}1\Big)\leq 2^{n+2} R^{n-2}\Lambda$.
\end{oss}

\begin{oss}
\label{oss:catenoid}
The standard catenoids for $n\geq 3$, for which $\frac{1}{R^{n-2}}\int_{B_R(0)\cap M} |A|^2 \to 0$ as $R\to \infty$, show that stability is essential for Theorem \ref{thm:eps_reg_A}. (On the other hand, the dimensional restriction may not be essential.) 
Catenoids also show that, given $M$ minimal, the relevant ``scale-invariant energy'' $\frac{1}{R^{n-2}}\int_{B_R(0)\cap M} |A|^2$ is not an increasing function of $R$ (unlike in several well-known $\eps$-regularity theorems).
\end{oss}

\begin{proof}[Proof of Theorem \ref{thm:eps_reg_A}]
We will assume $n\geq 3$ (see Remark \ref{oss:A_n2}) and consider the sequences $k_\ell = d-\frac{d}{2^{\ell-1}}$ and $R_\ell = \frac{R}{2}+\frac{R}{2^\ell}$ for $\ell \in \{1, 2, \ldots,\}$ (respectively increasing and decreasing), where $d>0$ is for the moment left undetermined, and will be quantified in terms of $\frac{1}{(2R)^{n-2}}\int_{B_{2R}} |A|^2$. Here and below, when writing integrals on $B_r=B^{n+1}_r(0)$ we implicitly understand that the integration is on $\iota^{-1}(B^{n+1}_r(0))$.

\medskip

Using Lemma \ref{lem:Caccioppoli_A} with $k_{\ell}$ in place of $k$, noting the inclusion $\{|A|>k_{\ell}\} \supset \{|A|>k_{\ell+1}\}$, 
and that on the set $\{|A|>k_{\ell+1}\}$ we have $\left(1-\frac{k_{\ell}}{|A|}\right)^+ \geq  1-\frac{k_{\ell}}{k_{\ell+1}} = \frac{d}{k_{\ell+1} 2^{\ell}} \geq \frac{1}{2^{\ell}}$ we have
\begin{equation*}
 \begin{aligned}
\frac{1}{2^\ell} \int_{\{|A|>k_{\ell+1}\}}  |\nabla |A||^2 \, \eta^2 \leq \frac{1}{c}  \int ((|A|-k_{\ell})^+)^2 |\nabla \eta|^2 + 
 \frac{k_{\ell}}{c} \int ((|A|-k_{\ell})^+)^3\, \eta^2 \\
 +\frac{2 k_{\ell}^2}{c} \int ((|A|-k_{\ell})^+)^2\, \eta^2 + \frac{k_{\ell}^3}{c} \int (|A|-k_{\ell})^+\, \eta^2.
 \end{aligned}
\end{equation*}
Since 
\[|\nabla ((|A|-k_{\ell+1})^+ \eta)|^2 \leq  2\chi_{\{|A|>k_{\ell+1}\}} |\nabla |A||^2 \, \eta^2 + 2((|A|-k_{\ell+1})^+)^2 |\nabla \eta|^2,\]
and $(|A|-k_{\ell+1})^+ \leq (|A|-k_{\ell})^+$, it follows that
\begin{equation}
\label{eq:first_A}
\begin{aligned}
\int  |\nabla ((|A|-k_{\ell+1})^+ \eta)|^2  \leq \left( \frac{2^{\ell+1}}{c}+2\right)  \int ((|A|-k_{\ell})^+)^2 |\nabla \eta|^2 + \\
\frac{k_{\ell}2^{\ell+1}}{c} \int ((|A|-k_{\ell})^+)^3\, \eta^2 + \frac{2 k_{\ell}^2 2^{\ell+1}}{c} \int ((|A|-k_{\ell})^+)^2\, \eta^2 + \\
\frac{k_{\ell}^3 2^{\ell+1}}{c} \int (|A|-k_{\ell})^+\, \eta^2.
\end{aligned}
\end{equation}
We will use the following Michael--Simon inequality (\cite[Theorem 2.1]{MS}, see also \cite{Bre}), to bound from below the left-hand-side of (\ref{eq:first_A}):
\begin{equation}
\label{eq:MichaelSimon_A}
\left(\int \left|(|A|-k_{\ell+1})^+ \eta\right|^{\frac{2n}{n-2}}\right)^{\frac{n-2}{n}} \leq  C_{MS}\int |\nabla ((|A|-k_{\ell+1})^+ \eta)|^2,
\end{equation}
for a dimensional constant $C_{MS}$ explicitly given by $C_{MS} = \left(\frac{2(n-1) 4^{n+1}}{(n-2)\omega_n^{1/n}}\right)^2$, where $\omega_n$ is the $n$-volume of the unit ball in $\R^n$.

We define, for each $\ell$, the function $\eta_\ell$ to be identically $1$ on $B_{R_{\ell+1}}$, with $\text{spt}\eta_\ell = B_{R_{\ell}}$ and $|\nabla \eta_\ell|\leq  \frac{2}{|R_{\ell}-R_{\ell+1}|} = \frac{ 2^{\ell+2}}{R}$, and with $0\leq \eta_\ell \leq 1$. From (\ref{eq:first_A}) and (\ref{eq:MichaelSimon_A}), making the choice $\eta=\eta_\ell$, we find (using $k_\ell \leq d$ on the right-hand-side of (\ref{eq:first_A}))
\begin{equation}
\label{eq:second_A}
\begin{aligned}
\frac{1}{C_{MS}}\left(\int_{B_{R_{\ell+1}}} ((|A|-k_{\ell+1})^+ )^{\frac{2n}{n-2}}\right)^{\frac{n-2}{n}} \leq \\
\left( \frac{2^{\ell+1}}{c}+2\right) \left(\frac{4^{\ell+2}}{R^2}\right) \int_{B_{R_{\ell}}} ((|A|-k_{\ell})^+)^2  + \frac{d 2^{\ell+1}}{c} \int_{B_{R_{\ell}}} ((|A|-k_{\ell})^+)^3 \\
+\frac{2 d^2 2^{\ell+1}}{c} \int_{B_{R_{\ell}}} ((|A|-k_{\ell})^+)^2  +\frac{d^3 2^{\ell+1}}{c} \int_{B_{R_{\ell}}} (|A|-k_{\ell})^+ .
\end{aligned}
\end{equation}
For the right-hand-side of (\ref{eq:second_A}) we use H\"older's inequality three times, first with $1 = \frac{n-2}{n} + \frac{2}{n}$,
\[\int_{B_{R_{\ell}}} ((|A|-k_\ell)^+ )^{2} \leq \left(\int_{B_{R_{\ell}}} ((|A|-k_\ell)^+ )^{\frac{2n}{n-2}}\right)^{\frac{n-2}{n}}  \left(\int \chi_{\{|A|>k_\ell\}\cap B_{R_{\ell}}}\right)^{\frac{2}{n}},\]
then with $\frac{n-2}{2n}+ \frac{n+2}{2n}=1$,
\[\int_{B_{R_{\ell}}} (|A|-k_\ell)^+  \leq \left(\int_{B_{R_{\ell}}} ((|A|-k_\ell)^+ )^{\frac{2n}{n-2}}\right)^{\frac{n-2}{2n}}  \left(\int \chi_{\{|A|>k_\ell\}\cap B_{R_{\ell}}}\right)^{\frac{n+2}{2n}},\]
and finally (this is possible for $n\leq 6$) with exponents $\frac{2n}{3(n-2)}$ and $\frac{2n}{6-n}$ (with $1= \frac{3n-6}{2n} + \frac{6-n}{2n}$), where in the case $n=6$ the two exponents are just $1$ and $\infty$ (hence for $n=6$ the second factor on the right--hand-side in the following inequality is just $1$)
\[\int_{B_{R_{\ell}}} ((|A|-k_\ell)^+)^3  \leq \left(\int_{B_{R_{\ell}}} ((|A|-k_\ell)^+ )^{\frac{2n}{n-2}}\right)^{\frac{3(n-2)}{2n}}  \left(\int \chi_{\{|A|>k_\ell\}\cap B_{R_{\ell}}}\right)^{\frac{6-n}{2n}}.\]
We will use the notation
\[S_\ell=\int_{B_{R_\ell}} ((|A|-k_\ell)^+ )^{\frac{2n}{n-2}},\]
and aim for a superlinear decay estimate for this quantity as $\ell \to \infty$.

We note that $\mathcal{H}^n\left(\{|A|>k_\ell\} \cap B_{R_{\ell}}\right)$
can be bounded as follows, for $\ell \geq 2$, using Markov's inequality and the fact that on the set $\{|A|>k_\ell\}$ we have $(|A|-k_{\ell-1})^+ \geq k_\ell -k_{\ell-1} = \frac{d}{2^{\ell-1}}$: 

\[\{|A|>k_\ell\} \cap B_{R_{\ell}}\subset \left\{((|A|-k_{\ell-1})^+)^{\frac{2n}{n-2}} \geq \frac{d^{\frac{2n}{n-2}}}{(2^{\frac{2n}{n-2}})^{\ell-1}}\right\} \cap B_{R_{\ell}} \Rightarrow \]
\begin{equation}
\label{eq:Markov}\mathcal{H}^n\left(\{|A|>k_\ell\} \cap B_{R_{{\ell}}}\right) \leq \frac{(2^{\frac{2n}{n-2}})^{\ell-1}}{d^{\frac{2n}{n-2}}}  \int_{B_{R_{\ell}}} ((|A|-k_{\ell-1})^+)^{\frac{2n}{n-2}} \leq   \frac{(2^{\frac{2n}{n-2}})^{\ell-1}}{d^{\frac{2n}{n-2}}}  S_{\ell-1}.
\end{equation}
The above three instances of H\"older's inequality, together with (\ref{eq:Markov}), give
\[\int_{B_{R_{\ell}}} ((|A|-k_\ell)^+ )^{2} \leq S_\ell^{\frac{n-2}{n}} \left(\frac{(2^{\frac{2n}{n-2}})^{\ell-1}}{d^{\frac{2n}{n-2}}}  S_{\ell-1} \right)^{\frac{2}{n}},\]
\[\int_{B_{R_{\ell}}} (|A|-k_\ell)^+  \leq S_\ell^{\frac{n-2}{2n}}  \left(\frac{(2^{\frac{2n}{n-2}})^{\ell-1}}{d^{\frac{2n}{n-2}}}  S_{\ell-1} \right)^{\frac{n+2}{2n}},\]
\[\int_{B_{R_{\ell}}} ((|A|-k_\ell)^+)^3  \leq S_\ell^{\frac{3(n-2)}{2n}}  \left(\frac{(2^{\frac{2n}{n-2}})^{\ell-1}}{d^{\frac{2n}{n-2}}}  S_{\ell-1} \right)^{\frac{6-n}{2n}}.\]

With these we bound from above the right-hand-side of (\ref{eq:second_A}) and obtain 

\[\frac{1}{C_{MS}} S_{\ell+1}^{\frac{n-2}{n}} \leq \left(\left( \frac{2^{\ell+1}}{c}+2\right) \left(\frac{4^{\ell+2}}{R^2}\right) + \frac{2 d^2 2^{\ell+1}}{c}\right) S_{\ell}^{\frac{n-2}{n}} \left(\frac{(2^{\frac{4}{n-2}})^{\ell-1}}{d^{\frac{4}{n-2}}}\right)  S_{\ell-1}^{\frac{2}{n}}  \]

\[+\frac{d 2^{\ell+1}}{c} S_{\ell}^{\frac{3n-6}{2n}} \left(\frac{(2^{\frac{6-n}{n-2}})^{\ell-1}}{d^{\frac{6-n}{n-2}}} \right)   S_{\ell-1}^{\frac{6-n}{2n}} + \frac{d^3 2^{\ell+1}}{c}  S_{\ell}^{\frac{n-2}{2n}} \left(\frac{(2^{\frac{n+2}{n-2}})^{\ell-1}}{d^{\frac{n+2}{n-2}}} \right)  S_{\ell-1}^{\frac{n+2}{2n}} ,\]
for every $\ell \geq 2$. Noting that $S_{\ell}\leq S_{\ell-1}$ by definition, and that $2-\frac{4}{n-2} = 1-\frac{6-n}{n-2}=3-\frac{n+2}{n-2} = \frac{2(n-4)}{n-2}$, the last inequality implies (using $n=\frac{2}{c}$)
\[S_{\ell+1}^{\frac{n-2}{n}} \leq (16n C_{MS})\, C^\ell \left(\frac{1}{R^2 d^{\frac{4}{n-2}}} + d^{\frac{2(n-4)}{n-2}}\right) S_{\ell-1},\]
where $C$ is a dimensional constant that can be explicitly taken (using rough estimates for the constants appearing above) to be $C=2^{\frac{3n-2}{n-2}}$.

Letting $d=\frac{1}{R}$ we find $S_{\ell+1}^{\frac{n-2}{n}} \leq 32n C_{MS} \, C^\ell  \frac{1}{R^{\frac{2(n-4)}{n-2}}} S_{\ell-1}$ and hence arrive at the decay relation ($\ell \geq 2$)
\[S_{\ell+1} \leq (32n C_{MS})^{\frac{n}{n-2}} \frac{\tilde{C}^\ell}{R^{\frac{2n(n-4)}{(n-2)^2}}} S_{\ell-1}^{1+\frac{2}{n-2}},\]
where the dimensional constant $\tilde{C}=C^{\frac{n}{n-2}}$ can be explicitly taken to be $\tilde{C}=2^{\frac{n(3n-2)}{(n-2)^2}}$.
This relation forces $S_{\ell} \to 0$, as long as $S_1$ is sufficiently small, by Lemma \ref{lem:analysis1}; precisely, if $S_1 \leq \frac{R^{\frac{n(n-4)}{(n-2)}}}{(32n C_{MS})^{\frac{n}{2}}\tilde{C}^{\frac{n(n-2)}{2}}}$. (We use the relation above with $\ell$ even; the sequence $S_{\ell}$ is decreasing by definition, hence it suffices to prove the convergence to $0$ for the subsequence of odd indices. Setting $2j=\ell$ and $T_j=S_{2j-1}$ for $j\geq 1$, we obtain the recursive relation $T_{j+1} \leq \frac{(32n C_{MS})^{\frac{n}{n-2}}}{R^{\frac{2n(n-4)}{(n-2)^2}}} \tilde{C}^{2j} T_{j}^{1+\frac{2}{n-2}}$ and Lemma \ref{lem:analysis1} gives $T_j \to 0$ if $T_1 =S_1$ is as specified above.)
The smallness assumption on $S_1$ can be written as follows, for the associated scale-invariant quantity (recall that $R_1=R$ and $k_1=0$ in the definition of $S_1$):
\begin{equation}
\label{eq:smallness_S_1}
 \frac{1}{R^{n-\frac{2n}{n-2}}} \int_{B_R} |A|^{\frac{2n}{n-2}}\leq \frac{1}{(32n C_{MS})^{\frac{n}{2}}\tilde{C}^{\frac{n(n-2)}{2}}}=\frac{1}{(32n C_{MS})^{\frac{n}{2}}\,2^{\frac{n^2(3n-2)}{2(n-2)}}}.
\end{equation}
Under the condition \eqref{eq:smallness_S_1}, the convergence $S_\ell \to 0$ obtained implies that $|A|\leq \frac{1}{R}$ a.e.~on $M\cap B_{\frac{R}{2}}$ (by our choice $d=1$), and thus everywhere (by smoothness of $M$). 

We next check that there exists $\epsilon_0>0$ such that \eqref{eq:smallness_S_1} is implied the hypotheses of Theorem \ref{thm:eps_reg_A}. Lemma \ref{lem:Caccioppoli_A}, used with $k=0$, gives $\int_{B_{2R}}|\nabla (|A|\psi)|^2 \leq \big(\frac{2}{c}+2\big) \int_{B_{2R}} |A|^2 |\nabla \psi|^2$ for $\psi \in C^1_c(B_{2R})$. Combining this with the Michael-Sobolev inequality, which gives $\left(\int_{B_{2R}} (|A|\psi)^{\frac{2n}{n-2}}\right)^{\frac{n-2}{n}} \leq C_{MS}  \int_{B_{2R}}|\nabla (|A|\psi)|^2$, choosing $\psi$ to be identically $1$ in $B_R$ and with $|\nabla \psi|\leq \frac{2}{R}$, we obtain (using $c=\frac{2}{n}$)
\begin{equation}
 \label{eq:higher_integrability}
\int_{B_R} |A|^{\frac{2n}{n-2}} \leq C_{MS}^{\frac{n}{n-2}}\left(\frac{4(n+2)}{R^2}\int_{B_{2R}}|A|^2\right)^{\frac{n}{n-2}} ,
\end{equation}
that is, the following inequality holds between the two relevant scale-invariant quantities:
\[\frac{1}{R^{n-\frac{2n}{n-2}}} \int_{B_R} |A|^{\frac{2n}{n-2}} \leq  (4(n+2)\, 2^{n-2}  C_{MS})^{\frac{n}{n-2}}  \left(\frac{1}{(2R)^{n-2}} \int_{B_{2R}}|A|^2\right)^{\frac{n}{n-2}}.\]

We have thus established the first conclusion of Theorem \ref{thm:eps_reg_A}, that is, there exists a dimensional $\epsilon_0>0$ for which $|A|\leq \frac{1}{R}$ on $M\cap B_{\frac{R}{2}}$. Explicitly, this can be quantified from the above by requiring $\Big(\epsilon_0\, 4(n+2)\, 2^{n-2}  C_{MS}\Big)^{\frac{n}{n-2}}\leq \frac{1}{(32n C_{MS})^{\frac{n}{2}}\,2^{\frac{n^2(3n-2)}{2(n-2)}}}$, that is, one can take
\begin{equation}
\label{eq:eps_0_quant}
\epsilon_0= \frac{1}{C_{MS}^{1+\frac{(n-2)}{2}} \, 4(n+2)\, 2^{n-2}  \,(32n)^{\frac{(n-2)}{2}}\,2^{\frac{n(3n-2)}{2}}}.
\end{equation}

To see the second assertion of Theorem \ref{thm:eps_reg_A}, we exploit the freedom on $d$.  
We choose $d=\frac{1}{mR}$ for $m \in [1,\infty)$, and the conclusion $S_\ell \to 0$, i.e.~$|A|\leq \frac{1}{mR}$ on $B_{\frac{R}{2}}$, follows if $S_1$ is suitably small. Indeed, the decay relation becomes (for $\ell \geq 2$ and for an explicit dimensional constant $C$)
\[S_{\ell+1} \leq \Bigg( m^{\frac{4}{n-2}}+\frac{1}{m^{\frac{2(n-4)}{n-2}}} \Bigg)^{\frac{n}{n-2}}\frac{C^\ell}{R^{\frac{2n(n-4)}{(n-2)^2}}} S_{\ell-1}^{1+\frac{2}{n-2}} \leq 2^{\frac{n}{n-2}} m^{\frac{4n}{(n-2)^2}}\frac{C^\ell}{R^{\frac{2n(n-4)}{(n-2)^2}}} S_{\ell-1}^{1+\frac{2}{n-2}}.\]
Hence if $S_1 \leq\frac{R^{\frac{n(n-4)}{(n-2)}}}{{m^{\frac{2n}{(n-2)}}}\overline{C}}$ (for an explicit dimensional constant $\overline{C}$) then $S_\ell \to 0$, that is, $|A|\leq \frac{1}{mR}$ on $M\cap B_{\frac{R}{2}}$. With the same considerations given for the case $m=1$ above, we have that the smallness requirement on $S_1$ is implied by our hypotheses, as long as $\frac{1}{(2R)^2}\int_{B_{2R}}|A|^2$ is sufficiently small. The explicit relations that we have obtained show, in fact, that, for a (explicit) dimensional constant $c(n)$,
\[R \sup_{M\cap B_{\frac{R}{2}}} |A| \leq c(n) \Big( \int_{B_{2R}} |A|^2\Big)^{\frac{1}{2}}.\] 
\end{proof}

\begin{oss}
\label{oss:CI_A}
De Giorgi \cite{DG} exploits the Caccioppoli inequality $\int_{\{u>k\} \cap B^n_{\rho}(p)} |D u|^2  \leq \frac{c(n)}{(r-\rho)^2} \int_{\{u>k\} \cap B^n_r(p)} {(u-k)^+}^2$ (for any $k$, with $\rho<r<R$) to prove his theorem, for a weak solution $u:B^n_R(p)\to \R$ of a linear PDE in divergence form with $L^\infty$ strictly elliptic coefficients. In Lemma \ref{lem:Caccioppoli_A} we have an \emph{intrinsic} Caccioppoli inequality on $M$, when $k=0$. The multiplicative factor $\big(1-\frac{k}{|A|} \big)^+$ appears on the left-hand-side for $k>0$: as shown, this does not disturb the iterative scheme. Extra terms (that weaken the inequality further, when comparing to the classical case) appear on the right-hand-side when $k>0$. These terms involve $L^p$-norms of the truncations up to $p=3$, with multiplicative factors $k^{4-p}$, which influence the dependence on $d$ in the decay relation. The smallness requirement in Theorem \ref{thm:eps_reg_A} is due\footnote{As observed explicitly in Appendix \ref{n=3}, smallness is only needed for $n \in \{4, 5, 6\}$.} to these extra terms (which also force the dimensional bound $n\leq 6$).

While in \cite{DG} the classical Caccioppoli inequality for $(u-k)^+$ follows from the linearity of the PDE, in our case stability provides sufficient control on the non-linearity of (\ref{eq:Simons}), leading to the weak intrinsic Caccioppoli inequality. (In the absence of stability Theorem \ref{thm:eps_reg_A} fails, as pointed out in Remark \ref{oss:catenoid}.)
\end{oss} 
 
\begin{oss}[$n=2$]
\label{oss:A_n2}
We proved Theorem \ref{thm:eps_reg_A} for $n \neq 2$. The proof adapts to treat the case $n=2$ by using the Michael--Simon inequality to embed $L^3$ into $W^{1,\frac{6}{5}}$ and the H\"older inequality to bound $\int |\nabla \big((|A|-{k_\ell})^+ \eta_\ell \big)|^{\frac{6}{5}}$ by means of the product $\Big(\int |\nabla \big((|A|-{k_\ell})^+ \eta_\ell \big)|^{2}\Big)^{\frac{3}{5}} \mathcal{H}^2\Big(M \cap B_{R_\ell} \Big)^{\frac{2}{5}}$. We do not carry out the iteration explicitly, also in view of the fact that (as mentioned in the introduction) curvature estimates for $n=2$ admit a simple treatment.
\end{oss}

\section{Proof of Theorems \ref{thm:Bernstein6} and \ref{thm:curv_est_6}}
\label{proof_thm_A}

The proof of Theorem \ref{thm:curv_est_6} is reduced to the analysis of scenarios in which Theorem \ref{thm:eps_reg_A} is applicable. The first scenario handles the case in which $M$ (properly immersed smooth two-sided and stable) is weakly close (i.e.~as a varifold) to a hyperplane with multiplicity; the second one handles the case in which $M$ is weakly close to a classical cone; the third one handles the case in which $M$ is weakly close to a (finite) union of hyperplanes with multiplicity. In all scenarios the conclusion is that $M$ must be a smooth perturbation (as an immersion) of the cone in question (which is a posteriori always a union of hyperplanes with multiplicity).

\subsection{Closeness to a hyperplane}
\label{scenario1}

Let $M$ be a properly immersed two-sided stable minimal hypersurface that is weakly close to a hyperplane with multiplicity (as varifolds). Upon rotating coordinates, we assume that the hyperplane is $\{x_{n+1}=0\}$. We recall the following standard inequality (implied by minimality, via the first variation formula, with a suitable choice of test function, see \cite[Section 22]{SimonNotes} and Remark \ref{oss:height_excess} below)
\[\Big(\frac{1}{3R}\Big)^n\int_{B_{3R}^{n+1} \cap M} |\nabla x_{n+1}|^2  \leq C(n)\Big(\frac{2}{7R}\Big)^{n+2}\int_{B_{\frac{7R}{2}}^{n+1} \cap M} |x_{n+1}|^2 ,\]
and $|\nabla x_{n+1}| = |\text{proj}_{TM}(e_{n+1})| = \sqrt{1-(\nu\cdot e_{n+1})^2}$. This says that the $L^2$ height-excess controls the $L^2$ tilt-excess linearly (both excesses defined in a scale-invariant fashion). Moreover, Schoen's inequality (\cite{Sch}, \cite[Lemma 1]{SchSim}, see also \eqref{eq:Schoen_ineq_k} below taken with $k=0$) gives
\[\frac{1}{(2R)^{n-2}}\int_{B_{2R}^{n+1} \cap M} |A|^2   \leq  2n \Big(\frac{1}{3R}\Big)^n\int_{B_{3R}^{n+1} \cap M} |\nabla x_{n+1}|^2,\]
where the scale-invariant tilt excess appears on the right-hand-side. Here and below, domains of the type $D \cap M$ for $D$ open, are implicitly understood to be the inverse image of $D$ via the immersion that gives $M$ (we will use this with $M$ belonging to a sequence of immersed hypersurfaces).

When $\hat{M}_j$ converge (as varifolds for $j\to \infty$) in $B_{4R}^{n+1}$ to $\{x_{n+1}=0\}$ with multiplicity, by the monotonicity formula we have that $\sup_{\hat{M}_j \cap B_{\frac{7R}{2}}^{n+1}}|x_{n+1}| \to 0$, so we conclude that for all sufficiently large $j$ the quantity $\frac{1}{(2R)^{n-2}}\int_{B_{2R}^{n+1} \cap \hat{M}_j} |A_{\hat{M}_j}|^2$ is at most $\epsilon_0$ and Theorem \ref{thm:eps_reg_A} applies. As immediate consequence, we have:

\begin{lem}
\label{lem:A_sheeting_1}
Let $\hat{M}_j$ be a sequence of smooth properly immersed two-sided stable minimal hypersurfaces in $B^{n+1}_{4R}(0)$, $n\leq 6$, such that $\hat{M}_j$ converge (as varifolds) to $q |P|$ as $j \to \infty$, where $P$ is a hyperplane and $q\in \N$. Then $\sup_{B^{n+1}_{\frac{R}{2}}(0) \cap \hat{M}_j} |A_{\hat{M}_j}|  \to 0$ as $j \to \infty$, and $\hat{M}_j$ converge smoothly to $P$ in $B^{n+1}_{\frac{R}{2}}(0)$ (as immersions, with the limit being an immersion with $q$ connected components, each covering $P$ once).
\end{lem}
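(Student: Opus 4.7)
After rotating coordinates so that $P=\{x_{n+1}=0\}$, the overall strategy is to verify the hypothesis of Theorem \ref{thm:eps_reg_A} on, say, $B_{2R}^{n+1}(0)$ for all sufficiently large $j$, then invoke the quantitative bound $\sup_{B_{R/2}^{n+1}\cap \hat M_j}|A_{\hat M_j}|\leq c(n)\Big(\frac{1}{(2R)^{n-2}}\int_{B_{2R}^{n+1}\cap\hat M_j}|A_{\hat M_j}|^2\Big)^{1/2}$ and deduce both assertions of the lemma from it.

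First I would establish uniform height decay: $\sup_{\hat M_j\cap B_{7R/2}^{n+1}}|x_{n+1}|\to 0$. Since $\hat M_j\to q|P|$ as varifolds in $B_{4R}^{n+1}$, in particular the support of every varifold limit point of $\hat M_j$ lies in $P$; combined with the monotonicity formula (which forces a lower density bound at any accumulation point), this prevents points of $\hat M_j$ inside any ball compactly contained in $B_{4R}^{n+1}$ from escaping an arbitrarily small neighbourhood of $P$ for large $j$. (Equivalently: if the sup did not tend to zero, a diagonal subsequence would yield a varifold limit point with support not contained in $P$, contradicting the varifold convergence.) The mass $\mathcal H^n(\hat M_j\cap B_{7R/2}^{n+1})$ is uniformly bounded by varifold convergence.

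Next I apply the two inequalities quoted just above the statement. The height-excess to tilt-excess inequality yields
\[\Big(\tfrac{1}{3R}\Big)^n\int_{B_{3R}^{n+1}\cap\hat M_j}|\nabla x_{n+1}|^2\leq C(n)\Big(\tfrac{2}{7R}\Big)^{n+2}\int_{B_{7R/2}^{n+1}\cap\hat M_j}|x_{n+1}|^2\leq C'(n,R)\,\mathcal H^n(\hat M_j\cap B_{7R/2}^{n+1})\sup_{\hat M_j\cap B_{7R/2}^{n+1}}|x_{n+1}|^2,\]
whose right-hand side tends to $0$ by the previous step. Then Schoen's inequality $\frac{1}{(2R)^{n-2}}\int_{B_{2R}^{n+1}\cap\hat M_j}|A_{\hat M_j}|^2\leq 2n\big(\tfrac{1}{3R}\big)^n\int_{B_{3R}^{n+1}\cap\hat M_j}|\nabla x_{n+1}|^2$ shows that the scale-invariant $L^2$ curvature quantity tends to $0$. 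In particular, for all large $j$ it is $\leq\epsilon_0$, and Theorem \ref{thm:eps_reg_A} delivers $\sup_{B_{R/2}^{n+1}\cap\hat M_j}|A_{\hat M_j}|\leq c(n)E_j$ with $E_j\to 0$, which is the first conclusion.

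Finally, the smooth graphical/immersion structure follows from the uniform curvature bound by standard arguments. The bound $|A_{\hat M_j}|\to 0$, together with the height bound and minimality, implies (via the inverse function theorem applied in tubular neighbourhoods of $P$, or equivalently via the standard local graph decomposition for minimal hypersurfaces with small curvature and small height) that each connected component of $\hat M_j\cap B_{R/2}^{n+1}$ is, for $j$ large, the graph over an open subset of $P$ of a smooth function with $C^2$ norm tending to $0$. Elliptic (minimal surface) estimates upgrade the convergence of each such sheet to smooth convergence on compact subsets. Counting mass, the total multiplicity covering any ball in $P\cap B_{R/2}^{n+1}$ equals $q$; combined with connectedness of $P$ this forces exactly $q$ components in the limit, each covering $P$ once, which gives the second conclusion. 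The main technical point to be careful about is justifying the uniform height bound purely from varifold convergence; once that is done, everything else is a direct plug-in of Theorem \ref{thm:eps_reg_A} and standard elliptic regularity.
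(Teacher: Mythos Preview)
Your proposal is correct and follows essentially the same route as the paper: rotate so $P=\{x_{n+1}=0\}$, use the monotonicity formula to get $\sup_{\hat M_j\cap B_{7R/2}}|x_{n+1}|\to 0$, chain the height-to-tilt inequality with Schoen's inequality to drive the scale-invariant $L^2$ curvature below $\epsilon_0$, and apply Theorem~\ref{thm:eps_reg_A}. The paper records the final graphical decomposition and smooth convergence as an ``immediate consequence'' of the curvature bound, which is exactly what you spell out in your last paragraph.
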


\subsection{Closeness to a classical cone}
\label{scenario_class_cone}

Let $C$ be a classical cone, i.e.~a sum of half-hyperplanes all intersecting at a given $(n-1)$-dimensional subspace, $C=\sum_{i=1}^N q_i | H_i |$ with $q_i \in \N$, $H_i$ a half-hyperplane whose boundary is the given $(n-1)$-dimensional subspace. Without loss of generality we assume that the $(n-1)$-dimensional subspace is the span of $\{\textbf{e}_3, \ldots, \textbf{e}_{n+1}\}$. For $\tau>0$, let $C_\tau$ denote the cylinder $\{x_1^2+x_2^2 <\tau^2\}$. 

\begin{lem}
\label{lem:small_A^2_classical_cone}
Let $\hat{M}_j$ be a sequence of smooth properly immersed two-sided stable minimal hypersurfaces in $B_{4R}$. Assume that $\hat{M}_j$ converge (as varifolds in $B_{4R}$) to $C$ as $j \to \infty$, where $C$ is a classical cone as above. Then $\int_{B_{2R} \cap \hat{M}_j}|A_{\hat{M}_j}|^2 \to 0$ as $j \to \infty$.
\end{lem}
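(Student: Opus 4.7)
The plan is to split the integration domain into a region at distance greater than $\tau$ from the spine $S_0:=\{x_1=x_2=0\}$ of $C$, where Lemma \ref{lem:A_sheeting_1} applies and yields smooth convergence with uniform smallness of $|A_{\hat M_j}|$, and a region inside the cylinder $C_\tau$, where the small $n$-area of $C$ near the spine, combined with H\"older's inequality and the higher-integrability estimate \eqref{eq:higher_integrability}, gives a quantitative bound on $\int|A_{\hat M_j}|^2$. The conclusion then follows by sending $\tau\to 0^+$.

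First I would fix $\tau>0$ and show $\int_{(B_{2R}\setminus C_\tau)\cap\hat M_j}|A_{\hat M_j}|^2\to 0$. At each $x_0\in (C\setminus S_0)\cap\overline{B_{2R}}$, say $x_0\in H_{i_0}$, I choose $\rho>0$ of order $c(C)\,\tau$ small enough that $B_{4\rho}(x_0)\subset B_{4R}$, $B_{4\rho}(x_0)\cap S_0=\emptyset$ and $B_{4\rho}(x_0)\cap H_i=\emptyset$ for $i\neq i_0$. Then, in $B_{4\rho}(x_0)$, the cone $C$ reduces to the flat hyperplane piece $q_{i_0}|H_{i_0}\cap B_{4\rho}(x_0)|$, and varifold convergence gives $\hat M_j\to q_{i_0}|H_{i_0}\cap B_{4\rho}(x_0)|$ in $B_{4\rho}(x_0)$. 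Lemma \ref{lem:A_sheeting_1}, applied in $B_{4\rho}(x_0)$ (after the standard translation and rescaling), then yields $\sup_{B_{\rho/2}(x_0)\cap\hat M_j}|A_{\hat M_j}|\to 0$. A finite subcovering of the compact set $(C\cap\overline{B_{2R}})\setminus C_\tau$ by such balls $B_{\rho/2}(x_0)$, combined with Hausdorff convergence $\mathrm{spt}\,\hat M_j\to\mathrm{spt}\,C$ (a standard consequence of varifold convergence and the monotonicity formula), covers $(B_{2R}\setminus C_\tau)\cap\hat M_j$ for all large $j$. Hence $\sup_{(B_{2R}\setminus C_\tau)\cap\hat M_j}|A_{\hat M_j}|\to 0$, and the uniform mass bound on $\hat M_j$ gives $\int_{(B_{2R}\setminus C_\tau)\cap\hat M_j}|A_{\hat M_j}|^2\to 0$.

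For the region inside $C_\tau$, stability tested with $\psi\in C^1_c(B_{4R})$ that is $1$ on $B_{3R}$ and has $|\nabla\psi|\leq 2/R$ gives $\int_{B_{3R}\cap\hat M_j}|A|^2\leq C(n)R^{-2}\mathcal{H}^n(\hat M_j\cap B_{4R})\leq C(n,\Lambda)R^{n-2}$, with $\Lambda$ a uniform mass density bound coming from varifold convergence. Combining this with a version of \eqref{eq:higher_integrability}, obtained by a harmless rescaling of the ratio of radii (cut-off equal to $1$ on $B_{2R}$ and supported in $B_{3R}$), yields $\int_{B_{2R}\cap\hat M_j}|A|^{\frac{2n}{n-2}}\leq C(n,R,\Lambda)$, uniformly in $j$. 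Parametrising each $H_i$ over $[0,\infty)\times S_0$ shows $\mathcal{H}^n(C\cap B_{2R}\cap C_\tau)\leq C(n,\{q_i\},N)\,\tau R^{n-1}$; since $\mathcal{H}^n(C\cap\partial(B_{2R}\cap C_\tau))=0$, the bound transfers to $\hat M_j$ for $j$ large. H\"older's inequality then gives
\begin{equation*}
\int_{B_{2R}\cap C_\tau\cap\hat M_j}|A|^2 \leq \Big(\int_{B_{2R}\cap\hat M_j}|A|^{\frac{2n}{n-2}}\Big)^{\frac{n-2}{n}}\mathcal{H}^n(B_{2R}\cap C_\tau\cap\hat M_j)^{\frac{2}{n}}\leq C(n,R,\{q_i\})\,\tau^{\frac{2}{n}}.
\end{equation*}
Combining with the first step, $\limsup_{j\to\infty}\int_{B_{2R}\cap\hat M_j}|A_{\hat M_j}|^2\leq C\tau^{2/n}$ for every $\tau>0$, and sending $\tau\to 0^+$ concludes. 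The main obstacle, compared with the hyperplane scenario of Subsection \ref{scenario1}, is the absence of a single reference direction to feed into a Schoen-type inequality; the above argument sidesteps this by exploiting the codimension-$2$ structure of the spine (the $O(\tau)$ mass of $C$ inside $C_\tau$) together with the strictly-better-than-$L^2$ integrability of $|A_{\hat M_j}|$.
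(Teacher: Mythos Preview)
Your proof is correct and follows essentially the same approach as the paper: split into the region away from the spine (where Lemma~\ref{lem:A_sheeting_1} gives smooth convergence and hence $\int|A|^2\to 0$) and the region inside $C_\tau$ (where H\"older's inequality, the higher integrability \eqref{eq:higher_integrability}, and a stability bound on $\int|A|^2$ reduce matters to the $O(\tau)$ area of $\hat M_j$ near the spine). The only minor difference is how you obtain the area bound inside $C_\tau$: you bound $\|C\|(B_{2R}\cap C_\tau)$ directly and transfer via weak convergence of measures, whereas the paper covers $C_\tau\cap B_1$ by $O(\tau^{-(n-1)})$ balls of radius $\tau$ and invokes the monotonicity formula on each; both yield the same $O(\tau)$ bound and the same $\tau^{2/n}$ control after H\"older.
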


\begin{proof}
By scale-invariance we may take $4R=2$. By Lemma \ref{lem:A_sheeting_1}, for $\tau>0$ we have that $\hat{M}_j$ converge strongly to $C$ in $B_2 \setminus C_{\frac{\tau}{2}}$.  In particular, given $\tau>0$, we have $\int_{\big(B_1 \setminus C_\tau\big) \cap \hat{M}_j}|A_{\hat{M}_j}|^2  \to 0$ as $j\to \infty$.
Further, 
\begin{equation*}
\begin{aligned}
\int_{\hat{M}_j \cap C_\tau \cap B_1} |A_{\hat{M}_j}|^2 \leq \left(\int_{\hat{M}_j\cap C_\tau \cap B_1} |A_{\hat{M}_j}|^{\frac{2n}{n-2}}\right)^{\frac{n-2}{n}} \left(\int_{\hat{M}_j \cap C_\tau \cap B_1} 1 \right)^{\frac{2}{n}} \\
\leq \left(\int_{\hat{M}_j \cap B_1} |A_{\hat{M}_j}|^{\frac{2n}{n-2}}\right)^{\frac{n-2}{n}} \mathcal{H}^n(\hat{M}_j\cap C_\tau \cap B_1)^{\frac{2}{n}}.
\end{aligned}
\end{equation*}
Lemma \ref{lem:Caccioppoli_A} taken with $k=0$ implies (as argued for \eqref{eq:higher_integrability}, for a dimensional $K(n)$)
\[\int_{\hat{M}_j\cap B_1} |A_{\hat{M}_j}|^{\frac{2n}{n-2}} \leq K(n)  \left( \int_{\hat{M}_j\cap B_{\frac{3}{2}}}|A_{\hat{M}_j}|^2\right)^{\frac{n}{n-2}}.\]
Letting $\eta$ be a fixed function that is equal to $1$ in $B_{\frac{3}{2}}$, is supported in $B_2$, and with $|\nabla \eta|\leq 4$, we find $\int_{\hat{M}_j\cap B_{\frac{3}{2}}}|A_{\hat{M}_j}|^2 \leq \int_{\hat{M}_j\cap B_2} |\nabla \eta|^2 \leq 16 \Lambda(C)$, for all sufficiently large $\ell$ (we used the stability inequality), where we let $\Lambda(C)=\|C\|(B_2) +1$. Therefore, $\left(\int_{B_1} |A_{\hat{M}_j}|^{\frac{2n}{n-2}}\right)^{\frac{n-2}{n}}$ is uniformly bounded for all sufficiently large $\ell$. Next, observe that $C_\tau \cap B_1$ is contained in the union of $\frac{b(n)}{\tau^{n-1}}$ balls of radius $\tau$, where $b(n)$ is a dimensional constant (a rough cover shows that $b(n)=n^{\frac{n-1}{n}}$ works), and in each such ball the $n$-area of $\hat{M}_j$ is at most $k(n)\Lambda(C) \tau^n$, by the monotonicity formula for the area ratio, for a dimensional constant $k(n)$. Hence $\mathcal{H}^n(\hat{M}_j\cap C_\tau \cap B_1)^{\frac{2}{n}} \leq \Big(\Lambda(C) b(n) k(n) \Big)^{\frac{2}{n}} \tau^{\frac{2}{n}}$ and  
\[\int_{B_1 \cap \hat{M}_j}|A_{\hat{M}_j}|^2 \leq \Bigg(\int_{(B_1 \setminus C_\tau) \cap \hat{M}_j}|A_{\hat{M}_j}|^2 \Bigg) + \Lambda(C) b(n) k(n)\tau^{\frac{2}{n}}.\]
As $\tau>0$ can be chosen arbitrarily small, $\int_{B_1 \cap \hat{M}_j}|A_{\hat{M}_j}|^2 \to 0$ as $j \to \infty$.
\end{proof}

This shows that, for all sufficiently large $j$, the quantity $\int_{B_{2R}^{n+1} \cap \hat{M}_j} |A_{\hat{M}_j}|^2$ is at most $\epsilon_0$, so Theorem \ref{thm:eps_reg_A} applies. 
As immediate consequence, we have:

\begin{lem}
\label{lem:A_sheeting_2}
Let $\hat{M}_j$ be a sequence of smooth properly immersed two-sided stable minimal hypersurfaces that converge (as varifolds) in $B^{n+1}_{4R}(0)$ to a classical cone $C$, as $j \to \infty$, with $n\leq 6$. Then $C$ is a sum of hyperplanes with multiplicity, which we describe as a smooth immersion; moreover, $\sup_{B^{n+1}_{\frac{R}{2}}(0) \cap \hat{M}_j} |A_{\hat{M}_j}|  \to 0$ as $j \to \infty$, and $\hat{M}_j$ converge smoothly to $C$ in $B^{n+1}_{\frac{R}{2}}(0)$ as immersions (with $q$ connected components, with $q=\Theta(\|C\|, 0)$).
\end{lem}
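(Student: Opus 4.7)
The plan is to combine Lemma \ref{lem:small_A^2_classical_cone} (small $L^2$ norm of $|A|$) with the $\eps$-regularity Theorem \ref{thm:eps_reg_A} (which upgrades it to a pointwise sup-bound) and then pass to the limit. First, by Lemma \ref{lem:small_A^2_classical_cone}, $\int_{B_{2R}\cap \hat{M}_j}|A_{\hat{M}_j}|^2\to 0$, so for all $j$ large enough the scale-invariant smallness hypothesis $\frac{1}{(2R)^{n-2}}\int_{B_{2R}\cap \hat{M}_j}|A_{\hat{M}_j}|^2\leq \epsilon_0$ holds; Theorem \ref{thm:eps_reg_A} applies and gives
\[
\sup_{B^{n+1}_{R/2}(0)\cap \hat{M}_j}|A_{\hat{M}_j}|\leq \frac{c(n)}{R}\Bigg(\frac{1}{(2R)^{n-2}}\int_{B_{2R}\cap \hat{M}_j}|A_{\hat{M}_j}|^2\Bigg)^{\!1/2}\longrightarrow 0.
\]

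Next, I would upgrade the uniform decay of $|A|$ to smooth convergence on $B^{n+1}_{R/2}(0)$. This is a standard consequence of uniform $C^2$ control together with the local area bounds (coming from monotonicity and varifold convergence): around every point $p\in \hat{M}_j\cap B^{n+1}_{R/2}(0)$, the tangent plane $T_p\hat{M}_j$ rotates at rate at most $|A_{\hat{M}_j}|(p)$ along $\hat{M}_j$, so for $j$ large each connected component of $\hat{M}_j\cap B^{n+1}_{R/2}(0)$ can be represented as a smooth graph of arbitrarily small gradient over some affine plane. Elliptic regularity for the minimal surface equation then produces uniform $C^{k}$ bounds (for every $k$) and, via Arzelà--Ascoli, a smooth limit immersion $M_\infty$ on $B^{n+1}_{R/2}(0)$ with $|A_{M_\infty}|\equiv 0$; each of its (finitely many) connected components is therefore a piece of an affine hyperplane.

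Since varifold convergence is preserved, $M_\infty$ and $C$ define the same varifold on $B^{n+1}_{R/2}(0)$. Hence $C\mres B^{n+1}_{R/2}(0)$ is supported on a finite union of affine hyperplanes with integer multiplicities; but $C$ is a cone, so each of these hyperplanes passes through $0$, and, being closed under the dilation symmetry of $C$, contains the $(n-1)$-dimensional spine. Consequently, in the description of $C$ as a classical cone, the constituent half-hyperplanes pair up into full hyperplanes (with multiplicities) through the spine, proving the first claim. The number of connected components of $M_\infty$ (and, for $j$ large, of $\hat{M}_j\cap B^{n+1}_{R/2}(0)$) equals the density $\Theta(\|C\|,0)$, giving the stated value of $q$.

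The only mildly non-trivial point is the passage from uniform curvature smallness to smooth convergence as immersions (since the convergence is multi-sheeted and $\hat{M}_j$ are only immersed). This is handled by the local graph argument above: the uniform smallness of $|A|$, together with the area bound from monotonicity, forces each $\hat{M}_j$ to decompose, on $B^{n+1}_{R/2}(0)$, into a fixed finite number of smooth graphs of small gradient, and smooth convergence of these graphs follows from standard quasilinear elliptic estimates for the minimal surface equation.
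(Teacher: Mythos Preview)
Your proposal is correct and follows exactly the route the paper takes: the paper derives Lemma \ref{lem:A_sheeting_2} as an ``immediate consequence'' of Lemma \ref{lem:small_A^2_classical_cone} combined with Theorem \ref{thm:eps_reg_A}, and you have simply spelled out the standard compactness step (uniform curvature decay $\Rightarrow$ smooth graphical convergence $\Rightarrow$ flat limit) that the paper leaves implicit. Your additional remark that the flatness of the limit forces the half-hyperplanes of the classical cone to pair into full hyperplanes is also in line with the paper's intended reading.
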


\subsection{Closeness to a union of hyperplanes}

With arguments analogous to those in Section \ref{scenario_class_cone}, we will establish:

\begin{lem}
\label{lem:A_sheeting_3}
Let $n\leq 6$ and let $\hat{M}_j$ be a sequence of smooth properly immersed two-sided stable minimal hypersurfaces that converge (as varifolds) in $B^{n+1}_{4R}(0)$, as $j \to \infty$, to a cone $C$ given by a (finite) union of hyperplanes passing through the origin, each taken with an integer multiplicity. Then $\sup_{B^{n+1}_{\frac{R}{2}}(0) \cap \hat{M}_j} |A_{\hat{M}_j}|  \to 0$ as $j \to \infty$, and (describing $C$ as a smooth immersion) $\hat{M}_j$ converge smoothly to $C$ in $B^{n+1}_{\frac{R}{2}}(0)$ as immersions (with $q$ connected components, with $q=\Theta(\|C\|, 0)$).
\end{lem}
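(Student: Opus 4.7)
The plan is to mirror the proof of Lemma \ref{lem:small_A^2_classical_cone} followed by Lemma \ref{lem:A_sheeting_2}, with the only real adjustment being the geometry of the singular set. Write $C = \sum_{i=1}^N q_i |P_i|$ with $P_i$ distinct hyperplanes through $0$, and let $\Sigma = \bigcup_{i\neq k} (P_i \cap P_k)$, a finite union of $(n-1)$-dimensional linear subspaces (this is the singular set of $\mathrm{spt}\,C$ together with the set where the density of $C$ jumps). For $\tau>0$, let $N_\tau$ denote the open $\tau$-tubular neighborhood of $\Sigma$ in $\R^{n+1}$. Note that $C \res (B_{4R}\setminus \Sigma)$ is locally a single hyperplane with some integer multiplicity at every point.

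The first step is strong convergence away from $\Sigma$. For every $x\in B_{4R}\setminus\Sigma$, there is $\rho_x>0$ such that $B_{\rho_x}^{n+1}(x)$ meets only one $P_i$, so $C\res B_{\rho_x}^{n+1}(x) = q_i \lvert P_i\cap B_{\rho_x}^{n+1}(x)\rvert$. Lemma \ref{lem:A_sheeting_1} then yields $\sup_{\hat{M}_j\cap B_{\rho_x/2}^{n+1}(x)} |A_{\hat{M}_j}|\to 0$ and smooth convergence. By compactness of $\overline{B_{2R}}\setminus N_\tau$, extracting a finite subcover yields $\int_{(B_{2R}\setminus N_\tau)\cap \hat{M}_j} |A_{\hat{M}_j}|^2 \to 0$ as $j\to\infty$, for every fixed $\tau>0$.

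The second step, exactly as in Lemma \ref{lem:small_A^2_classical_cone}, controls the energy near $\Sigma$ via H\"older:
\[
\int_{\hat{M}_j\cap N_\tau\cap B_{2R}} |A_{\hat{M}_j}|^2 \leq \left(\int_{\hat{M}_j\cap B_{2R}} |A_{\hat{M}_j}|^{\frac{2n}{n-2}}\right)^{\frac{n-2}{n}} \mathcal{H}^n(\hat{M}_j\cap N_\tau\cap B_{2R})^{\frac{2}{n}}.
\]
The first factor is uniformly bounded: stability gives $\int_{\hat{M}_j \cap B_{3R}}|A_{\hat{M}_j}|^2\leq c(n)\Lambda(C) R^{n-2}$ for $j$ large (where $\Lambda(C)=\|C\|(B_{4R})+1$), and Lemma \ref{lem:Caccioppoli_A} with $k=0$ combined with Michael--Simon, as in \eqref{eq:higher_integrability}, upgrades this to a uniform bound on $\int_{B_{2R}} |A_{\hat{M}_j}|^{2n/(n-2)}$. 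For the second factor, since $\Sigma$ is a finite union of $(n-1)$-dimensional subspaces, each intersection $L\cap B_{2R}$ admits an $\tau$-net of $\leq c(n)(R/\tau)^{n-1}$ points; the concentric balls of radius $\tau$ cover $N_\tau\cap B_{2R}$ (in the two normal directions one ball suffices). The monotonicity formula gives $\mathcal{H}^n(\hat{M}_j\cap B_\tau^{n+1}(x))\leq k(n)\Lambda(C)\tau^n$ for $j$ large, hence $\mathcal{H}^n(\hat{M}_j\cap N_\tau\cap B_{2R})\leq b(n)k(n)\Lambda(C) R^{n-1}\tau$. Therefore $\int_{\hat{M}_j\cap N_\tau\cap B_{2R}} |A_{\hat{M}_j}|^2 \leq K(n,\Lambda(C),R)\, \tau^{2/n}$.

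Combining the two steps, by first choosing $\tau$ small and then $j$ large we obtain $\int_{\hat{M}_j \cap B_{2R}} |A_{\hat{M}_j}|^2 \to 0$, so eventually this quantity is below the threshold $\epsilon_0$ of Theorem \ref{thm:eps_reg_A}, which immediately yields $\sup_{\hat{M}_j\cap B_{R/2}^{n+1}}|A_{\hat{M}_j}| \to 0$. The conclusion that $\hat{M}_j$ converges smoothly to $C$ as immersions in $B_{R/2}^{n+1}$, with $q=\Theta(\|C\|,0)$ connected components, is then standard: uniform curvature bounds together with varifold convergence give local graphical representations over each sheet of the limit via Arzel\`a--Ascoli, and the density identity fixes the number of components. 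The only genuinely new point compared with Lemma \ref{lem:small_A^2_classical_cone} is the covering estimate for $N_\tau$ when the singular set is a finite union of $(n-1)$-planes rather than a single spine; the exponent $\tau^{2/n}$ remains positive and kills the near-$\Sigma$ contribution, so no dimensional obstruction arises.
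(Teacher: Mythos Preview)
Your proof is correct, but it takes a genuinely different route from the paper's. The paper proceeds by \emph{induction on the spine dimension} $D=\dim S(C)$, where $S(C)=\bigcap_i P_i$: the base cases $D\in\{n-1,n\}$ are Lemmas \ref{lem:A_sheeting_1} and \ref{lem:A_sheeting_2}, and for smaller $D$ the paper observes that at any $p\in\mathrm{spt}\|C\|\setminus S(C)$ the translated cone has strictly larger spine dimension, so the inductive hypothesis gives smooth convergence near every such $p$; the remaining energy is then confined to a neighbourhood of the $D$-dimensional set $S(C)$ and is killed by the H\"older/higher-integrability/covering argument with exponent $\tau^{2(n-D)/n}$.

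You instead take the bad set to be the larger $(n-1)$-dimensional set $\Sigma=\bigcup_{i\neq k}(P_i\cap P_k)$ and apply Lemma \ref{lem:A_sheeting_1} directly away from $\Sigma$ (where the cone is locally a single plane with multiplicity), with no induction. The price is a weaker exponent $\tau^{2/n}$ in the covering estimate, but this is still positive and suffices. Your argument is more elementary and shorter; the paper's induction yields a better decay rate near the singular set but this extra strength is not used. One minor point: applying Lemma \ref{lem:A_sheeting_1} in $B_{\rho_x}^{n+1}(x)$ gives the curvature bound on $B_{\rho_x/8}^{n+1}(x)$, not $B_{\rho_x/2}^{n+1}(x)$, but this is harmless for the compactness cover.
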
 

\begin{proof}
Let $C= \sum_{j=1}^N {q_j} |H_j|$, where $q_j \in \N$ and $H_j$ is a hyperplane (through the origin), $q=\sum_{j=1}^N q_j$. For such a cone, we let $S(C) = \cap_{j=1}^N H_j$. Note that if $\text{dim}(S(C)) = n$ then $C$ is a single hyperplane (possibly with multiplicity), while if $\text{dim}(S(C)) = n-1$ then $C$ is a classical cone. Therefore in these two cases, Lemmas \ref{lem:A_sheeting_1} and \ref{lem:A_sheeting_2} give the conclusion. We prove the general statement by (finite) induction on the dimension. Let $\text{dim}(S(C)) =D \in \{0, 1, \ldots n-2\}$ and assume (inductively) that the conclusion has been established for any cone $\tilde{C}$ (given by a union of hyperplanes with multiplicity) for which $\text{dim}(S(\tilde{C})) \in \{D+1, \ldots, n\}$.

Let $p \in \text{spt}\|C\| \setminus \text{S}(C)$. Upon relabeling, we let $H_j$ for $j\in \{1, \ldots , N_1\}$ be the hyperplanes that pass through $p$, and $H_j$ for $j\in \{N_1+1, \ldots N\}$ be those that do not. Since $p \in \text{spt}\|C\| \setminus \text{S}(C)$, necessarily $N_1 <N$.  

Let $\eta_p$ be the translation $\eta_p(x)= x-p$ in $\R^{n+1}$. Let $C_1={\eta_p}_\sharp(\sum_{j=1}^{N_1} {q_j} |H_j|)=\sum_{j=1}^{N_1} {q_j} {\eta_p}_\sharp |H_j|$. Since the line through $0$ and $p$ is contained in $H_j$ for all $j\in \{1, \ldots , N_1\}$, we have $C_1= \sum_{j=1}^{N_1} {q_j}  |H_j|$. This gives immediately $ S(C_1) \supset  S(C)$. This inclusion is strict: in fact, $p\in S(C_1)$, while $p\notin S(C)$. In particular, $\text{dim}(S(C_1)) > \text{dim}(S(C))$. 

\medskip

For any $p \in \text{spt}\|C\| \setminus \text{S}(C)$ we consider a ball $B_r^{n+1}(p)$ sufficiently small to ensure that, for each $j \in \{1, \ldots, N\}$, we have $H_j \cap B_r^{n+1}(p) \not = \emptyset \Rightarrow p \in H_j$. Then the inductive hypothesis can be applied to the sequence $\eta_p(\hat{M}_j \cap B_r^{n+1}(p))$, since it converges in $B_r^{n+1}(0)$ to a cone $C_1$, given by a union of hyperplanes with multiplicity, and with $\text{dim}(S(C_1)) > \text{dim}(S(C))$ (in view of the argument just given). This yields $\sup_{B^{n+1}_{\frac{r}{8}}(p) \cap \hat{M}_j} |A_{\hat{M}_j}|  \to 0$ as $j \to \infty$.

\medskip

Finally, letting $C_{\tau R}= \{x\in \R^{n+1}: \text{dist}_{\R^{n+1}}\big(x, S(C) \cap B_{2R}^{n+1}(0)\big)<\tau R\}$ for $\tau>0$ given, we note that $C_{\tau R}$ can be covered by $\frac{b(D)}{\tau^{D}}$ balls of radius $\tau R$, where $b(D)$ is a constant that depends on $D$ only. Then we argue as in Lemma \ref{lem:small_A^2_classical_cone}. By H\"older's inequality
\[\frac{1}{R^{n-2}}\int_{\hat{M}_j \cap C_{\tau R} \cap B^{n+1}_{2R}} |A_{\hat{M}_j}|^2  \]\[ \leq \left(\frac{1}{R^{n-\frac{2n}{n-2}}}\int_{\hat{M}_j \cap B^{n+1}_{2R}} |A_{\hat{M}_j}|^{\frac{2n}{n-2}}\right)^{\frac{n-2}{n}} \Bigg(\frac{\mathcal{H}^n(\hat{M}_j \cap C_{\tau R} \cap B^{n+1}_{2R})}{R^n}\Bigg)^{\frac{2}{n}}.\]
Using \eqref{eq:higher_integrability} and stability
\[\frac{1}{R^{n-\frac{2n}{n-2}}}\int_{\hat{M}_j\cap B_{2R}^{n+1}} |A_{\hat{M}_j}|^{\frac{2n}{n-2}} \leq K(n)  \left( \frac{1}{R^{n-2}}\int_{\hat{M}_j\cap B^{n+1}_{3R}}|A_{\hat{M}_j}|^2\right)^{\frac{n}{n-2}} \leq \Lambda(C, n),\]
for a constant $\Lambda(C, n)$ determined by $n$ and by the cone $C$. Moreover, by the monotonicity formula, with $\Lambda(C)$ denoting a constant determined by the cone, for all sufficiently large $j$ we have
\[\mathcal{H}^n(\hat{M}_j \cap C_{\tau R} \cap B^{n+1}_{2R}) \leq b(D)\Lambda(C)\tau^{n-D} R^n.\]
Hence
\[\frac{1}{R^{n-2}}\int_{\hat{M}_j \cap C_{\tau R} \cap B^{n+1}_{2R}} |A_{\hat{M}_j}|^2  \leq c(n, D, C) \tau^{\frac{2(n-D)}{n}},\]
which, for $\tau$ sufficiently small, and together with the conclusion obtained above around any $p \in \text{spt}\|C\| \setminus \text{S}(C)$, implies that $\frac{1}{R^{n-2}}\int_{\hat{M}_j \cap B_{2R}^{n+1}(0)} |A_{\hat{M}_j}|^2 \leq \epsilon_0$ for all sufficiently large $j$. Then for such $j$ Theorem \ref{thm:eps_reg_A} applies and the lemma follows.
\end{proof}

\subsection{Tangent cone analysis and conclusion}

\begin{lem}
\label{lem:A_sheeting_both}
Let $M_j$ be a sequence of smooth properly immersed two-sided stable minimal hypersurfaces in an open set $U\subset \R^{n+1}$, with $n\leq 6$, converging (as varifolds) to a (stationary integral) varifold $V$. Let $x \in U$ be such that (at least) one tangent cone to $V$ at $x$ is either a (finite) union of hyperplanes with multiplicity 
(possibly a single hyperplane), or a classical cone. There exists $\rho>0$ such that $V$ is the varifold associated to a smooth immersion in $B_\rho^{n+1}(x)$, and $M_j$ converge smoothly (as immersions) to $V$ in $B_\rho^{n+1}(x)$.
\end{lem}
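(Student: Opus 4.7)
The plan is to combine the tangent-cone information at $x$ with a compactness argument, reducing to an application of Theorem \ref{thm:eps_reg_A} at one small scale. Write $\eta_{x,r}(y)=(y-x)/r$. Since the given $C$ is a tangent cone to $V$ at $x$, there is a sequence $r_k\downarrow 0$ with $V_k:=(\eta_{x,r_k})_\sharp V \to C$ as varifolds in $\R^{n+1}$. Depending on the type of $C$ (single hyperplane with multiplicity, classical cone, or union of hyperplanes), exactly one of Lemmas \ref{lem:A_sheeting_1}, \ref{lem:A_sheeting_2}, \ref{lem:A_sheeting_3} applies to any sequence of smooth, two-sided, stable minimal immersions in $B_4$ converging as varifolds to $C$.

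First I would upgrade the relevant sheeting lemma to a quantitative form: there exists $\delta=\delta(C,n)>0$ such that every smooth, properly immersed, two-sided stable minimal hypersurface $\hat M$ in $B_4$ whose varifold distance to $C$ is at most $\delta$ satisfies $\int_{\hat M\cap B_2}|A_{\hat M}|^2 \leq \epsilon_0$, where $\epsilon_0=\epsilon_0(n)$ is the constant in Theorem \ref{thm:eps_reg_A}. This upgrade is a routine contradiction/compactness argument: a failure would produce a sequence of stable minimal immersions converging to $C$ as varifolds and yet violating the conclusion of the applicable sheeting lemma.

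With this modulus in hand, I fix $k$ large enough that the varifold distance from $V_k$ to $C$ is less than $\delta/2$, and then pick $J$ so that for every $j\geq J$ the rescaled immersion $(\eta_{x,r_k})_\sharp M_j$ lies within $\delta/2$ of $V_k$ (possible by the varifold convergence $M_j\to V$ combined with continuity of the pushforward by the fixed diffeomorphism $\eta_{x,r_k}$). Each such rescaled immersion is then within $\delta$ of $C$ in $B_4$, so the quantitative statement together with Theorem \ref{thm:eps_reg_A} gives $\sup |A|\leq 1$ on $(\eta_{x,r_k})_\sharp M_j\cap B_{1/2}$, uniformly for $j\geq J$. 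Standard elliptic bootstrapping for the minimal surface system promotes this pointwise bound to uniform $C^\ell$ estimates on the local graph representations for every $\ell$, so by Arzel\`a-Ascoli the $(\eta_{x,r_k})_\sharp M_j$ are smoothly precompact in $B_{1/4}$; uniqueness of the varifold limit forces any smooth subsequential limit to coincide with $V_k$, hence the full sequence converges smoothly to $V_k$, exhibiting $V_k$ as a smooth immersion in $B_{1/4}$. Rescaling back and setting $\rho=r_k/4$ yields the conclusion. The main obstacle is the quantitative upgrade in the second step; everything else amounts to bookkeeping with pushforwards and standard elliptic regularity.
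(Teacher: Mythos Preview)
Your argument is correct and rests on the same ingredients as the paper (Lemmas \ref{lem:A_sheeting_1}--\ref{lem:A_sheeting_3} feeding into Theorem \ref{thm:eps_reg_A}), but the top-level organisation differs. The paper argues by contradiction: assuming the curvature is unbounded near $x$ along the sequence, it produces a diagonal sequence of rescalings $\tilde M_j$ of $M_j$ (with scales drawn from the blow-up sequence $r_k$) that converge as varifolds to the given tangent cone $C$ while carrying points of diverging curvature, directly contradicting the relevant one of Lemmas \ref{lem:A_sheeting_1}--\ref{lem:A_sheeting_3}. You instead work directly: you first extract from those lemmas a quantitative threshold $\delta=\delta(C,n)$ (via the same contradiction/compactness mechanism), then freeze a single scale $r_k$ at which all $(\eta_{x,r_k})_\sharp M_j$ for $j$ large fall within $\delta$ of $C$, and apply Theorem \ref{thm:eps_reg_A} there. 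Your route makes the radius $\rho=r_k/4$ explicit and packages the compactness step as a reusable modulus; the paper's route is shorter, folding the choice of scale into the diagonal subsequence and avoiding the intermediate $\delta$-statement.
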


\begin{proof}
It suffices to find $\rho>0$ such that $\limsup_{j\to \infty} \sup_{M_j \cap B^{n+1}_{2\rho}(x)} |A_{M_j}|<\infty$ and $2\rho < \text{dist}(x, \partial U)$ (after which, standard compactness under $L^\infty$ curvature bounds gives the result). Arguing by contradiction, we assume that this fails for every such $\rho$, hence there exist a subsequence (not relabeled) $M_j$ and associated points $x_j \in M_j$ with $|A_{M_j}(x_j)| \to \infty$ and $x_j \to x$. Upon passing to a further subsequence (determined by the chosen blow up of $V$ at $x$), we find, for each $j$, rescalings $\tilde{M}_j$ of $M_j$ around $x$ that converge to the chosen tangent to $V$ at $x$ (a hyperplane with multiplicity, a classical cone, or a union of hyperplanes with multiplicity) and such that $|A_{\tilde{M}_j}(\tilde{x}_j)| \to \infty$, where $\tilde{x}_j \in \tilde{M}_j$ is the image of $x_j$ via the dilation associated to $j$. For each type of cone (applying respectively Lemma \ref{lem:A_sheeting_1}, Lemma \ref{lem:A_sheeting_2}, or Lemma \ref{lem:A_sheeting_3}) we reach a contradiction.
\end{proof}

The argument that we recall next is a classical procedure (see e.g.~\cite[Section 6]{SchSim}, \cite[Section 8]{WicDG}) that involves tangent cone analysis (in the sense of varifolds), Federer's dimension reducing principle (see e.g.~\cite[Appendix A]{SimonNotes}), and Simons' classification of stable cones (\cite{Simons}, see also \cite[Appendix B]{SimonNotes} and \cite[Section 3]{SSY}), together with Lemma \ref{lem:A_sheeting_both} itself, to show that:

\begin{Prop}
\label{Prop:dim_red}
Let $U$, $M_j$, $V$ be as in the hypotheses of Lemma \ref{lem:A_sheeting_both}. Then any tangent cone to $V$ is a (finite) union of hyperplanes, each with an integer multiplicity. In particular, the conclusion of Lemma \ref{lem:A_sheeting_both} applies at every $x \in \text{spt}\|V\|$.
\end{Prop}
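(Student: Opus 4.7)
The plan is to establish the stronger statement that $V$ is the varifold of a smooth immersion at every $x \in \text{spt}\|V\|$: once this holds, tangent cones at any point are tangent planes of the immersion counted with multiplicity, hence automatically finite unions of hyperplanes with integer multiplicities. I would proceed by contradiction, combining Federer's dimension reduction with Simons' classification of stable minimal hypercones \cite{Simons} and Lemma \ref{lem:A_sheeting_both} itself.

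First I would check that the class $\mathcal{F}$ of varifolds arising as limits of properly immersed two-sided stable minimal hypersurfaces (with locally uniformly bounded mass) is closed under taking iterated tangent cones. Existence of tangent cones to any $W \in \mathcal{F}$ follows from the monotonicity formula and Allard's compactness, and a diagonal extraction on the approximating sequence realises any such tangent cone as a varifold limit of suitable rescalings of the $M_j$'s, so it again lies in $\mathcal{F}$. In particular Lemma \ref{lem:A_sheeting_both} applies to every iterated tangent cone.

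Suppose by contradiction some $x_0 \in \text{spt}\|V\|$ is a singular point (in the sense that no neighbourhood of $x_0$ is the varifold of a smooth immersion). Take a tangent cone $C_1$ at $x_0$: by Lemma \ref{lem:A_sheeting_both}, $C_1$ is neither a classical cone nor a finite union of hyperplanes with multiplicities. Let $S(C_1)$ be its maximal translation-invariance subspace and $\text{sing}(C_1)$ the set of non-regular points of $C_1$. If $\text{sing}(C_1) \setminus S(C_1) \neq \emptyset$, pick $y_1$ there and take a tangent cone $C_2$ of $C_1$ at $y_1$; because $C_1$ is a cone, $C_2$ inherits translation invariance along $S(C_1) \oplus \R y_1$, so $\dim S(C_2) \geq \dim S(C_1) + 1$, while $C_2 \in \mathcal{F}$ is still singular at $0$ and (again by Lemma \ref{lem:A_sheeting_both}) not of the `nice' form. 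Iterate: since spine dimensions are bounded by $n$, after at most $n+1$ steps one obtains a non-nice iterated tangent cone $C^*$ with $\text{sing}(C^*) \subseteq S(C^*)$, equivalently a splitting $C^* = C^*_0 \times S(C^*)$ where $d^* := \dim S(C^*) \leq n$ and $C^*_0$ is a cone in $\R^{n+1-d^*}$ whose singular set is contained in $\{0\}$.

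To close, I would run the case analysis on $d^*$. For $d^* = n$, $C^*$ is a hyperplane with multiplicity; for $d^* = n-1$, $C^*$ is a classical cone, hence by Lemma \ref{lem:A_sheeting_2} a union of hyperplanes. For $d^* \leq n-2$, Simons' classification \cite{Simons} (applicable since $n - d^* \leq n \leq 6$) forces each connected component of $C^*_0$ in $\R^{n+1-d^*}$ to be a hyperplane through $0$; the restriction $\text{sing}(C^*_0) \subseteq \{0\}$ combined with the fact that two distinct hyperplanes in ambient dimension $\geq 3$ would intersect in a subspace of positive dimension (creating further singularities) then forces $C^*_0$ to be a single hyperplane with multiplicity, so $C^*$ is itself a hyperplane with multiplicity in $\R^{n+1}$. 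In every case $C^*$ is nice, contradicting the construction. The main technical point to pin down is the transfer of stability of $C^*$ to the smooth cross-section $C^*_0$ so that Simons' theorem genuinely applies; this is standard (cut-off test functions in the spine directions reduce stability on $C^*$ to stability on $C^*_0$), the rest being the book-keeping of the iteration guided by Lemma \ref{lem:A_sheeting_both}.
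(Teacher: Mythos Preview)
Your argument follows essentially the same dimension-reduction scheme as the paper: show that iterated tangent cones remain in the class, iterate tangents at non-immersed points off the spine until the singular set is contained in the spine, then invoke Simons on the cross-section to reach a contradiction via Lemma~\ref{lem:A_sheeting_both}.

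Two points deserve attention. First, the step forcing $C^*_0$ to be a \emph{single} hyperplane is unnecessary---a finite union of hyperplanes already falls under Lemma~\ref{lem:A_sheeting_both} and yields the contradiction---and is in fact misapplied in the immersed setting: crossing hyperplanes are smoothly \emph{immersed} along their intersection, so no ``further singularities'' arise in the sense you are using throughout the iteration (you correctly defined singular as ``no neighbourhood is the varifold of a smooth immersion''). Second, the technical point you flag---transferring stability from $C^*$ to $C^*_0$---presupposes that the smoothly immersed part of $C^*$ is itself stable. This is not automatic from $C^* \in \mathcal{F}$; the paper establishes it by applying Lemma~\ref{lem:A_sheeting_both} at each smoothly immersed point of $C^*$ (where the unique tangent is a hyperplane with multiplicity), obtaining smooth convergence of the approximating $\tilde{M}_{\ell(j)}$ there, from which stability is inherited. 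That, rather than the splitting-off of the spine, is the substantive stability step.
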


It turns out that it is natural to prove the stronger result that any iterated tangent to $V$ is a finite union of hyperplanes (each with an integer multiplicity). We first recall the relevant notions and facts.

\medskip

If $M_\ell \to V$ and $C_x$ is a tangent cone to $V$ at $x$, there exist $r_j \to 0$ and a subsequence $\ell(j)$ such that $\hat{M}_{\ell(j)} = \lambda_{\big(x, \frac{1}{r_j}\big)}M_{\ell(j)}$ converge (as varifolds) to $C_x$, where $\lambda_{\big(x, \frac{1}{r_j}\big)}$ is the dilation of factor $\frac{1}{r_j}$ centred at $x$, combined with the translation that sends $x$ to $0$, that is $\lambda_{\big(x, \frac{1}{r_j}\big)}(z) = \frac{z-x}{r_j}$.

The spine $S(C)$ of a cone $C$ is the maximal subspace of translation invariance (and coincides with the set of points of maximal density). (It is immediate that when the cone is a union of hyperplanes, the case addressed in Lemma \ref{lem:A_sheeting_3}, the spine is the intersection of these hyperplanes, justifying the notation used there.) We further recall the notion of iterated tangents (to $V$ at $x$), by which we mean the collection of cones $C$ for which there exist cones $C_1, \ldots, C_N$, with $C_N=C$ and $N\in \N$, $N\geq 1$, and points $p_1 \in C_1 \setminus S(C_1), \ldots p_N \in C_N\setminus S(C_N)$ such that $C_m$ is a tangent cone to $C_{m-1}$ at $p_{m-1}$ for $m\geq 2$ and $C_1=C_x$ is a tangent cone to $V$ at $x$. For every $C$ in the space of iterated tangents to $V$ at $x$, we can find $r_j \to 0$ a subsequence $\ell(j)$ and points $z_{\ell(j)} \to x$ (not necessarily lying on $M_{\ell(j)}$) such that $\tilde{M}_{\ell(j)} = \lambda_{\big(z_{\ell(j)}, \frac{1}{r_j}\big)}M_{\ell(j)}$ converge (as varifolds) to $C$, where $\lambda_{\big(z_{\ell(j)}, \frac{1}{r_j}\big)}$ is the dilation of factor $\frac{1}{r_j}$ centred at $z_{\ell(j)}$, combined with the translation that sends $z_{\ell(j)}$ to $0$, that is $\lambda_{\big(z_{\ell(j)}, \frac{1}{r_j}\big)}(z) = \frac{z-z_{\ell(j)}}{r_j}$. (In the case $N=1$ one can take $z_{\ell(j)} = x$, as seen above.)

\begin{proof}[Proof of Proposition \ref{Prop:dim_red}]
We first establish that \emph{for any iterated tangent $C$ the smoothly immersed part of $C$ is stable}. Indeed, in a sufficiently small ambient ball $B^{n+1}_{\rho_y}(y)$, centred at any given $y \in C$ around which $C$ is smoothly immersed, the (dilated) hypersurfaces $\tilde{M}_{\ell(j)}$ converge smoothly (as immersions) to $C$. This is a consequence of Lemma \ref{lem:A_sheeting_both}, as the (unique, in this case) tangent to $C$ at $y$ is of the type prescribed there.
The arbitrariness of $y$ leads to smooth convergence of $\tilde{M}_{\ell(j)}$ to $C$ on the smoothly immersed part of $C$, and thus the stability condition is inherited by the smoothly immersed part of $C$.

\medskip

By Simons' classification, any cone $C$ of dimension $n \in \{2, \ldots, 6\}$ (with vertex at the origin) in $\R^{n+1}$, smoothly immersed away from the origin, which is stable on the smoothly immersed part, must be a union of hyperplanes. From this we will deduce that \emph{the only iterated tangent cones that are smoothly immersed away from the spine are (finite) unions of hyperplanes with multiplicity and classical cones}. Indeed, if the spine dimension is $n$ or $n-1$ then the cone is respectively a single hyperplane with multiplicity or a classical cone. So we assume that $C \setminus S(C)$ is smoothly immersed and stable, with $\text{dim}(S(C)) \leq n-2$. We slice $C$ with affine planes of dimension complementary to the spine $S(C)$, and orthogonal to it. Any such slice is a cone, of dimension at least $2$, since $\text{dim}(S(C)) \leq (n-2)$, and at most $n$, since it is a slice of the $n$-dimensional cone $C$. This slice (in $\R^{n+1-\text{dim}S(C)}$) is a smoothly immersed cone except possibly for an isolated singularity at the vertex; moreover, its regular part inherits stability. By Simons' result the slice is a union of hyperplanes in $\R^{n+1-\text{dim}S(C)}$. Then also $C$ is a union of hyperplanes.

\medskip

A key fact, underlying Federer's dimension reducing principle, is that the spine dimension strictly increases when we take iterated tangents, $\text{dim}(S(C_1)) < \ldots < \text{dim}(S(C_N))$ with the above notation. (This is due to the fact that, choosing a point $y$ away from the spine $S(C)$, the linear subspace spanned by $y$ and $S(C)$ becomes translation invariant for the tangent to $C$ at $y$.) This is used in the following way, to prove that \emph{any iterated tangent must be smoothly immersed away from its spine}.

Assume that a given cone $C$ in the collection of iterated tangents is not smoothly immersed away from its spine $S(C)$, whose dimension we denote by $s$. As $C$ is neither a hyperplane with multiplicity nor a classical cone, we must have $s \in \{0, \ldots, n-2\}$. We consider a tangent cone to $C$ at a non-immersed point in $C \setminus S(C)$, and iterate this step until we find a cone $\hat{C}$ that is smoothly immersed away from its spine. This is achieved after at most $n-s-1$ iterations (thanks to the strict increase in spine dimension, after $n-s-1$ iterations we must have a classical cone or a hyperplane with multiplicity). We let $\tilde{C}$ be the iterated tangent cone for which $\hat{C}$ is a tangent cone at a non-immersed point $y \in \tilde{C} \setminus S(\tilde{C})$. 

As shown above, $\hat{C}$ is either a (finite) union of hyperplanes with integer multiplicity or a classical cone. Lemma \ref{lem:A_sheeting_both} applies to the sequence $\tilde{M}_{\ell(j)}$ that converges to $\tilde{C}$ in a suitably small ball, contradicting that $y$ is a non-immersed point. This concludes the proof of Proposition \ref{Prop:dim_red}.
\end{proof}

\begin{oss}
The conclusion of Proposition \ref{Prop:dim_red} also says that any tangent to $V$ that is a classical cone is in fact a union of hyperplanes (this also follows from Lemma \ref{lem:A_sheeting_both}). We remark that this could be established a priori, employing the framework of integral curvature varifolds (to obtain that if the limit, as varifolds, of a sequence of smooth stable minimal immersions is a classical cone, then it is a union of hyperplanes). If we did that, there would be no need to treat classical cones separately, and Lemma \ref{lem:A_sheeting_2} could be subsumed under Lemma \ref{lem:A_sheeting_3}.
\end{oss}

Theorem \ref{thm:curv_est_6} follows from Proposition \ref{Prop:dim_red} by a contradiction argument.
Using standard compactness arguments (which require the given mass bounds), we assume the existence of a sequence $M_\ell$ of hypersurfaces in $B_4^{n+1}(0)$ (that satisfy the same assumptions as $M$ in the theorem) and, arguing by contradiction, assume that there exists $x_\ell \in M_\ell\cap B_{\frac{1}{2}}$ with $\limsup_{\ell \to \infty} |A_{M_\ell}(x_\ell)| = \infty$. Allard's compactness gives a (subsequential) stationary limit $V$ for $M_\ell$ (without relabelling the subsequence), in the sense of varifolds. By extracting a further subsequence (without relabelling) we also assume $x_\ell \to x \in \text{spt}\|V\|$. Proposition \ref{Prop:dim_red} (and Lemma \ref{lem:A_sheeting_both}) applied at $x$ contradicts $\limsup_{\ell \to \infty} |A_{M_\ell}(x_\ell)| = \infty$.

\medskip

Theorem \ref{thm:Bernstein6} follows by considering $M \cap B_{4R}^{n+1}(p)$ for any chosen $p \in M$, and translating (sending $p$ to $0$). As $R\to \infty$, the estimate in Theorem \ref{thm:curv_est_6} remains valid with the same $\beta$. This forces $A_M(p)=0$. Hence $A\equiv 0$ on $M$ and the result follows.

\part{Towards a compactness theory for branched stable minimal immersions} 
\label{partII}
 
We are interested, in this second part, in a wider class of immersed hypersurfaces $M$: we allow a singular set $\text{Sing}_M$ with locally finite $\mathcal{H}^{n-2}$-measure, or, more generally, vanishing $2$-capacity. Explicitly, for $U \subset \R^{n+1}$ open, and $\Sigma \subset U$ closed (in $U$) with $\text{cap}_2(\Sigma)=0$ (in particular\footnote{We refer to \cite{Evans_book} for details on capacity. In our context, the implication $\mathcal{H}^{n-2}(\Sigma)<\infty \Rightarrow \text{cap}_2(\Sigma)=0$ is implicitly proved in \cite{SchSim} when $\mathcal{H}^{n-2}(\Sigma)=0$ and refined in \cite{WicDG} for the case $\mathcal{H}^{n-2}(\Sigma)<\infty$ using a Federer-Ziemer argument.}, we allow $\Sigma$ to have locally finite $\mathcal{H}^{n-2}$-measure, that is, $\mathcal{H}^{n-2}(\Sigma \cap K)<\infty$ for every $K\subset \subset U$), we let $\iota:S \to U\setminus \Sigma$ be a (smooth) proper immersion, that we assume to be two-sided minimal and stable, with continuous unit normal $\nu$. Denoting by $\overline{M}$ the closure of $M=\iota(S)$ in $U$, we say that $x \in \text{Sing}_M$ if, for every $r>0$, $B_r^{n+1}(x) \cap \overline{M}$ is not the image of a smooth immersion. (In other words, a point in $\Sigma$ is genuinely singular if $M$ cannot be smoothly extended across it, as an immersion.)

As proved in \cite[(1.18) and Section 5]{SchSim}, the stationarity condition (with respect to the area functional) is valid for ambient deformations in $U$, that is, the integral varifold $|\iota_\sharp S|$ is stationary in $U$ (not only in $U\setminus \Sigma$). This follows from a suitable extension of the monotonicity formula, obtained at points in $\Sigma$, giving Euclidean area growth around all points in $\overline{M}\cap U$, combined with a standard capacity argument. (In fact, \cite{SchSim} shows that $\mathcal{H}^{n-1}(\Sigma)=0$ would be sufficient for this.)

\section{Proof of Theorem \ref{thm:eps_reg_tilt}}

\noindent \emph{The tilt function and the relevant PDE}. For a given fixed unit vector, that we assume without loss of generality to be the last coordinate vector $e_{n+1}$, consider the function $g=\left(1-(\nu \cdot e_{n+1})^2\right)^{1/2}$, well-defined on $S$. Clearly, $0\leq g\leq 1$. Letting $\nabla$ denote the metric gradient on $S$, it is immediate that $|\nabla g| \leq \sqrt{1-g^2}|A|$. This follows by direct computation, since 
\[|\nabla (\nu\cdot e_{n+1})| = |(D \nu) (e_{n+1}^T)|\leq |A||e_{n+1} - (\nu\cdot e_{n+1})\nu|=|A|g,\]
where $e_{n+1}^T$ denotes the tangential part of $e_{n+1}$ and $D\nu$ is the shape operator, and
\begin{equation}
 \label{eq:nabla_nu}
 \nabla g = \frac{-(\nu\cdot e_{n+1}) \nabla (\nu\cdot e_{n+1})}{\sqrt{1-(\nu \cdot e_{n+1})^2}}, \text{ that is, }g^2 |\nabla g|^2   = (1-g^2)|\nabla (\nu \cdot e_{n+1})|^2.
 \end{equation} 
We recall the standard Jacobi field equation $\Delta(\nu \cdot e_{n+1}) = -|A|^2 (\nu \cdot e_{n+1})$, or, equivalently,
\[-\Delta (\nu \cdot e_{n+1})^2 = -2 |\nabla (\nu \cdot e_{n+1})|^2 + 2|A|^2 (\nu \cdot e_{n+1})^2,\]
where $\Delta$ is the Laplace-Beltrami operator on $S$. This implies a PDE for $g$ on $S$ (using the relation \eqref{eq:nabla_nu}):  
\[(1-g^2)(2g\Delta g + 2|\nabla g|^2) = (1-g^2)\big(\Delta g^2\big)= -2  g^2 |\nabla g|^2 + 2|A|^2 (1-g^2)^2,\]
and therefore (in view of $|\nabla g|^2 \leq (1-g^2)|A|^2$ the following is well-defined) 
\begin{equation}
 \label{eq:PDE_for_g}
g \Delta g=  -\frac{|\nabla g|^2}{1-g^2} + |A|^2 (1-g^2).
\end{equation}

We recall that the following improved inequality (see \cite[(2.7)]{SchSim}) is implied by the minimality condition:
\begin{equation}
 \label{eq:improved_inequality_nablag}
 \frac{|\nabla(\nu \cdot e_{n+1})|^2}{(1-(\nu\cdot e_{n+1})^2)} \leq \left(1-\frac{1}{n}\right) |A|^2  \Leftrightarrow \frac{|\nabla g|^2}{1-g^2} \leq \left(1-\frac{1}{n}\right) |A|^2.
\end{equation}

\begin{oss}
The quantity $E_M(R)^2=\frac{1}{R^n}\int_{C_R} g^2$ (appearing in Theorem \ref{thm:eps_reg_tilt}) is the square of the scale-invariant tilt-excess of $|M|$ in $C_{R}=B^n_R(0)\times (-R, R)$, with respect to the hyperplane $\R^n\times \{0\}$ (orthogonal to $e_{n+1}$). As in Part \ref{partI}, with slight notational abuse we will write domains $D$, or $M\cap D$, to mean $\iota^{-1}(D)$, where $\iota:S \to C_{2R}$ is the immersion with image $M$. 
\end{oss}
\begin{oss}[\emph{height and tilt excess}]
\label{oss:height_excess}
We recall that the (scale-invariant) $L^2$ height-excess $\hat{E}_M(r)$, defined by $\hat{E}_M(r)^2=\frac{1}{r^{n+2}}\int_{M\cap C_{r}} |x_{n+1}|^2$, bounds (linearly) $E_M(\frac{r}{2})^2$. Indeed, stationarity implies, using the first variation formula with a vector field $x_{n+1} \varphi^2 e_{n+1}$, for $\varphi\in C^1_c(C_{2R})$ taken to be identically $1$ in $C_R$ and with $|\nabla \varphi|\leq \frac{1}{R}$, the validity of the inequality (see e.g.~\cite[Section 22]{SimonNotes})
\[\frac{1}{R^n}\int_{M\cap C_R} |\nabla x_{n+1}|^2  \leq \frac{2^{n+4}}{(2R)^{n+2}}\int_{M\cap C_{2R}} |x_{n+1}|^2,\]
and $|\nabla x_{n+1}| = |\text{proj}_{TM}(e_{n+1})| = \sqrt{1-(\nu\cdot e_{n+1})^2} =g$.
\end{oss} 

The proof of Theorem \ref{thm:eps_reg_tilt} will be carried out by means of an iteration \`{a} la De Giorgi, for which the fundamental lemma is an intrinsic weak Caccioppoli inequality, for level set truncations of $g$ (Lemmas \ref{lem:De_Giorgi_class_subsol_modified} and \ref{lem:Caccioppoli_g_sing} below).  

\begin{lem}
\label{lem:De_Giorgi_class_subsol_modified} 
Let $M$ be as above. Then for any $k\in [0,\frac{1}{2n}]$ and $\phi\in C^{0,1}_c(S)$ we have
\[\frac{1}{2n}\int_{\{g>k\}} |\nabla g|^2 \left(1-\frac{k}{g} \right) \phi^2 \leq  \int {(g-k)^+}^2 |\nabla \phi|^2,\]
where $(g-k)^+$ denotes the function $(g-k)^+=\left\{\begin{array}{ccc}         g - k & \text{ when } g> k\\
0 & \text{ when } g\leq k              
\end{array} \right.$.
\end{lem}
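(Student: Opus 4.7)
My plan is to mimic the structure of Lemma \ref{lem:Caccioppoli_A}: test the stability inequality with $\varphi=(g-k)^+\phi$, perform an integration by parts that invokes the PDE \eqref{eq:PDE_for_g}, and close the estimate using the improved minimality inequality \eqref{eq:improved_inequality_nablag}. Concretely, stability gives $\int |A|^2 ((g-k)^+)^2 \phi^2 \leq \int |\nabla((g-k)^+\phi)|^2$. Expanding the right-hand side and rewriting the cross term as $\tfrac{1}{2}\int \nabla((g-k)^+)^2\cdot \nabla\phi^2$, an integration by parts (justified since $((g-k)^+)^2\in W^{2,\infty}_{\mathrm{loc}}$) yields
\[
\int |\nabla((g-k)^+ \phi)|^2 \;=\; -\int (g-k)^+ \Delta g\,\phi^2 \,+\, \int ((g-k)^+)^2 |\nabla\phi|^2,
\]
after a happy cancellation between two $\int_{\{g>k\}}|\nabla g|^2\phi^2$ contributions coming from $\Delta((g-k)^+)^2 = 2(g-k)^+\Delta g + 2\chi_{\{g>k\}}|\nabla g|^2$ and from the expansion of $|\nabla((g-k)^+\phi)|^2$.

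Since $(g-k)^+$ is supported in $\{g>k\}\subset\{g>0\}$, we may divide \eqref{eq:PDE_for_g} by $g$ there to obtain $(g-k)^+\Delta g = -(1-k/g)\tfrac{|\nabla g|^2}{1-g^2} + (1-k/g)|A|^2(1-g^2)$. Substituting this and combining with the stability bound yields
\[
\int_{\{g>k\}}|A|^2\bigl[(g-k)^2+(1-k/g)(1-g^2)\bigr]\phi^2 \;\leq\; \int_{\{g>k\}} (1-k/g)\,\tfrac{|\nabla g|^2}{1-g^2}\,\phi^2 \,+\, \int ((g-k)^+)^2|\nabla\phi|^2.
\]
The crux of the argument is the algebraic identity
\[
(g-k)^2+(1-k/g)(1-g^2) \;=\; (1-k/g)(1-gk) \qquad \text{on } \{g>k\},
\]
which displays $(1-k/g)$ as a common factor on the $|A|^2$ side. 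I expect recognising this factorisation to be the main (and essentially only) nonobvious step.

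Once this is in hand, \eqref{eq:improved_inequality_nablag}, i.e.\ $|A|^2\geq \tfrac{n}{n-1}\tfrac{|\nabla g|^2}{1-g^2}$, lets us absorb the $|A|^2$ term into a multiple of $|\nabla g|^2/(1-g^2)$; after subtracting the common $(1-k/g)\tfrac{|\nabla g|^2}{1-g^2}\phi^2$ piece from both sides, one arrives at
\[
\int_{\{g>k\}}(1-k/g)\,\tfrac{1-ngk}{(n-1)(1-g^2)}\,|\nabla g|^2\,\phi^2 \;\leq\; \int ((g-k)^+)^2|\nabla\phi|^2.
\]
Finally, the hypothesis $k\leq 1/(2n)$ together with $g\leq 1$ gives $1-ngk\geq 1/2$, and $1-g^2\leq 1$ gives $1/(1-g^2)\geq 1$, so the prefactor on the left is at least $1/(2(n-1))\geq 1/(2n)$, which yields the stated inequality (indeed with a slightly better constant). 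Regularity is not an issue on the support of $\phi\subset S$ (which lies outside the preimage of the ambient singular set $\Sigma$, so $g$ is smooth there), so all integrations by parts and pointwise PDE manipulations are fully rigorous.
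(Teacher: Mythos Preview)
Your proof is correct and follows essentially the same route as the paper's: test stability with $(g-k)^+\phi$, integrate by parts, substitute the PDE \eqref{eq:PDE_for_g}, and close with the improved inequality \eqref{eq:improved_inequality_nablag}. The only difference is in bookkeeping: the paper bounds the $\tfrac{|\nabla g|^2}{1-g^2}$ term from above by $(1-1/n)|A|^2$, collects all $|A|^2$ terms to obtain the coefficient $(1-k/g)(\tfrac{1}{n}-kg)$, and only at the end uses the cruder $|\nabla g|^2\le |A|^2$; you instead keep the $|A|^2$ side intact, bound it below via \eqref{eq:improved_inequality_nablag}, and subtract---which is why you end up with the slightly sharper constant $\tfrac{1}{2(n-1)}$ in place of $\tfrac{1}{2n}$.
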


\begin{proof}
We use the stability condition, whose analytic form is the validity of 
\[\int |A|^2 \eta^2 \leq \int|\nabla \eta|^2\]
for all $\eta\in C^1_c(S)$. A standard approximation argument implies that $\eta\in C^{0,1}_c(S)$ is also allowed and we choose $\eta = (g-k)^+ \phi$, where $\phi\in C^{0,1}_c(S)$ (as in the statement). We compute (on the right-hand-side of the stability inequality)
\begin{equation*}
 \begin{aligned}
 \int |\nabla((g-k)^+\phi)|^2 = \\
 \int|\nabla(g-k)^+|^2 \phi^2 + \underbrace{2 \int (g-k)^+\, \phi\, \nabla(g-k)^+ \, \nabla\phi}_{\frac{1}{2}\int \nabla({(g-k)^+}^2) \, \nabla(\phi^2)} + \int {(g-k)^+}^2 |\nabla \phi|^2.
 \end{aligned}
\end{equation*}
We note that the function ${(g-k)^+}^2$ is in $C^1\cap W^{2,\infty}(S)$. Indeed, $\nabla({(g-k)^+}^2)=2{(g-k)^+}\nabla g$ and this function is locally Lipschitz. In particular, we have that $\Delta({(g-k)^+}^2)$ is the $L^\infty$ function that vanishes in the complement of $\{g\geq k\}$ and is equal to  $2 {(g-k)^+} \Delta{(g-k)^+} + 2|\nabla({(g-k)^+}|^2$ on $\{g<k\}$. Hence we can integrate by parts and the braced term becomes
\[-\frac{1}{2}\int \Delta({(g-k)^+}^2) \, \phi^2 = - \int |\nabla{(g-k)^+}|^2 \, \phi^2 - \int_{\{g>k\}} {(g-k)^+} \Delta{(g-k)^+} \, \phi^2.\]
The right-hand-side of the stability inequality is therefore
\begin{equation*}
 \begin{aligned}
- \int_{\{g>k\}} (g-k) \Delta g \, \phi^2 + \int {(g-k)^+}^2 |\nabla \phi|^2 \stackrel{\text{ by }\eqref{eq:PDE_for_g}}{=}\\
\int_{\{g>k\}} \Bigg(1-\frac{k}{g}\Bigg) \frac{|\nabla g|^2}{1-g^2} \, \phi^2 - \int_{\{g>k\}} \Bigg(1-\frac{k}{g}\Bigg) |A|^2 (1-g^2) \, \phi^2+ \int {(g-k)^+}^2 |\nabla \phi|^2.
 \end{aligned}
\end{equation*}
(When $k=0$ we do not need to multiply the PDE \eqref{eq:PDE_for_g} by $\frac{g-k}{g}=1-\frac{k}{g}$.) We now use the improved inequality \eqref{eq:improved_inequality_nablag} (for the first integrand in the last expression) and find, from the stability inequality,
\begin{equation*}
 \begin{aligned}  
\int_{\{g>k\}} |A|^2 (g-k)^2 \phi^2 \leq \int_{\{g>k\}} \Bigg(1-\frac{1}{n}\Bigg) \Bigg(1-\frac{k}{g}\Bigg) |A|^2 \, \phi^2\\
 - \int_{\{g>k\}} \Bigg(1-\frac{k}{g}\Bigg) |A|^2 (1-g^2) \, \phi^2+ \int {(g-k)^+}^2 |\nabla \phi|^2.
 \end{aligned}
\end{equation*} 
Moving all terms containing $|A|^2$ to the left-hand-side we compute
\begin{equation*}
 \begin{aligned}
(g-k)^2 - \Bigg(1-\frac{1}{n}\Bigg) \Bigg(1-\frac{k}{g}\Bigg) +\Bigg(1-\frac{k}{g}\Bigg) (1-g^2)=\\
(g-k)\Bigg(g-k -\frac{1}{g}\Big(1-\frac{1}{n} -1 + g^2\Big)\Bigg) = \frac{g-k}{g}\bigg(g^2-kg + \frac{1}{n} - g^2\bigg),
 \end{aligned}
\end{equation*} 
which gives
\[\int_{\{g>k\}} |A|^2 \,\frac{(g-k)}{g}\left(\frac{1}{n} - kg\right)\, \phi^2 \leq  \int {(g-k)^+}^2 |\nabla \phi|^2.\]
As $g\in[0,1]$, if we restrict $k\in[0,\frac{1}{2n}]$ as in the hypotheses we get $\frac{1}{n} - kg \geq \frac{1}{2n}$, hence
\begin{equation}
 \label{eq:Schoen_ineq_k}
\frac{1}{2n}\int_{\{g>k\}} |A|^2 \left(1-\frac{k}{g} \right) \phi^2 \leq  \int {(g-k)^+}^2 |\nabla \phi|^2.
\end{equation}
Using $|\nabla g|\leq |A|$ 
we reach
\[\frac{1}{2n}\int_{\{g>k\}} |\nabla g|^2 \left(1-\frac{k}{g} \right) \phi^2 \leq  \int {(g-k)^+}^2 |\nabla \phi|^2.\]
\end{proof}

\begin{lem}
\label{lem:Caccioppoli_g_sing} 
Let $M$ be as above. Then for any $k\in [0,\frac{1}{2n}]$ and $\varphi\in C^{0,1}_c(U)$ we have
\[\frac{1}{2n}\int_{M\cap \{g>k\}} |\nabla g|^2 \left(1-\frac{k}{g} \right) \varphi^2 \leq  \int_M {(g-k)^+}^2 |\nabla \varphi|^2,\]
where $(g-k)^+$ denotes the function $(g-k)^+=\left\{\begin{array}{ccc} 
g - k & \text{ when } g> k\\
0 & \text{ when } g\leq k        
\end{array} \right.$.
\end{lem}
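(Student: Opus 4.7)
The plan is to deduce this from Lemma \ref{lem:De_Giorgi_class_subsol_modified} via a standard $2$-capacity cut-off argument that removes a neighbourhood of $\Sigma$ from the support of the test function. The only point is that a function $\varphi\in C^{0,1}_c(U)$ need not have compact support on $S$, since the proper immersion $\iota$ is only proper as a map $S\to U\setminus \Sigma$, and points in $S$ may accumulate at $\iota^{-1}(\Sigma)=\emptyset$ from within $S$. Multiplying by a cut-off supported away from $\Sigma$ restores this.

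First, using $\mathrm{cap}_2(\Sigma)=0$ together with the fact that $|\iota_\sharp S|$ is a stationary integral varifold in $U$ with Euclidean mass growth (as recalled from \cite{SchSim,WicDG} in the opening of Part \ref{partII}), I construct, for each $\eps>0$, a function $\chi_\eps\in C^{0,1}_c(U)$ with $0\le\chi_\eps\le 1$, $\chi_\eps\equiv 1$ in a (relatively open) neighbourhood of $\Sigma\cap\mathrm{spt}(\varphi)$, and
\[\int_M |\nabla\chi_\eps|^2\longrightarrow 0\quad\text{as }\eps\to 0.\]
This is the standard capacity cut-off used in \cite{SchSim,WicDG}: one starts from ambient $2$-capacity cut-offs on $\R^{n+1}$, truncates to $[0,1]$, and uses the Euclidean upper area bound on $M$ (from the monotonicity formula) to transfer smallness of the $L^2$ gradient energy from $\R^{n+1}$ to $M$.

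Second, set $\phi_\eps:=(1-\chi_\eps)\,\varphi$. Since $\phi_\eps$ vanishes in a neighbourhood of $\Sigma$, the pull-back $\phi_\eps\circ\iota$ lies in $C^{0,1}_c(S)$, so Lemma \ref{lem:De_Giorgi_class_subsol_modified} applies and yields
\[\frac{1}{2n}\int_{M\cap\{g>k\}}|\nabla g|^2\Bigl(1-\tfrac{k}{g}\Bigr)(1-\chi_\eps)^2\varphi^2 \leq \int_M {(g-k)^+}^2\,|\nabla((1-\chi_\eps)\varphi)|^2.\]
Expand the right-hand side as
\[\int_M {(g-k)^+}^2(1-\chi_\eps)^2|\nabla\varphi|^2 \;-\; 2\int_M {(g-k)^+}^2(1-\chi_\eps)\varphi\,\nabla\varphi\!\cdot\!\nabla\chi_\eps \;+\;\int_M {(g-k)^+}^2\varphi^2|\nabla\chi_\eps|^2.\]

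Third, I pass to the limit $\eps\to 0$. Using that $0\le g\le 1$, so $(g-k)^+\le 1$ and $\varphi,\nabla\varphi$ are bounded with bounded support: the third integral is bounded by $\|\varphi\|_\infty^2\int_M|\nabla\chi_\eps|^2\to 0$, and the cross term is bounded, by Cauchy--Schwarz, by $2\|\varphi\nabla\varphi\|_\infty\,\mathcal{H}^n(M\cap\mathrm{spt}(\varphi))^{1/2}\bigl(\int_M|\nabla\chi_\eps|^2\bigr)^{1/2}\to 0$; the first integral converges to $\int_M {(g-k)^+}^2|\nabla\varphi|^2$ by dominated convergence (choose $\chi_\eps$ so that $\chi_\eps\to 0$ a.e.~on $M$, which is automatic since $\Sigma$ has zero $\mathcal{H}^n$-measure on $M$). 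On the left-hand side, choosing the cut-offs monotonically (so $\chi_\eps$ decreases as $\eps\to 0$), monotone convergence gives the full $\int_{M\cap\{g>k\}}|\nabla g|^2(1-k/g)\varphi^2$. Combining these limits yields the claimed inequality.

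The main (and essentially only) obstacle is the capacity step producing $\chi_\eps$ with $\int_M|\nabla\chi_\eps|^2\to 0$: the ambient $2$-capacity bound must be transferred to the $n$-dimensional surface $M$, and this is where the Euclidean area growth of $|\iota_\sharp S|$ on $U$ (i.e.~stationarity up to $\Sigma$) enters crucially. Once that is in hand, the rest is a routine limiting procedure and the factor $\tfrac{1}{2n}$ on the left-hand side is preserved without loss.
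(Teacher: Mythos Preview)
Your proposal is correct and follows essentially the same approach as the paper: reduce to Lemma~\ref{lem:De_Giorgi_class_subsol_modified} for test functions supported away from $\Sigma$, then pass to the limit via the standard $2$-capacity cut-off argument (using Euclidean area growth on $M$) referenced in \cite{SchSim,WicDG}. The paper's own proof is terser and simply cites this capacity argument without spelling out the limiting procedure you detail.
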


\begin{proof}
The statement is just Lemma \ref{lem:De_Giorgi_class_subsol_modified} when $\varphi \in C^{0,1}_c(U \setminus \Sigma)$. (We are implicitly choosing $\phi=\varphi \circ \iota$; the immersion is proper so $\varphi \circ \iota \in C^1_c(S)$.) Taking this as starting point, the extension of the inequality to $\varphi\in C^{0,1}_c(U)$ relies on the Euclidean area growth of $n$-area (valid at all points in $\overline{M}$, as recalled above) and on the assumption that $\text{cap}_2(\Sigma)=0$. The (now standard) $2$-capacity argument is carried out in \cite{SchSim} for the case $\mathcal{H}^{n-2}(\Sigma)=0$ and in \cite{WicDG} for the case $\mathcal{H}^{n-2}(\Sigma)<\infty$.
\end{proof}

\begin{oss}
In the case $k=0$, \eqref{eq:Schoen_ineq_k} is the well-known Schoen inequality, \cite{Sch}, \cite[Lemma 1]{SchSim}. The instance $k=0$ of the lemma gives the intrinsic Caccioppoli inequality $\frac{1}{2n}\int |\nabla g|^2  \varphi^2 \leq  \int g^2 |\nabla \varphi|^2$.
\end{oss}

We will employ Lemma \ref{lem:Caccioppoli_g_sing}, with suitable choices of $\varphi$.
We will obtain a superlinear rate of decay for the $L^2$-norm of $(g-k)^+$ in $C_{r}$ as $k\in \R$ grows from $0$ to $\frac{1}{2n}$, and $r$ decreases from the initial scale $R$ to $\frac{R}{2}$. Via the elementary Lemma \ref{lem:analysis1}, such a decay forces $(g-\frac{1}{2n})^+$ to vanish in $C_{\frac{R}{2}}$, as long as the $L^2$-norm of $g$ is sufficiently small in $C_R$. This will establish Theorem \ref{thm:eps_reg_tilt}.

\begin{proof}[proof of Theorem \ref{thm:eps_reg_tilt} for $n\geq 3$]
We consider the dyadic sequences (respectively increasing and decreasing) $k_\ell = \frac{d}{2n}\left(1-\frac{1}{2^{\ell-1}}\right)$ for $d\in(0,1]$ (for the moment arbitrary), and $R_\ell = \frac{R}{2} + \frac{R}{2^\ell}$ for $\ell \in \{1, 2, \ldots \}$.

We take the inequality of Lemma \ref{lem:Caccioppoli_g_sing} with $k_\ell$ in place of $k$, and use the inclusion $\{g>k_\ell\}\supset \{g>k_{\ell+1}\}$:
\[\frac{1}{2n}\int_{\{g>k_{\ell+1}\}} |\nabla g|^2 \left(1-\frac{k_\ell}{g} \right) \varphi^2 \leq  \int {(g-k_\ell)^+}^2 |\nabla \varphi|^2;\]
on the relevant domain on the left-hand-side, $\{g>k_{\ell+1}\}$, we have  $1-\frac{k_\ell}{g}\geq 1-\frac{k_\ell}{k_{\ell+1}} \geq \frac{1}{2^\ell}$, therefore
\[\int_{\{g>k_{\ell+1}\}} |\nabla g|^2  \varphi^2 \leq (2n)  2^\ell  \int {(g-k_\ell)^+}^2 |\nabla \varphi|^2.\]
Since $|\nabla \big((g-k_{\ell+1})^+ \varphi\big)|^2 \leq 2\chi_{\{g>k_{\ell+1}\}}|\nabla g|^2 \varphi^2 + 2{(g-k_{\ell+1})^+}^2 |\nabla \varphi|^2$, and by definition $(g-k_{\ell+1})^+ \leq (g-k_{\ell})^+$, we find
\begin{equation}
\label{eq:estimate_before_MS} 
\int_{\{g>k_{\ell+1}\}} \Big|\nabla \big((g-k_{\ell+1})^+ \varphi \big)\Big|^2   \leq 2((2n)  2^\ell +1)  \int {(g-k_\ell)^+}^2 |\nabla \varphi|^2.
\end{equation}
Next (from now we will use $n\geq 3$), we will use (for the left-hand-side of \eqref{eq:estimate_before_MS}) the following Michael-Simon inequality (\cite{Bre,MS}) on the minimally immersed hypersurface $M$, for the function $\varphi (g-k_{\ell+1})^+$:
\begin{equation}
 \label{eq:Michael_Simon_ineq}
\left(\int \left| \varphi  (g-k_{\ell+1})^+ \right|^{\frac{2n}{n-2}}\right)^{\frac{n-2}{n}}\leq C_{MS} \int \left|\nabla \left(\varphi  (g-k_{\ell+1})^+\right) \right|^{2},
\end{equation}
with $C_{MS}$ the dimensional constant given after \eqref{eq:MichaelSimon_A}. 

Simultaneously, we choose $\varphi$, as follows. For $r >\rho$ chosen in $(0,R]$ we will consider $\varphi$ of the type $\varphi(x, x_{n+1})=\tilde{\varphi}(x)\tilde{\psi}(x_{n+1})$, with $\tilde{\varphi}:\R^n \to \R$ identically equal to $1$ on $B_{\rho}^{n}(0)$, vanishing in the complement of $B_{r}^{n}(0)$ and with $|D \tilde{\varphi}|\leq \frac{\sqrt{2}}{r-\rho}$; with $\tilde{\psi} \in C^\infty_c(\R)$ identically equal to $1$ on $[-\rho, \rho]$, vanishing in the complement of $(-r, r)$, with $|\tilde{\psi}^\prime|\leq \frac{\sqrt{2}}{r-\rho}$. Then, for each $\ell$, we choose $\tilde{\varphi}_\ell$ and $\tilde{\psi}_\ell$ with $\rho=R_{\ell+1}$, $r=R_\ell$, so that $r-\rho=R_{\ell}-R_{\ell+1}=\frac{R}{2^{\ell+1}}$, and $\varphi_\ell=\tilde{\varphi}_\ell \tilde{\psi}_\ell$. Note that $|\nabla \varphi_\ell|\leq \frac{2}{R_{\ell}-R_{\ell+1}}$, $\varphi_\ell \equiv 1$ on $C_{R_{\ell+1}}$ and $\varphi_\ell \equiv 0$ in the complement of $C_{R_{\ell}}$. 
Combining \eqref{eq:estimate_before_MS} and \eqref{eq:Michael_Simon_ineq}, with the chosen $\varphi_\ell$ in place of $\varphi$, we find
\begin{equation}
 \label{eq:estimate_after_MS}
 \begin{aligned}
\left(\int_{M\cap C_{R_{\ell+1}}} \left| (g-k_{\ell+1})^+ \right|^{\frac{2n}{n-2}}\right)^{\frac{n-2}{n}}\leq \left(\int \left| \varphi_\ell  (g-k_{\ell+1})^+ \right|^{\frac{2n}{n-2}}\right)^{\frac{n-2}{n}}\leq \\
\leq C_{MS}(n) \frac{2((2n)  2^\ell +1) 4^{\ell+2}}{R^2} \int_{M\cap C_{R_{\ell}}} {(g-k_\ell)^+}^2.
\end{aligned}
\end{equation}
H\"older's inequality further gives  
\begin{equation}
 \label{eq:Holder_ineq_g}
 \begin{aligned}
\int_{M\cap C_{R_{\ell+1}}} {(g-k_{\ell+1})^+}^{2}\leq \\
\left(\int_{M\cap C_{R_{\ell+1}}} {(g-k_{\ell+1})^+}^{\frac{2n}{n-2}}\right)^{\frac{n-2}{n}} \mathcal{H}^n\Big(M \cap \{g>k_{\ell+1}\} \cap C_{R_{\ell+1}} \Big)^{\frac{2}{n}}.
 \end{aligned}
\end{equation}
Noting that on the set $\{g>k_{\ell+1}\}$ we have $(g-k_\ell)^+ > \frac{d}{n\,2^{\ell+1}}$, and using the inclusion $C_{R_{\ell+1}} \subset C_{R_{\ell}}$, the last factor in \eqref{eq:Holder_ineq_g} is bounded above (thanks to the standard Markov's inequality) by
\begin{equation}
\label{eq:Markov_g}
 \begin{aligned}
\mathcal{H}^n\Bigg(M\cap \left\{{(g-k_{\ell})^+}^2>\frac{d^2}{(n\,2^{\ell+1})^2}\right\} \cap C_{R_{\ell}} \Bigg)^{2/n}\leq  \\
\left(\frac{n^2\,4^{\ell+1}}{d^2} \int_{M\cap C_{R_{\ell}}}{(g-k_\ell)^+}^2 \right)^{2/n}.
 \end{aligned}
\end{equation}
From \eqref{eq:Holder_ineq_g}, using \eqref{eq:Markov_g} for the second factor on the right-hand-side, and using \eqref{eq:estimate_after_MS} for the first factor on the right-hand-side, we have
\begin{equation*}
 \begin{aligned}
\int_{M\cap C_{R_{\ell+1}}} {(g-k_{\ell+1})^+}^{2}\leq \\
C_{MS}(n) \frac{8((2n)  2^\ell +1)4^{\ell+1}}{d^{\frac{4}{n}} R^2} n^{\frac{4}{n}}(4^{\frac{2}{n}})^{\ell+1} \Big(\int_{M\cap C_{R_{\ell}}} {(g-k_\ell)^+}^2\Big)^{1+\frac{2}{n}}.
 \end{aligned}
\end{equation*}
Writing $G_\ell=\int_{C_{R_{\ell}}} {(g-k_{\ell})^{+}}^2$, this implies
\begin{equation}
 \label{eq:decay_estimate}
G_{\ell+1} \leq \underbrace{C_{MS}(n)\frac{32 n}{d^{\frac{4}{n}} R^2}n^{4/n} (4\cdot 4^{2/n})}_{c(n, R, d)}\, (8\cdot 4^{2/n})^{\ell} \,G_{\ell}^{1+\frac{2}{n}}.
\end{equation}
The superlinear decay estimate \eqref{eq:decay_estimate}, $G_{\ell+1}\leq c(n, R, d) C^\ell G_{\ell}^{1+\frac{2}{n}}$ with $C=2\cdot 4^{1+\frac{2}{n}}$, forces $G_\ell\to 0$ as $\ell\to \infty$, as long as $G_1$ is sufficiently small, in a quantified fashion determined by $c(n, R, d)$ and $C$. We make it now explicit, using Lemma \ref{lem:analysis1}.

With the initial choice $d=1$, the smallness condition on $G_1$ is written, for the scale-invariant tilt-excess ($R_1=R$ so $E_M(R)^2=\frac{1}{R^n}G_1$), as
\begin{equation}
 \label{eq:dim_constant_1}
E_M(R)^2\leq \frac{1}{\left(R^2 c(n, R, 1) C^{\frac{n+2}{2}}\right)^{n/2}} = \left(\frac{1}{C_{MS}(n) 32 \,n^{1+\frac{4}{n}}  4^{1+\frac{2}{n}} 2^{\frac{n+2}{2}}  4^{\frac{(n+2)^2}{2n}}}\right)^{n/2},
\end{equation}
where the last term makes explicit the dimensional constant $k(n)$ in Theorem \ref{thm:eps_reg_tilt}.

The convergence $G_\ell\to 0$ implies $\int_{C_{\frac{R}{2}}} {(g-\frac{1}{2n})^{+}}^2=0$, that is, $g\leq \frac{1}{2n}$ a.e.~on $M\cap C_{\frac{R}{2}}$. By smoothness of $\iota$, then $g\leq \frac{1}{2n}$ on $M\cap C_{\frac{R}{2}}$.

More generally, with $d\in (0, 1]$, we find that, if
\begin{equation*}
E_M(R)^2\leq \frac{1}{\left(R^2 c(n, R, d) C^{\frac{n+2}{2}}\right)^{n/2}} = \frac{d^{2}}{\Big(C_{MS}(n) 32 \,n^{1+\frac{4}{n}}  4^{1+\frac{2}{n}} 2^{\frac{n+2}{2}}  4^{\frac{(n+2)^2}{2n}}\Big)^{n/2}},
\end{equation*}
then $g\leq \frac{d}{2n}$ on $M\cap C_{\frac{R}{2}}$. In other words, we have proved that, in the regime $E_M(R)^2\leq k(n)$, we have (for an explicit dimensional constant $c(n)$)
\[\sup_{M\cap C_{\frac{R}{2}}} g \leq c(n) E_M(R).\]
\end{proof}

\begin{oss}
\label{oss:CI_g}
For $k=0$, the inequality in Lemma \ref{lem:Caccioppoli_g_sing} is an \emph{intrinsic} Caccioppoli inequality (we have the intrinsic gradient on $M$, rather than the gradient $D$ in $\R^n$ as in the case of De Giorgi \cite{DG}, see also Remark \ref{oss:CI_A}). For $k \in (0, \frac{1}{2n}]$, on the other hand, we only have a weak intrinsic Caccioppoli inequality, due to the multiplicative factor $\left(1-\frac{k}{g} \right)^+$. As seen also for Lemma \ref{lem:Caccioppoli_A}, this weaker inequality is sufficient to implement the iterative scheme. While in \cite{DG} it is the linearity of the PDE that permits to obtain the classical Caccioppoli inequality for $(u-k)^+$, in our case the PDE for $g$ escapes the De Giorgi--Nash--Moser framework: in fact, the PDE is a consequence of the minimality of $M$ alone, which would permit e.g.~catenoidal necks, with $g$ reaching the value $1$ under any smallness assumption on the $L^2$ height- or tilt-excess. The stability condition provides sufficient control on the non-linearity of the PDE (\ref{eq:PDE_for_g}) to obtain the weak intrinsic Caccioppoli inequality. We note explicitly that Lemma \ref{lem:Caccioppoli_g_sing} is only valid for truncations at sufficiently small level sets (hence the smallness requirement in Theorem \ref{thm:eps_reg_tilt}). 
\end{oss}

\begin{proof}[proof of Theorem \ref{thm:eps_reg_tilt} for $n=2$]
The case $n=2$ requires a modification, as the exponent $\frac{2n}{n-2}$ is not well-defined in that case. The choices of $k_\ell$, $R_\ell$, $\varphi_\ell$ remain the same. We start from \eqref{eq:estimate_before_MS} (only after which we used $n\geq 3$), choosing $\varphi$ in \eqref{eq:estimate_before_MS} to be $\varphi_\ell$ (recall that $|\nabla \varphi_\ell|\leq \frac{2^{\ell+1}}{R}$ and that $\text{spt} \varphi_\ell \subset C_{R_\ell}$). In what follows, $\sigma, \sigma', \sigma''$ denote explicitly determinable constants. We have 
\[\int \Big|\nabla \big((g-k_{\ell+1})^+ \varphi_\ell \big)\Big|^2   \leq \frac{4^\ell \sigma}{R^2} \int_{C_{R_\ell} \cap M} {(g-k_\ell)^+}^2 .\]
We use H\"older's inequality
\[\Big(\int  \Big|\nabla \big((g-k_{\ell+1})^+ \varphi_\ell \big)\Big|\Big)^{2}  \leq  \Big(\int  \Big|\nabla \big((g-k_{\ell+1})^+ \varphi_\ell \big)\Big|^2\Big)  \mathcal{H}^2\big(M\cap \{g>k_{\ell+1}\} \cap C_{R_\ell}\big), \]
the Michael--Simon inequality 
\[\int  \big( (g-k_{\ell+1})^+ \varphi \big)^2 \leq C_{MS}^2 \Big(\int \Big|\nabla \big((g-k_{\ell+1})^+ \varphi \big)\Big|\Big)^{2}\]
and the following consequence of Markov's inequality (as justified earlier)
\[\mathcal{H}^2\big(M \cap \{g>k_{\ell+1}\}\cap  C_{R_{\ell}}  \big) \leq  \frac{\sigma' 4^{\ell}}{d^2} \int_{M\cap C_{R_{\ell}}}{(g-k_\ell)^+}^2 .\]
Writing $G_\ell = \int_{M\cap C_{R_{\ell}}}{(g-k_\ell)^+}^2$, combining these inequalities we find
\[G_{\ell+1} \leq \frac{16^\ell \sigma''}{R^2 d^2} G_\ell^2.\]
At this stage, Lemma \ref{lem:analysis1} gives that, if $G_1 \leq  \frac{R^2 d^2}{16 \sigma''}$ then $G_\ell \to 0$ as $\ell \to \infty$. In other words, given $d\in (0,1]$, if $E_M^2(R)=\frac{1}{R^2}\int_{M\cap C_R} g^2 \leq \frac{d^2}{16 \sigma''}$, then $\sup_{M\cap C_{\frac{R}{2}}} g\leq \frac{d}{2n}$. The conclusion of Theorem \ref{thm:eps_reg_tilt} is thus proved for $n=2$: in the smallness regime $E_M^2(R) \leq \frac{1}{16 \sigma''}$ we have the control $\sup_{M\cap C_{\frac{R}{2}}} g\leq c(n)E_M(R)$.
\end{proof}

\section{Proof of Theorems \ref{thm:immersion_sing_sheeting}, \ref{thm:immersion_smooth_sheeting},  \ref{thm:SchSim}}

\begin{proof}[Proof of Theorem \ref{thm:immersion_smooth_sheeting}]
The pointwise bound $g\leq \frac{1}{2n}$ obtained in Theorem \ref{thm:eps_reg_tilt} implies the decomposition result by elementary arguments. 
Being an immersion, $\iota$ is locally a diffeomorphism with its image, that is, for every $X\in S$ there exists a neighbourhood $D_X$ such that $\left.\iota\right|_{D_X}$ is an embedded disk. The bound on $g$ implies that that there exists a choice of continuous unit normal $\nu$ such that $\nu\cdot e_{n+1} \geq \frac{\sqrt{(2n)^2 -1}}{2n}$. Denote by $\pi$ the projection $\R^n\times \R \to \R^n$. Then (for $D_X$ sufficiently small) the disk $\iota(D_X)$ is a smooth graph over its projection. We thus have that $\left.\iota\right|_{\iota^{-1}(C_{R/2})}$ is a local diffeomorphism with $B^n_{R/2}(0)$. Fix a connected component of $\iota^{-1}(C_{R/2})$, which we denote $S_0$. Then $\left.\iota\right|_{S_0}: S_0 \to B^n_{R/2}(0)$ is a local diffeomorphism. 

The condition on $\nu\cdot e_{n+1}$ guarantees that $\left.\iota\right|_{S_0}$ is transverse to any line of the form $\{q\}\times \R$ (oriented by $e_{n+1}$) and the intersection is always positive. Moreover, the intersection index of $M$ with such lines is constant (since $\sup_{M \cap C_{\frac{R}{2}}} |x_{n+1}| < \frac{R}{2}$, $M \cap C_{\frac{R}{2}}$ has no boundary in $B^n_{\frac{R}{2}}(0) \times \R$). Therefore $\iota^{-1}(\{q\}\times \R)$ is a subset of $S_0$ with fixed cardinality $N\in \N$, regardless of $q$. (The immersion is proper, therefore there can only be finitely many points of intersection.) 

The above observations imply that $\left.\iota\right|_{S_0}$ is a $N$-cover of $B^n_{R/2}(0)$. On the other hand, the ball $B^n_{R/2}(0)$ is its own universal cover (and $S_0$ is connected), so $N=1$. We have proved that each connected component of $\iota^{-1}(C_{R/2})$ is mapped (by $\iota$) to a (smooth) graph over $B^n_{R/2}(0)$, which provides the smooth functions $v_j$ in the conclusion of Theorem \ref{thm:immersion_smooth_sheeting} (where $j$ ranges over the set of connected components, which are finitely many because $\iota$ is proper).

\medskip

At this stage, one can follow the arguments of De Giorgi \cite{DG}, or directly invoke the De Giorgi--Nash--Moser theory, to conclude that $g$ is H\"older continuous on every $\text{graph}(v_j)$, and that each $v_j$ is in $C^{1,\alpha}(B^n_{R/2}(0))$, with the estimate $\|\nabla v_j\|_{C^{0,\alpha}(B^n_{R/2}(0))} \leq C(n) E_0$. Higher regularity (and the analogous estimate for the $C^{k,\alpha}$-norms) follow from Schauder theory (using the Schoen inequality to control the $L^2$ norm of $A$ by the tilt excess). 
\end{proof}

\begin{proof}[Proof of Theorem \ref{thm:immersion_sing_sheeting}]
The arguments given for the graph decomposition for Theorem \ref{thm:immersion_smooth_sheeting} lead to the conclusion that $\iota$ restricted to any connected component $S_0$ of $S$ is a local diffeomorphism and an $N$-cover of $B^n_{R/2}(0) \setminus \pi(\Sigma)$. This relies on the observation that $B^n_{R/2}(0) \setminus \pi(\Sigma)$ is open and (path) connected (a consequence of the fact that $\Sigma$ is closed with $\mathcal{H}^{n-1}(\Sigma)=0$, which follows from $\text{cap}_2(\Sigma)=0$). This guarantees the possibility to choose a normal that has positive intersections with lines $\{q\}\times \R$ and the constancy of the intersection index of $M$ with such lines for $q\in B^n_{R/2}(0) \setminus \pi(\Sigma)$. 
 
At this stage, we have a description of $M \cap C_{R/2}$ as graph of a smooth $q$-valued function on $B^n_{R/2}(0) \setminus \pi(\Sigma)$. For any $q \in B^n_{R/2}(0)\setminus \pi(\Sigma)$, by ordering the values $\Pi \Big(M\cap (\{q\}\times \R)\Big) \subset \R$ increasingly, where $\Pi$ is the projection onto the second factor of $B^n_{R/2}(0) \times \R$, we obtain $q$ Lipschitz functions $u_j:B^n_{R/2}(0) \setminus \pi(\Sigma) \to \R$, with Lipschitz constant $\frac{1}{2n}$, $u_j \leq u_{j+1}$ for all $j \in \{1, \ldots, Q-1\}$, which we can extend (preserving the Lipschitz constant) to $u_j:B^n_{R/2}(0) \to \R$.
\end{proof}

Theorem \ref{thm:immersion_sing_sheeting} gives a sheeting theorem for immersions, that are allowed to possess a singular set of locally finite $\mathcal{H}^{n-2}$-measure (or vanishing $2$-capacity), and that are assumed to be ``close'' to a hyperplane. For such immersions, genuine branch points may arise, hence the singular set cannot be ruled out in the conclusions.

\begin{proof}[Proof of Theorem \ref{thm:SchSim}]
Specialising Theorem \ref{thm:immersion_sing_sheeting} to embeddings, that is, if $\iota(M)$ is properly embedded in $C_R \setminus \Sigma$, then the Lipschitz functions $u_j:B^n_{R/2}(0) \setminus \pi(\Sigma) \to \R$ must be such that, for every $j \in \{1, \ldots, Q-1\}$, $u_j < u_{j+1}$. Thanks to the strict inequality, each $u_j$ is a Lipschitz solution of the weak minimal surface PDE on $B^n_{R/2}(0) \setminus \pi(\Sigma)$, hence a smooth strong solution. Simon's well-known singularity removal \cite{Sim77}, which only requires $\pi(\Sigma)$ closed in $B^n_{R/2}(0)$ and $\mathcal{H}^{n-1}(\pi(\Sigma))=0$ (a consequence of $\text{cap}_2(\Sigma)=0$), yields a smooth extension $u_j:B^n_{R/2}(0) \to \R$ for each $j$, so that $\text{sing}_M \cap C_{\frac{R}{2}}=\emptyset$. 
\end{proof}

\begin{oss}
As shown in \cite{SchSim}, Theorem \ref{thm:SchSim} leads rather quickly to the renowned Schoen--Simon regularity and compactness theory for stable minimal embedded hypersurfaces, see \cite[Theorems 2 and 3]{SchSim}.

The extra step required for this is a fairly simple slicing argument, see \cite[pp. 785--787]{SchSim}, which proves that ``closeness'' to a classical cone cannot arise for embeddings; after that, standard tangent cone analysis and dimension reduction complete the proof. For contrast, in the immersed case, closeness to classical cones can arise (and one would naturally aim for a sheeting result, over the several hyperplanes constituting the classical cone, which for $n\leq 6$ and in the absence of singular set follows from Lemma \ref{lem:A_sheeting_2} of Part \ref{partI}).
\end{oss}

With the multi-valued description of $M$ in Theorem \ref{thm:immersion_sing_sheeting}, natural questions are a more precise characterisation of the $q$-valued function obtained (plausibly, one can establish $C^{1,\alpha}$ regularity in the sense of $q$-valued functions), and a finer structure result for the singular set. While we do not pursue this here, we observe:

\begin{cor}[uniqueness of tangent hyperplanes]
\label{cor:uniq_tang}
Let $M$ be as in the beginning of Part \ref{partII}, and let $x \in \overline{M}$ be such that there exists a tangent cone (in the sense of varifolds) to $M$ at $x$ that is a hyperplane with multiplicity. Then that is the unique tangent cone at $x$. 
\end{cor}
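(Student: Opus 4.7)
Translate so that $x=0$ and rotate so the given tangent hyperplane is $P=\{x_{n+1}=0\}$ with multiplicity $q$; by hypothesis, $\Theta(\|M\|, 0) = q$. By definition of tangent cone, there is a sequence $r_j\downarrow 0$ such that $\lambda_{(0,1/r_j)}|M|\to q|P|$ as varifolds in $\R^{n+1}$. The strategy is to apply Theorem~\ref{thm:immersion_sing_sheeting} at the single scale $r_j$ (for one large $j$) and use the resulting Lipschitz graph decomposition to force every tangent cone at $x$ to coincide with $q|P|$. From the monotonicity formula applied to the stationary limit $q|P|$ and the Hausdorff convergence of supports, one obtains $\sup_{\overline{M}\cap C_{2r_j}}|x_{n+1}|=o(r_j)$; combined with the control of tilt-excess by height-excess recalled in Remark~\ref{oss:height_excess}, this also gives $E_M(r_j)\to 0$. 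Hence for $j$ sufficiently large, both hypotheses of Theorem~\ref{thm:immersion_sing_sheeting} hold at radius $R=r_j$.

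Fix such a $j$. Theorem~\ref{thm:immersion_sing_sheeting} yields $\overline{M}\cap C_{r_j/2}=\cup_{k=1}^{q_j}\mathrm{graph}(u_k^{(j)})$ with $u_k^{(j)}\colon B_{r_j/2}\to\R$ Lipschitz of constant $L_j\leq c(n)E_M(r_j)$ and $L_j\to 0$. Since sheets with $u_k^{(j)}(0)\neq 0$ contribute nothing to $\Theta(\|M\|,0)$, exactly $q$ of the functions $u_k^{(j)}$ vanish at the origin; the finitely many remaining sheets lie at a positive distance from $0$, so there is $\sigma_j>0$ such that $\overline{M}\cap C_{\sigma_j}$ equals the union of the graphs of precisely those $q$ sheets through the origin, each satisfying the pointwise bound $|u_k^{(j)}(y)|\leq L_j|y|$.

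Now let $r'_m\downarrow 0$ be an arbitrary sequence and let $C'$ be any subsequential varifold limit of $\lambda_{(0,1/r'_m)}|M|$. With $j$ fixed as above and $m$ large enough that $r'_m<\sigma_j$, the rescaled surface in $C_1$ equals $\cup_{k=1}^{q}\mathrm{graph}(\tilde u_k^{(j,m)})$ with $\tilde u_k^{(j,m)}(\eta)=u_k^{(j)}(r'_m\eta)/r'_m$, and $|\tilde u_k^{(j,m)}(\eta)|\leq L_j|\eta|\leq L_j$. Passing to the varifold limit in $m$, $\mathrm{spt}\|C'\|\cap C_1\subset\{|x_{n+1}|\leq L_j\}$. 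Since $j$ is arbitrarily large and $L_j\to 0$, we conclude $\mathrm{spt}\|C'\|\cap C_1\subset P$. As $C'$ is a stationary integral cone supported in a hyperplane, $C'=m|P|$ for some $m\in\N$; preservation of density under varifold convergence gives $m=\Theta(\|C'\|,0)=\Theta(\|M\|,0)=q$, so $C'=q|P|$, proving uniqueness.

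The main point, and in this sense the obstacle already overcome by Theorem~\ref{thm:immersion_sing_sheeting}, is that the Lipschitz decomposition obtained at a \emph{single} scale $r_j$ — with the sheets $u_k^{(j)}$ defined on the fixed domain $B_{r_j/2}$ — already controls $M$ at every smaller scale through the pointwise inequality $|u_k^{(j)}(y)|\leq L_j|y|$. No iteration of $\eps$-regularity is needed: uniqueness follows purely from a soft compactness argument applied to the rescaled Lipschitz sheets.
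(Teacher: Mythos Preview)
Your proof is correct and follows essentially the same approach as the paper's: both apply the $\eps$-regularity estimate from Theorem~\ref{thm:eps_reg_tilt}/Theorem~\ref{thm:immersion_sing_sheeting} at the scales $r_j$ and use that the resulting sup bound tends to zero to force any tangent cone to be $q|P|$. The paper argues more tersely via the pointwise tilt bound $\sup_{M_\ell\cap C_{1/2}} g_\ell\to 0$ and a contradiction, whereas you pass through the Lipschitz graph decomposition and the pointwise height bound $|u_k^{(j)}(y)|\le L_j|y|$ to pin down the support directly and then invoke density for the multiplicity; the substance is the same.
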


\begin{proof}
We take a blow up that gives rise to a hyperplane with multiplicity, which we assume to be $\{x_{n+1}=0\}$ by rotating coordinates. For the blow up sequence $M_\ell$ (obtained by dilations of $M$) we have $E_{M_\ell}(1) \to 0$ (this follows from the monotonicity formula, using also Remark \ref{oss:height_excess}). Denoting by $g_\ell$ the tilt function on $M_\ell$, using the estimate $\sup_{M_\ell \cap C_{\frac{R}{2}}}g_\ell \leq c(n) E_{M_\ell}(R)$ obtained in Theorem \ref{thm:immersion_sing_sheeting}, it follows that $\sup_{M_\ell \cap C_{\frac{1}{2}}}g_\ell \to 0$. If any other blow up gave rise to a different cone, we would have the existence of $y_\ell \in M_\ell \cap C_{\frac{1}{2}}$ with $y_\ell \to 0$ and $\limsup_{\ell \to \infty} g_\ell(y_\ell) >0$, contradiction.
\end{proof}

If $U=\R^{n+1}$ and the mass growth of $M$ at infinity is Euclidean, then tangents at infinity exist and are cones. The same argument shows:

\begin{cor}[Bernstein-type theorem]
\label{cor:uniq_tang_infinity}
Let $M$ be as in the beginning of Part \ref{partII} with $U=\R^{n+1}$, and assume that one tangent cone to $M$ at infinity (in the sense of varifolds) is a hyperplane with multiplicity. Then $M$ is a union of hyperplanes, parallel to the given tangent. (In particular, the tangent at infinity is unique.)
\end{cor}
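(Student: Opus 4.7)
My strategy is to adapt the proof of Corollary \ref{cor:uniq_tang} to a blow-down sequence in place of a blow-up. After rotating coordinates so that the given tangent hyperplane at infinity is $\{x_{n+1}=0\}$, the hypothesis supplies $R_j\to\infty$ such that the rescalings $M_j := \frac{1}{R_j}M$ converge, as varifolds in $\R^{n+1}$, to $q|\{x_{n+1}=0\}|$ for some $q\in\N$. Each $M_j$ inherits from $M$ the Part \ref{partII} framework: two-sided, properly immersed, stable minimal, away from a singular set of vanishing $2$-capacity.

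Next, I will apply Theorem \ref{thm:eps_reg_tilt} to $M_j$ at an arbitrary fixed scale $R>0$. The key input is smallness of $E_{M_j}(R)$. Varifold convergence of $M_j$ to a multiplicity hyperplane, together with the monotonicity formula for the stationary varifold $|\iota_\sharp S|$ in $\R^{n+1}$ (recalled at the start of Part \ref{partII}), forces Hausdorff convergence of supports on every compact set, so $\sup_{M_j\cap C_{2R}}|x_{n+1}|\to 0$. The height-to-tilt control of Remark \ref{oss:height_excess} then gives $E_{M_j}(R)\to 0$; once $j$ is large enough that $E_{M_j}(R)^2\leq k(n)$, Theorem \ref{thm:eps_reg_tilt} yields
\[\sup_{M_j\cap C_{R/2}} g_{M_j} \leq c(n)\, E_{M_j}(R) \longrightarrow 0.\]

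Since $g$ is invariant under the dilation $z\mapsto z/R_j$, this translates to $\sup_{M\cap C_{RR_j/2}} g\to 0$, and as $R_j\to\infty$ every relatively compact subset of $\R^{n+1}$ is eventually contained in some $C_{RR_j/2}$. Hence $g\equiv 0$ on $M$; continuity of $\nu$ on each connected component of $S$ then pins $\nu\equiv\pm e_{n+1}$ there, so the image of every component lies in an affine hyperplane $\{x_{n+1}=c\}$. Properness of $\iota:S\to\R^{n+1}\setminus\Sigma$, combined with the fact that $\text{cap}_2(\Sigma)=0$ implies $\mathcal{H}^{n-1}(\Sigma)=0$ (so $\Sigma$ cannot disconnect a hyperplane), forces each such component to fill out an entire affine hyperplane. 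Therefore $M$ is a union of hyperplanes parallel to $\{x_{n+1}=0\}$, and the tangent cone at infinity is unique. The one point that I expect to need care is the upgrade from varifold to Hausdorff convergence of supports, which is standard from Euclidean area growth and monotonicity but is precisely what powers the vanishing of the height-excess feeding Theorem \ref{thm:eps_reg_tilt}.
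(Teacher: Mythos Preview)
Your proof is correct and follows essentially the same approach as the paper. The paper applies Theorem \ref{thm:eps_reg_tilt} directly to $M$ at the growing scales $R_\ell$, whereas you first rescale to $M_j=\frac{1}{R_j}M$ and apply the theorem at a fixed scale; by scale invariance of the tilt function and of the tilt-excess these are equivalent, and you also spell out more carefully than the paper does why $g\equiv 0$ forces $M$ to be a union of parallel hyperplanes.
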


\begin{proof}
Assume without loss of generality that a tangent at infinity is the hyperplane $\{x_{n+1}=0\}$ with multiplicity $q\in \N$. Then, by the monotonicity formula and by Remark \ref{oss:height_excess} there exists a sequence $R_\ell \to \infty$ such that $E_M(R_\ell)=\frac{1}{R_\ell^n} \int_{M \cap B_{R_\ell}^{n+1}(0)} |\nabla x_{n+1}|^2 \to 0$ as $\ell \to \infty$ (where $\nu$ is a chosen unit normal to the immersed hypersurface and $\nabla$ denotes the intrinsic gradient). For all sufficiently large $\ell$ we can therefore apply Theorem \ref{thm:eps_reg_tilt} to conclude that $\sup_{M \cap B_{\frac{R_\ell}{2}}^{n+1}(0)} |\nabla x_{n+1}| \leq c(n) E_M(R_\ell)$. For any given $r>0$ (since $B_r^{n+1}(0)  \subset B_{\frac{R_\ell}{2}}(0)$ for all sufficiently large $\ell$) one must thus have $\sup_{M \cap B_r^{n+1}(0) }|\nabla x_{n+1}| =0$. As $r$ is arbitrary, $\nabla x_{n+1} \equiv 0$ on $M$ and the conclusion follows.
\end{proof}

\begin{oss} 
If the multiplicity of the hyperplane is at most $2$, then the two corollaries follow from \cite{WicDG}.  
\end{oss}

\medskip

\textbf{Acknowledgments}. I wish to thank Otis Chodosh and Paul Minter for fruitful and helpful comments on an earlier version of the manuscript.

\appendix
\addcontentsline{toc}{part}{Appendices}

\section{The case $n=3$ of Theorem \ref{thm:eps_reg_A}}
\label{n=3}

While not essential for our arguments, we note explicitly that when $n=3$ a stronger conclusion in Theorem \ref{thm:eps_reg_A} can be obtained from the proof given:
\begin{cor}[$n=3$]
\label{cor:case_n=3}
Let $M$ be a properly immersed two-sided stable minimal hypersurface in $B_{2R}(0)$, with $0 \in M$. There exists an (explicit) increasing continuous function $y:[0,\infty)\to [0,\infty)$ with $y(0)=0$, such that for every $x\in M\cap B_{R/2}(0)$ we have
\[|A|(x)\leq \frac{y\Bigg( \frac{1}{2R} \int_{B_{2R}} |A|^2 \Bigg)}{R}.\]
\end{cor}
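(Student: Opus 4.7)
The plan is to revisit the De Giorgi iteration already carried out in the proof of Theorem \ref{thm:eps_reg_A}, keeping the truncation parameter $d>0$ (so that the conclusion reads $|A|\leq d$ on $M\cap B_{R/2}$) genuinely free, and to exploit the sign of the exponent $\frac{2(n-4)}{n-2}$. Recall that the proof produced, for any $d>0$, the decay relation
\[
S_{\ell+1}^{(n-2)/n}\;\leq\;16n\,C_{MS}\,C^{\ell}\left(\tfrac{1}{R^{2}d^{4/(n-2)}}+d^{2(n-4)/(n-2)}\right)S_{\ell-1}.
\]
For $n=3$ one has $\frac{2(n-4)}{n-2}=-2$, so that both terms in the bracket \emph{decrease} as $d\to\infty$ (in contrast with $n\in\{5,6\}$, where the exponent is positive and $d$ must be kept bounded; this is precisely what forces the smallness hypothesis there). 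Consequently, no matter how large the initial integral is, one can still arrange for the coefficient of the decay relation to be as small as required, by enlarging $d$.

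I would then apply Lemma \ref{lem:analysis1} to the subsequence $T_{j}:=S_{2j-1}$, which for $n=3$ satisfies the clean recursion $T_{j+1}\leq \alpha\,\beta^{j}T_{j}^{3}$ (since $1+\tfrac{2}{n-2}=3$), and obtain the abstract smallness condition
\[
S_{1}\;\leq\;K_{0}\,\bigl(\tfrac{1}{R^{2}d^{4}}+\tfrac{1}{d^{2}}\bigr)^{-3/2}
\]
for an explicit absolute constant $K_{0}$. Using the higher integrability inequality \eqref{eq:higher_integrability}, which for $n=3$ reads $\int_{B_{R}}|A|^{6}\leq (20\,C_{MS})^{3}R^{-6}\bigl(\int_{B_{2R}}|A|^{2}\bigr)^{3}$, the requirement translates into an inequality in terms of the scale-invariant quantity $t:=\frac{1}{2R}\int_{B_{2R}}|A|^{2}$ only, namely
\[
\tfrac{1}{R^{2}d^{4}}+\tfrac{1}{d^{2}}\;\leq\;\tfrac{R^{2}}{K_{1}\,t^{2}}
\]
for an explicit absolute constant $K_{1}$.

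A direct solution of this inequality by splitting into the regimes $d\gtrless 1/R$ produces the explicit threshold $d(t,R)=\Psi(t)/R$ with $\Psi(t)=(2K_{1})^{1/4}\sqrt{t}$ in the regime $t\leq 1/\sqrt{2K_{1}}$ and $\Psi(t)=\sqrt{2K_{1}}\,t$ in the regime $t\geq 1/\sqrt{2K_{1}}$; the two expressions match at the transition, yielding a continuous, increasing $\Psi:[0,\infty)\to[0,\infty)$ with $\Psi(0)=0$. Taking $d=\Psi(t)/R$, the iteration forces $S_{\ell}\to 0$ on $B_{R/2}$, whence $|A|\leq d=\Psi(t)/R$ everywhere on $M\cap B_{R/2}$, and $y:=\Psi$ satisfies the conclusion of the corollary.

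The only subtlety will be bookkeeping the two regimes $d\lessgtr 1/R$ and checking that the bound depends on $R$ only through the scale-invariant quantity $t$ (this is automatic dimensionally, but should be written out cleanly). I do not expect any genuine obstacle: the corollary is essentially a quantitative extraction of what the iteration already delivers, made possible by the favorable (negative) sign of $\tfrac{2(n-4)}{n-2}$ precisely at $n=3$.
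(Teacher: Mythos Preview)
Your proposal is correct and follows essentially the same route as the paper: both exploit that for $n=3$ the exponent $\tfrac{2(n-4)}{n-2}=-2$ is negative, keep $d$ free in the decay relation, and reduce (via \eqref{eq:higher_integrability}) to the scale-invariant condition $K t^{2}\leq \frac{x^{4}}{1+x^{2}}$ with $d=x/R$. The only cosmetic difference is that the paper defines $y$ directly as the inverse of $x\mapsto \frac{x^{4}}{1+x^{2}}$ composed with $a\mapsto Ka^{2}$, while you build an explicit two-regime piecewise $\Psi$; both give the same asymptotics $y(a)\sim\sqrt{a}$ for small $a$ and $y(a)\sim a$ for large $a$.
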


\begin{oss}
The proof gives $y(a) \sim a$ for $a$ large and $y(a) \sim \sqrt{a}$ for $a$ small.
\end{oss}

\begin{proof}
Repeating the proof of Theorem \ref{thm:eps_reg_A} with $n=3$ until the choice of $d$, and noting that $\frac{1}{R^2 d^{\frac{4}{n-2}}} + d^{\frac{2(n-4)}{n-2}} = \frac{1}{R^2 d^{4}} + \frac{1}{d^{2}}$, if we let $d=\frac{x}{R}$ the decay relation becomes
\[S_{\ell+1} \leq  c\Big( \frac{1}{x^4}+\frac{1}{x^2} \Big)^3 R^6 \tilde{C}^\ell S_{\ell-1}^{3},\]
with $\tilde{C}=2^{21}$ and $c=3^3 \cdot 2^{48}$ (using rough estimates, among which $C_{MS}^3\leq 4^{28}$).
A sufficient smallness condition on $S_1$ (to have $S_\ell \to 0$) is then
\[R^3 S_1= R^3 \int_{M\cap B_R} |A|^6 \leq \Big(\frac{x^4}{1+x^2}\Big)^{\frac{3}{2}}\frac{1}{c^{\frac{1}{2}} \tilde{C}^{\frac{3}{2}}}.\]
This is in turn implied\footnote{We use $R^3 \int_{B_R} |A|^6 \leq (40\,C_{MS})^3 \big(\frac{1}{2R} \int_{B_{2R}\cap M}|A|^2 \big)^3$, obtained in \eqref{eq:higher_integrability}.}, writing $K = c^{\frac{1}{3}}\tilde{C} (40\,C_{MS})^2$, by
\[ K  \Bigg( \frac{1}{2R} \int_{M\cap B_{2R}} |A|^2 \Bigg)^2 \leq \frac{x^4}{1+x^2}.\]
As $\frac{x^4}{1+x^2}$ is monotonically (strictly) increasing with value $0$ at $0$, we let $f$ denote its inverse function and set $y(a)=f(K  a^2)$. Then by choosing $d=\frac{y(a)}{R}$, with $a= \frac{1}{2R} \int_{B_{2R}} |A|^2$, we find $|A|\leq \frac{y(a)}{R}$ on $B_{\frac{R}{2}}$.
\end{proof}

\begin{oss}
\label{oss:n=3}
In other words, for $n=3$ curvature estimates of Theorem \ref{thm:curv_est_6} completely follow from Corollary \ref{cor:case_n=3} (without appealing to tangent cone analysis and dimension reducing). Indeed, $\frac{1}{(2R)^{n-2}} \int_{B_{2R}}|A|^2 \leq \omega_n 2^n \Lambda$ (by the stability inequality in $B_{4R}$), hence $|A|(x)\leq \frac{y(8 \omega_3 \Lambda)}{R}$ for every $x\in B_{\frac{R}{2}}$. 
More precisely, as $y$ above is explicit, for $n=3$ we find, 
in Theorem \ref{thm:curv_est_6}, $\beta = \sqrt{\frac{\sigma+\sqrt{\sigma^2+4\sigma}}{2}}$ with $\sigma=K(8 \omega_3 \Lambda)^2$ and $K$ as above. 

We remark that (always for $n=3$) \cite{ChoLi2} establishes the existence of a constant, explicitly determinable in terms of the first Betti number of $M$ and of the number of boundary components of $M$, that bounds the curvature in $B_{\frac{R}{2}}$ of Theorem \ref{thm:curv_est_6} .
\end{oss}

\section{An elementary lemma}

\begin{lem}
 \label{lem:analysis1}
Let $\tilde{C}, C>0, \alpha >0$ be given constants, and let $x_\ell$ be a sequence of positive real numbers that satisfies the following recursive relation for all $\ell\in \N \setminus \{0\}$:
\[x_{\ell+1} \leq \tilde{C}\, C^\ell \, x_\ell^{1+\alpha}.\]
Assume that $x_1 \leq \frac{1}{(\tilde{C} \, C^{1+\frac{1}{\alpha}})^{\frac{1}{\alpha}}}$ if $C>1$, $x_1 < \frac{1}{\tilde{C}^{\frac{1}{\alpha}}}$ if $C\leq 1$. Then $x_\ell \to 0$ as $\ell \to \infty$.
\end{lem}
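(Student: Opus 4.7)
The plan is to try a geometric ansatz of the form $x_\ell \leq K r^\ell$ for carefully chosen constants $K>0$ and $r \in (0,1)$, and verify it by induction on $\ell$. Substituting into the recursion yields
\[x_{\ell+1} \leq \tilde{C}\, C^\ell (K r^\ell)^{1+\alpha} = \tilde{C}\, K^{\alpha} (Cr^\alpha)^\ell \cdot K r^\ell,\]
so the inductive step $x_{\ell+1} \leq K r^{\ell+1}$ will follow provided $\tilde{C}\, K^{\alpha} (Cr^\alpha)^\ell \leq r$ for every $\ell\geq 1$. The crux is therefore to choose $r$ so that the factor $(Cr^\alpha)^\ell$ does not grow in $\ell$, and then to choose $K$ so that both $\tilde{C} K^\alpha \leq r$ and the base case $x_1 \leq K r$ are implied by the hypothesis.

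For the case $C > 1$, I would set $r = C^{-1/\alpha} \in (0,1)$ so that $Cr^\alpha = 1$ and the inductive condition collapses to $\tilde{C} K^\alpha \leq r$. Saturating this with $K = (r/\tilde{C})^{1/\alpha}$, a direct computation gives
\[K r = \frac{r^{1+1/\alpha}}{\tilde{C}^{1/\alpha}} = \frac{1}{(\tilde{C}\, C^{1+1/\alpha})^{1/\alpha}},\]
which matches precisely the bound on $x_1$ in the hypothesis. So the base case $x_1 \leq Kr$ holds, and the induction delivers $x_\ell \leq K r^\ell \to 0$.

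For the case $C \leq 1$, I would simply use $C^\ell \leq 1$ for all $\ell \geq 1$, so that the recursion reduces to $x_{\ell+1} \leq \tilde{C}\, x_\ell^{1+\alpha}$. Setting $\lambda := \tilde{C}\, x_1^\alpha$, the strict hypothesis $x_1 < \tilde{C}^{-1/\alpha}$ gives $\lambda<1$, and a one-line induction shows $x_\ell \leq \lambda^{\ell-1} x_1$; indeed, assuming the bound at stage $\ell$, one has $x_{\ell+1} \leq \tilde{C}\lambda^{(\ell-1)(1+\alpha)} x_1^{1+\alpha} = \lambda^{1+(\ell-1)(1+\alpha)} x_1 \leq \lambda^\ell x_1$ since $(\ell-1)\alpha \geq 0$. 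Hence $x_\ell \to 0$ in this regime as well. The argument is a routine induction with no real obstacle: the only care needed is bookkeeping of exponents in the $C>1$ case to recover the exact threshold $(\tilde{C} C^{1+1/\alpha})^{-1/\alpha}$ stated in the hypothesis.
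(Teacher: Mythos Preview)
Your proof is correct and is essentially the same as the paper's. In the case $C>1$ the paper proves by induction that $\tilde{C}\,C^\ell x_\ell^\alpha \leq a$ with $a=C^{-1/\alpha}$, giving $x_{\ell+1}\leq a\,x_\ell$; your ansatz $x_\ell \leq K r^\ell$ with $r=C^{-1/\alpha}$ is a direct repackaging of this same geometric decay and recovers the identical threshold, while for $C\leq 1$ both arguments drop the $C^\ell$ factor and exploit $\tilde{C}x_1^\alpha<1$ in the same way.
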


\begin{proof}
Assume that $C>1$. We show that there exists $a\in(0,1)$ such that $\tilde{C}\, C^\ell \, x_\ell^{\alpha} \leq a$ for all $\ell\in \N$, from which $x_{\ell+1} \leq a x_\ell$ follows (hence the conclusion). For $\ell =1$ we have
\[\tilde{C} C x_1^{\alpha} \leq  \frac{C\, \tilde{C}}{\tilde{C}\, C^{1+\frac{1}{\alpha}}} =  \frac{1}{C^{\frac{1}{\alpha}}}\]
and we set $a= \frac{1}{C^{\frac{1}{\alpha}}}$. Now we check inductively, for arbitrary $(\ell+1)\geq 2$, that
\[\tilde{C}\, C^{\ell+1} \, x_{\ell+1}^{\alpha} \leq \tilde{C}\, C^{\ell+1} (\tilde{C}\, C^\ell \, x_\ell^{1+\alpha})^{\alpha}= C (\tilde{C} C^\ell x_\ell^\alpha)^{1+\alpha} \leq C a^{1+\alpha}=\frac{C}{C^{\frac{1+\alpha}{\alpha}}}=a.\]
 
If $C\leq 1$ then the recursive relation implies $x_{\ell+1} \leq \tilde{C}\, x_\ell^{1+\alpha}\leq (\tilde{C}\, x_\ell^{\alpha}) x_\ell$, in which case the smallness assumption $x_1 < \frac{1}{\tilde{C}^{\frac{1}{\alpha}}}$ implies the conclusion.
\end{proof}

 \small

\medskip

\medskip

\noindent UNIVERSITY COLLEGE LONDON

\noindent Gower Street, London, WC1E 6BT, United Kingdom

\noindent email: c.bellettini@ucl.ac.uk


\begin{thebibliography}{99}
\bibitem{Bre} S.~Brendle, The isoperimetric inequality for a minimal submanifold in Euclidean space. \emph{J. Amer. Math. Soc.} 34 (2021), no.~2, 595--603.
\bibitem{CaMaRo} G.~Catino, P.~Mastrolia, A.~Roncoroni, Two rigidity results for stable minimal hypersurfaces. \emph{Geom. Funct. Anal.} 34 (2024) no.~1, 1--18.
\bibitem{ChoLi} O.~Chodosh, C.~Li, Stable minimal hypersurfaces in $\R^4$. \emph{Acta Math.} 233 (2024) no.~1, 1--31.
\bibitem{ChoLi2} O.~Chodosh, C.~Li, Stable anisotropic minimal hypersurfaces in $\R^4$. \emph{Forum Math. Pi}, 11, e3 (2023).
\bibitem{CLMS} O.~Chodosh, C.~Li, P.~Minter, D.~Stryker, Stable minimal hypersurfaces in $\R^5$. arXiv:2401.01492 (2024).
\bibitem{DG} E.~De Giorgi, Sulla differenziabilit\`{a} e l'analiticit\`{a} delle estremali degli integrali multipli regolari. \emph{Mem. Acc. Sci. Torino, Classe Sci. Fis.
Mat. Nat.} 3 (1957), no~3, 25--43.
\bibitem{dCP} M.~do Carmo, C.~K.~Peng, Stable complete minimal surfaces in $\R^3$ are planes. \emph{Bull. Amer. Math. Soc. (N.S.)} 1 (1979), no.~6, 903--906.
\bibitem{Evans_book} L.~C.~Evans, R.~F.~Gariepy, \emph{Measure theory and fine properties of functions}. Stud. Adv. Math. CRC Press, Boca Raton, FL, 1992. viii+268 pp.
\bibitem{FCSc} D.~Fischer-Colbrie, R.~Schoen, The structure of complete stable minimal surfaces in 3-manifolds of nonnegative scalar curvature. \emph{Comm. Pure Appl. Math.} 33 (1980), no.~2, 199--211.
\bibitem{HS} R.~Hardt, L.~Simon, Area minimizing hypersurfaces with isolated singularities. \emph{Journal f\"ur die reine und angewandte Mathematik}, 362 (1985), 102--129.
\bibitem{HLW} H.~Hong, H.~Li, G.~Wang, On $\delta$-stable minimal hypersurfaces in $\R^{n+1}$, arXiv:2407.03222 (2024).
\bibitem{Maz} L.~Mazet, Stable minimal hypersurfaces in $\R^6$. arXiv:2405.14676 (2024).
\bibitem{MS} J.~H.~Michael, L.~M.~Simon, Sobolev and mean-value inequalities on generalized submanifolds of $\R^n$. \emph{Comm. Pure Appl. Math.} 26 (1973), 361--379.
\bibitem{Pog} A.~V.~Pogorelov, On the stability of minimal surfaces. \emph{Soviet Math. Dokl.} 24 (1981), 274--276.
\bibitem{Sch} R.~Schoen, \emph{Existence and regularity theorems for some geometric variational problems}. Thesis, Stanford University, 1978.
\bibitem{SchSim} R.~Schoen, L.~Simon, Regularity of stable minimal hypersurfaces. \emph{Comm. Pure Appl. Math.} 34 (1981), 741--797.
\bibitem{SSY} R.~Schoen, L.~Simon, S.-T.~Yau, Curvature estimates for minimal hypersurfaces. \emph{Acta Math.} 134 (1975), 275--288.
\bibitem{Sim77} L.~Simon, On a theorem of De Giorgi and Stampacchia. \emph{Math. Zeit.} 155 (1977) 199-204. 
\bibitem{SimonNotes} L.~Simon, \emph{Lectures on Geometric Measure Theory}. Proceedings of the Centre for Mathematical Analysis 3, Canberra, (1984), VII+272.
\bibitem{Simons} J.~Simons, Minimal varieties in Riemannian manifolds. \emph{Ann. of Math.} 88 (1968), 62--10.
\bibitem{Tysk} J.~Tysk, Finiteness of index and total scalar curvature for minimal hypersurfaces. \emph{Proc. Amer. Math. Soc.} 105 (1989), no.~2, 429--435.
\bibitem{WicDG} N.~Wickramasekera, A regularity and compactness theory for immersed stable minimal hypersurfaces of multiplicity at most 2. \emph{J. Differential Geom.} 80 (2008), no.~1, 79--173.
\end{thebibliography}
\end{document}